\def\Xint#1{\mathchoice
{\XXint\displaystyle\textstyle{#1}}%
{\XXint\textstyle\scriptstyle{#1}}%
{\XXint\scriptstyle\scriptscriptstyle{#1}}%
{\XXint\scriptscriptstyle\scriptscriptstyle{#1}}%
\!\int}
\def\XXint#1#2#3{{\setbox0=\hbox{$#1{#2#3}{\int}$ }
\vcenter{\hbox{$#2#3$ }}\kern-.6\wd0}}
\def\dashint{\Xint-}
\newcommand{\be}{\begin{equation} \label}
\newcommand{\ee}{\end{equation}}
\newcommand{\bea}{\begin{eqnarray}\label}
\newcommand{\eea}{\end{eqnarray}}
\newcommand{\bas}{\begin{eqnarray*}}
\newcommand{\eas}{\end{eqnarray*}}
\newcommand{\bit}{\begin{itemize}}
\newcommand{\eit}{\end{itemize}}
\newcommand{\Om}{\Omega}
\newcommand{\lbal}{\left\{ \begin{array}{l}}
\newcommand{\lball}{\left\{ \begin{array}{ll}}
\newcommand{\ear}{\end{array} \right.}
\begin{document}
\title{Quantitative analysis and its applications for Keller-Segel type systems
\thanks{Supported by the National Natural
 Science Foundation of China (Nos. 12071098 and 12301245), Postdoctoral Science Foundation of Heilongjiang Province (No. LBH-Z22177) and the Young talents sponsorship program of Heilongjiang Province (No. 2023QNTJ004)}}
\author{Mengyao Ding$^{1}$,~~Yuzhou Fang$^{2}$,~~Chao Zhang$^{2,1}$\thanks{Corresponding author.
E-Mail:
mengyaod@126.com (M. Ding),
18b912036@hit.edu.cn (Y. Fang),
 czhangmath@hit.edu.cn (C. Zhang)}\\
 {\small $^{1}$ Institute for Advanced Study in Mathematics, Harbin Institute of Technology, Harbin 150001, PR China}\\
{\small $^{2}$ School of Mathematics, Harbin Institute of Technology, Harbin 150001, PR China}\\
}
\date{}
\newtheorem{theorem}{Theorem}
\newtheorem{definition}{Definition}[section]
\newtheorem{lemma}{Lemma}[section]
\newtheorem{proposition}{Proposition}[section]
\newtheorem{corollary}{Corollary}\newtheorem{remark}{Remark}
\renewcommand{\theequation}{\thesection.\arabic{equation}}
\catcode`@=11 \@addtoreset{equation}{section} \catcode`@=12
\maketitle{}

\begin{abstract}
In this paper, we utilize the De Giorgi iteration to quantitatively analyze the upper bound of solutions
for Keller-Segel type systems. The refined upper bound estimate presented here has broad applications in
determining large time behaviours of weak solutions and improving the regularity for models involving the $p$-Laplace operator.
To demonstrate the applicability of our findings, we investigate
the asymptotic stability of a chemotaxis model with nonlinear signal production and a chemotaxis-Navier-Stokes model with a logistic source. Additionally, within the context of $p$-Laplacian diffusion, we establish H\"{o}lder continuity for a chemotaxis-haptotaxis model and a chemotaxis-Stokes model.

\begin{description}
\item[2020MSC:] 35B45; 35B40; 35K55; 92C17
\item[Keywords:]Keller-Segel systems; De Giorgi iteration; Boundedness; Asymptotic stability
\end{description}
\end{abstract}
\section{Introduction}\label{INTR}
\setcounter{section}{1}\setcounter{equation}{0}

Chemotaxis is the phenomenon in which cells, organisms, or entities move in response to chemical stimulus.
Specially, the movements of single-cell or multicellular organisms are directed
by the concentration gradient of certain chemicals in their environment.
The pioneering modeling works, describing this biochemotactic phenomenon,
are introduced by Keller-Segel \cite{KS1970} and \cite{KS1971}.
Afterwards, given the crucial role of chemotaxis in the fields of medicine and biology,
an abundance of mathematical research on classical Keller-Segel systems
and plenty of related variants have been carried out extensively, see \cite{BBTW2015, Horstmann, TW} for review literature.

In this work, we consider the following chemotaxis system of a general form: 
 \begin{eqnarray}\label{q1}
 \begin{cases}
    \begin{array}{llll}
       \displaystyle n_t+\tau\mathbf{u}\cdot\nabla n=\nabla\cdot\big(a(\nabla n, n,x,t)\nabla n-b(n,x,t)\nabla c\big)+f(n,x,t), &x\in\Omega,\,t>0,\\
        \displaystyle  c_{t}+\tau\mathbf{u}\cdot\nabla c=\Delta c -c+g(n,x,t), &x\in\Omega,\,t>0,\\
        \displaystyle a(\nabla n, n,x,t)\frac{\partial n}{\partial \nu}=\frac{\partial c}{\partial \nu}=0, &x\in\partial\Omega,\,t>0,\\
        \displaystyle (n,c)|_{t=0}=(n_0,c_0),&x\in\Omega,\\
    \end{array}
 \end{cases}
\end{eqnarray}
where $\tau\in\{0,1\}$, $\Omega\subseteq \Bbb{R}^N$ ($N\ge2$) is a bounded domain with smooth boundary and $\partial/\partial \nu$ denotes the derivative with respect to
the outer normal of $\partial\Omega$.
The density of organisms such as cells and bacteria is represented by $n$; the concentration of chemical signal is represented by $c$.
In the present work, we also consider scenarios where the environmental fluid, denoted by $\mathbf{u}$, needs to be included in the model.
The component $\mathbf{u}$ with the property $\nabla\cdot\mathbf{u} =0$ in the distributional sense, 
satisfies (Navier-) Stokes equations involving an external
gravitational forces generated by the aggregation of organisms.
Here, we suppose that $\mathbf{u}$ is a given vector function admitting a certain regularity property.
The terms $\nabla\cdot(a(\nabla n, n, x,t)\nabla n)$ and $\nabla\cdot(b( n, x,t)\nabla c)$
exhibit the (nonlinear) self-diffusion and chemotactic cross-diffusion mechanisms, respectively.
The cells-kinetics mechanism is characterized by the logistic function $f(n,x,t)$.
The production or consumption mechanism of $n$ with respect to
$c$ is exhibited by the function $g(n,x,t)$.

Since there are various biological mechanisms in experimental environments or real-world implementations,
the nonlinear terms included in the modeled chemotaxis systems will be presented in diverse forms.
We first introduce the possible choices of $a$ and $b$.
The approach in \cite{hillen_painter2002} captures one prominent situation that finite volumes of individuals cannot be neglected. In a particular framework developed in \cite{hillen_painter2002},
the nonlinear terms $a$ and $b$ are connected through the relationship
\begin{align}
a(\xi, s, x,t)=Q(s)-s Q^{\prime}(s) \qquad \mbox{and} \qquad b( s, x,t)=s Q(s), \qquad\xi\in\Bbb{R}^N,~ s \geq 0,~x\in\overline{\Omega},~t>0,
\end{align}
where $Q(s)$ with $s=n(x,t)$ represents the density-dependent probability for a cell to find space somewhere
in its current neighborhood.
The above expressions also imply that $a$ and $b$ can be widely selected through kinds of ways and
a very prototypical choice is determined as follows,
\begin{align}\label{volume}
 	a(\xi, s, x,t)=(s+1)^{\alpha}
	\qquad \mbox{and} \qquad
	b(s, x,t)=s(s+1)^{\beta-1},
	 \qquad\xi\in\Bbb{R}^N,~ s \geq 0,~x\in\overline{\Omega},~t>0.
\end{align}
The solvability results concerning the chemotaxis model with nonlinear terms like \eqref{volume} can be found in \cite{SS2006, WD2010, TW2012}.
Apart from the above volume filling effects, the $p$-Laplacian as a regularity operator originating from non-Newtonian mechanics \cite{DT1994},
nonlinear flow laws \cite{GR2003} and other physical phenomena, 
also plays an important role in system modeling.
When it comes to the $p$-Laplacian, the nonlinear diffusion function $a$ reads as
$$a(\xi, s, x,t)= |\xi|^{p-2}, \qquad\xi\in\Bbb{R}^N,~ s \geq 0,~x\in\overline{\Omega},~t>0,$$
and under this setting, the existence of weak solutions to systems has been discussed in \cite{TL2019}.

Finally, let us turn to review some typical choices of $f$ and $g$.
Taking into account disciplines like population dynamics, it is necessary to incorporate a proper logistic function into the chemotaxis model
to characterize the proliferation and death of organisms.
The specific form of $f$ depends on practical factors such as environmental capacity and growth rate.
The logistic damping effects generated by $f$ under the most prototypical choice $f(s)=r s-\mu s^2$ ($r \in \mathbb{R}$ and $\mu>0$) have
been investigated by \cite{W2014,W2010}.
For the findings concerned with source functions of generalized types,
we refer the readers to e.g., \cite{WMX2021,Wang2020}.

When the chemical signal is produced by cells, the most common selection of the function $g$ is given by
$$g(s, x,t)=s, \qquad  s \geq 0,~x\in\overline{\Omega},~t>0,$$
which indicates that the rate of production of the signal is directly proportional to the quantity of substance.
Accounting for saturation
effects at large densities, the process of signal production through cells no longer needs to depend
on the population density in a linear manner, and then the prototype in this case is determined by the choice
$$g(s, x,t)=s^\sigma, \qquad s \geq 0,~x\in\overline{\Omega},~t>0.$$
An another important selection of $g$ was originally exhibited in \cite{KS1971}, 
where the chemotaxis model was built on the phenomenon that the chemical signal is a kind of nutrient consumed by the organisms.
Under this mechanism, a typical choice of $g$ takes the form that
$$g(s,x,t)=-s^{\sigma}c(x,t)+c(x,t), \qquad s \geq 0,~x\in\overline{\Omega},~t>0,$$
where $\sigma$ measures the consumption rate between the signal and cells.
In the field of PDE research,  the vast majority of studies on the chemotaxis model of
type \eqref{q1} focus on addressing solvability or demonstrating the existence of blow-up phenomena.
The solvability results, along with the regularity inferred from the space related to solution concepts, 
are achieved by investigating the energy development of local solutions
in the most existing literature, see more details in e.g. \cite{BBTW2015, Horstmann}.\\[8pt]
{\bf Motivation and ideas of the present work.}
\emph{Different from the approach taken in manuscripts addressing solvability, here we mainly adopt the De Giorgi-Nash theory instead of energy methods to analyze the properties of solutions.
In other words, our emphasis is on estimating the size of the level set rather than studying the development of  $\int_{\Omega} n^q(\cdot, t)\,dx$.
An advantage of employing the De Giorgi iteration is its ability to intuitively exhibit a delicate quantitative estimate on the upper bound of solutions.}

As demonstrated above, the diverse mechanisms in the real world lead to varied choices of nonlinear terms in the model.
To encompass a wider range of scenarios,
 we only impose the mild assumptions \eqref{Sa}--\eqref{Sf} on each nonlinear term appearing in \eqref{q1}.
Therefore, our boundedness results, as shown in Theorems \ref{th1}\&\ref{th2},
can provide quantitative analysis on upper bounds for global solutions obtained in many existing literature.
This idea also can be found in \cite{W20241, W20242}, where the authors captured the properties for parabolic
equations of a general form and further utilized them to investigate the chemotaxis models.
\\[8pt]
\noindent{\bf Hypotheses.}
In this study, we investigate the properties of solutions defined on $\overline{\Omega}\times[0, T)$, where $T\in(0,\infty)$ or $T=\infty$.
We suppose that
\begin{align}\label{Sa}
a(\xi, s, x,t)\ge a_0 (s+1)^{\alpha}|\xi|^{p-2},  \qquad\xi\in\Bbb{R}^N,~ s \geq 0,~x\in\overline{\Omega},~t>0
\end{align}
and
\begin{align}\label{Sb}
b(s, x,t)\le b_0 (s+1)^{\beta}\quad \text{ as well as}\quad b(0,x,t)=0,  \qquad\xi\in\Bbb{R}^N,~ s \geq 0,~x\in\overline{\Omega},~t>0
\end{align}
with $a_0>0$, $b_0\ge0$, $\alpha,\beta\in \Bbb{R}$ and $p>1$.
For any fixed $(x,t)\in\overline{\Omega}\times(0,T)$,
one-variable functions $f(\cdot,x,t)$ and $g(\cdot,x,t)$ belong to $ C^1([0,\infty))$.
Moreover, $f$ also satisfies that
\begin{align}\label{Sf0}
f(0, x, t)\ge0 \quad  \text{for all}\  \  (x, t)\in\overline{\Omega}\times(0, T)
\end{align}
and
\begin{align}\label{Sf}
  f(s,x,t)\rightarrow -\infty \quad \textmd{as } s\rightarrow \infty
\end{align}
uniformly in $\overline{\Omega}\times(0,T)$.
The condition \eqref{Sf} guarantees the existence of $K_f>1$ such that
\begin{align}\label{Kf}
f(s,x,t)\le 0\qquad {\rm for~any}~(x,t)\in\overline{\Omega}\times(0,T),
\end{align}
as long as $s\ge K_f$.
The velocity $\mathbf{u}$ is assumed to satisfy that
\begin{align}\label{mu}
\mathbf{u} \in L_{loc}^\infty(\overline{\Omega}\times[0,T))\qquad {\rm for }~ \tau=1.
\end{align}

\noindent{\bf Notations.}
To simplify presentations, we need to keep the coming notations in mind.
For any $\alpha\in\mathbb{R}$, the symbols $\alpha_-$ and $\alpha_+$ denote
$$\alpha_-:=\max\{-\alpha\, ,\, 0\},\quad\alpha_+:=\max\{\alpha\, ,\, 0\}.$$
As usual, we employ the domain notation $Q_T:=\overline{\Omega}\times(0, T)$.
In the following, the symbol $\frac{1}{\alpha_+}$ will appear several times,
which means 
\begin{align}
	\frac{1}{\alpha_+}:= \left\{ \begin{array}{ll}
	+\infty
	\qquad & \mbox{if } \alpha\le 0, \\[1mm]
	\frac{1}{\alpha}
	\qquad & \mbox{if } \alpha>0.
	\end{array} \right.
\end{align}

For any $k\in\Bbb{R}$ and $\tau\in(0,T)$, we define the level set
\begin{equation}\label{A}
A^{+}(k,\tau):=\left\{x \in \Omega\, \big| \,n(x, \tau)>k\right\}.
\end{equation}
For $h \in L^{1}(V)$, the mean average of $h$ is given by
$$
(h)_{V} :=\dashint_{V} h(x) \,d x=\frac{1}{|V|} \int_{V} h(x) \,dx.
$$ 
The presence of the fluid term $\mathbf{u}$ necessitates the introduction of the solenoidal subspace of $L^2(\Omega;\mathbb{R}^2)$ as follows,
$$L_{\sigma}^{2}(\Omega):=\big\{\mathbf{v}\in L^{2}(\Omega;\mathbb{R}^2) \mid \nabla \cdot \mathbf{v}=0\text { in the distributional sense} \big\}.$$

Throughout this paper, we adopt the shorthand notation
$$\verb"data":= (p, \alpha, \beta, a_0, b_0, N, \Omega).$$
Hereafter, $C\equiv C(\verb"data")$ represents a pure constant depending only on a subset of $\{p, \alpha, \beta, a_0, b_0, N, \Omega\}$, unless otherwise specified.

Before stating our main contributions, we introduce the concept of solutions in this paper.

\begin{definition}\label{def}
Let $T\in(0,\infty]$ and $\Omega\subseteq\mathbb{R}^N$ $(N\ge 2)$ be a bounded domain with smooth boundary.
Assume $a,b,f,g$ satisfy \eqref{Sa}--\eqref{Sf} and
$\mathbf{u}$ belongs to $L_{loc}^\infty(\overline{\Omega}\times[0,T))$
with the property that $\nabla\cdot\mathbf{ u}=0$ in the distributional sense.
We call a pair $(n,c)$ with $n\ge0$ in $Q_T$ a global weak solution of \eqref{q1} if
\begin{equation}
\begin{cases}
n \in L_{loc}^{p}\big((0,T) ; W^{1,p}(\Omega)\big) \cap C^{0}\big((0,T);L^{2+\alpha_-}(\Omega)\big),\\
c\in L_{l o c}^{2}\big((0,T) ; W^{1,2}(\Omega) \big)
\end{cases}
\end{equation}
such that
\begin{align}\label{da}
a\big(\nabla n(x,t), n(x,t), x,t\big)\big(1+n(x,t)\big)^{\alpha_-}|\nabla n(x,t)|
\in L_{l o c}^{\frac{p}{p-1}}\big(\overline{\Omega}\times(0,T)\big),
\end{align}
\begin{align}\label{db}
b\big(n(x,t),x,t\big)\big(1+n(x,t)\big)^{\alpha_-} |\nabla c(x,t)|
\in L_{l o c}^{\frac{p}{p-1}}\big(\overline{\Omega}\times(0,T)\big)
\end{align}
as well as
\begin{align}\label{df}
g(n(x,t),x,t)\in L_{l o c}^1\big(\overline{\Omega}\times(0,T)\big),\qquad f(n(x,t),x,t)\big(1+n(x,t)\big)^{1-\alpha_-}\in L_{l o c}^1\big(\overline{\Omega}\times(0,T)\big),
\end{align}
and that
\begin{equation}\label{De1}
\begin{split}
-\int_0^{\infty} \int_{\Omega}n \varphi_t\,dxdt
&=  \int_0^{\infty} \int_{\Omega}n \mathbf{u} \cdot \nabla \varphi\,dxdt
- \int_{0}^{\infty}\int_{\Omega} a(\nabla n, n, x,t)\nabla n \cdot \nabla\varphi\,dxdt \\
&\ \ \ + \int_0^{\infty}  \int_{\Omega}  b(n,x,t) \nabla c\cdot \nabla \varphi\,dxdt +  \int_0^{\infty} \int_{\Omega}f(n,x,t) \varphi\,dxdt
 \end{split}
\end{equation}
 and
\begin{equation}\label{De2}
-\int_0^\infty\int_{\Omega}c\psi_t\,dxdt
=-\int_0^\infty\int_{\Omega}\nabla c\cdot\nabla\psi-\int_0^\infty\int_{\Omega}c\psi\,dxdt
+\int_0^\infty\int_{\Omega}c\mathbf{u}\cdot\nabla \psi\,dxdt
+\int_0^\infty\int_{\Omega}g(n,x,t)\psi\,dxdt
\end{equation}
hold for any $\varphi\in C^{\infty}_0\big(\overline{\Omega}\times(0,T)\big)$ and $\psi\in C^{\infty}_0\big(\overline{\Omega}\times(0,T)\big)$.
\end{definition}

As stated in \cite[page 76]{D1993}, when considering a regularity operator of the type
$$
a(n,\nabla n)\simeq n^{\alpha}|\nabla n|^{p-2},
$$
it is crucial to emphasize the solution $n$ admits a property like \eqref{da} so that the integral term involving $a$ in \eqref{De1} makes sense, and subsequent choices of testing functions are feasible. Similarly, necessities of the requiring \eqref{db} and \eqref{df} arise from the same reason. The concept of weak solutions of this kind is very common in chemotaxis models, see e.g., \cite{W2015, TL2019, TW2007}.

Now we are in a position to state our main results.

\begin{theorem} [\label{th1}Boundedness result]
Let $T\in(0,\infty]$ and $\Omega\subseteq\mathbb{R}^N$ $(N\ge 2)$ be a bounded domain with smooth boundary.
Let a pair of functions $(n,c)$ with $n\ge 0$ in $Q_T$ 
be a weak solution to \eqref{q1} in the sense of Definition \ref{def},
where \eqref{Sa}--\eqref{Sf} are in force with
\begin{equation}\label{thm1-1}
\max\left\{ \frac{  p(\beta+\alpha_-) }{p-1}\,,\, 2+\alpha_-\right\}< \frac{p(N+2+\alpha_-)}{N}.
\end{equation}
Then for all $t_0\in(0,T)$ and any $\hat{t}\in(0,t_0) $, if there holds that
\begin{align}\label{tmathb}
\mathfrak{b}:=\max\Big\{\|\nabla c\|_{L^{ \infty}(\Omega\times(t_0-\hat{t},t_0))  }\, , \,1\Big\}<\infty,
\end{align}
we find that $n$ is bounded in $\Omega\times(t_0-\hat{t}/2,t_0)$.
Furthermore, the upper bound of $n$ is given as
\begin{align}\label{th11}
\underset{\Omega\times(t_0-\hat{t}/2,t_0)}
{\rm ess~ sup} n
\leq C  \mathfrak{b}^{ \kappa  }\left(\hat{t}+\frac{1}{  \hat{t}^{\frac{N}{p}}  } \right)^{    \hat{\kappa}    }
\left(\dashint_{t_0-\hat{t}}^{t_0}\int_{\Omega }n^{     \frac{p(N+2+\alpha_-)}{N}      }\,dxd\tau\right)^{    \hat{\kappa}       }+K_f,
\end{align}
where $K_f$ is determined in \eqref{Kf}, the positive pure constants $\kappa$, $\hat{\kappa}$,
 and $C$ depend only on \verb"data".
\end{theorem}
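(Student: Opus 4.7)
The strategy is a De~Giorgi iteration on the truncations $(n-k_j)_+$, in the spirit of the approach announced in the Motivation section and used in \cite{W20241,W20242}. Fix $t_0\in(0,T)$ and $\hat t\in(0,t_0)$; for a level $k\ge K_f$ to be determined at the end, introduce the increasing sequence $k_j:=k(2-2^{-j})\uparrow 2k$, the shrinking cylinders $Q_j:=\Omega\times(t_0-\hat t_j,t_0)$ with $\hat t_j:=\hat t/2+\hat t/2^{j+1}\downarrow\hat t/2$, and smooth time cutoffs $\zeta_j$ supported in $(t_0-\hat t_j,t_0]$, identically $1$ on $[t_0-\hat t_{j+1},t_0]$, and satisfying $|\zeta_j'|\le C\,2^{j}/\hat t$. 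The goal is a recursion
\[
Y_{j+1}\le C\,\mathfrak b^{\sigma_1}\bigl(1+\hat t^{-\sigma_2}\bigr)B^{j}k^{-\lambda q_\star}\,Y_j^{1+\lambda},\qquad Y_j:=\iint_{Q_j}(n-k_j)_+^{q_\star}\,dx\,d\tau,
\]
with $q_\star:=p(N+2+\alpha_-)/N$ and some $\lambda>0$ depending only on $\verb"data"$. The standard geometric-convergence lemma then forces $Y_j\to 0$, i.e.\ $n\le 2k$ a.e.\ on $\Omega\times(t_0-\hat t/2,t_0)$, provided $k$ exceeds an explicit threshold which turns out to be precisely the right-hand side of \eqref{th11}.

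\textbf{Step 1: energy estimate.} Test \eqref{De1} on $Q_j$ with $\varphi=(1+n)^{\alpha_-}(n-k_j)_+\zeta_j^{p}$ (admissible after Steklov averaging in time thanks to \eqref{da}--\eqref{df}). The parabolic term yields $\sup_\tau\int_\Omega\Phi_j(n)\zeta_j^{p}$, where $\Phi_j$ is the primitive of $s\mapsto(1+s)^{\alpha_-}(s-k_j)_+$, comparable to $(n-k_j)_+^{2+\alpha_-}$ on $\{n>k_j\}$, minus the error $C\,2^{j}\hat t^{-1}\iint_{Q_j}(n-k_j)_+^{2+\alpha_-}$. The convection term rewrites, via $\nabla\cdot\mathbf u=0$, the Neumann-type conditions, and the fact that $\nabla\zeta_j^{p}=0$, as $\iint\mathbf u\cdot\nabla H_j(n)\,\zeta_j^{p}$ with a suitable primitive $H_j$, and thus vanishes. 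The diffusion term is bounded below by \eqref{Sa} by a multiple of $\iint_{\{n>k_j\}}|\nabla w_j|^{p}\zeta_j^{p}$, where $w_j:=(1+n)^{(p+\alpha+\alpha_-)/p}$ and $p+\alpha+\alpha_-=p+\alpha_+>0$. The cross-diffusion term is controlled by \eqref{Sb}, the standing bound $|\nabla c|\le\mathfrak b$, and Young's inequality, producing a portion absorbed in the diffusion and a remainder of the form $C\,\mathfrak b^{p/(p-1)}\iint_{\{n>k_j\}}(1+n)^{p(\beta+\alpha_-)/(p-1)}\zeta_j^{p}$. The logistic contribution is non-positive on $\{n>k_j\}\supset\{n>K_f\}$ by \eqref{Kf}.

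\textbf{Step 2: Sobolev interpolation and iteration.} Apply the parabolic Gagliardo--Nirenberg inequality
\[
\iint_{Q_j}|v|^{q_\star}\,dx\,d\tau\le C\Bigl(\iint_{Q_j}|\nabla v|^{p}\,dx\,d\tau\Bigr)\Bigl(\sup_{\tau}\int_\Omega |v|^{2+\alpha_-}\,dx\Bigr)^{p/N}
\]
to $v:=(w_j-w_j(k_j))_+$, thereby promoting the energy bound to a control of $\iint_{Q_j}(n-k_j)_+^{q_\star}$, modulo reabsorption of terms carrying lower powers. Hypothesis \eqref{thm1-1} is exactly what guarantees $q_\star>\max\{p(\beta+\alpha_-)/(p-1),\,2+\alpha_-\}$, so the polluting terms are tamed by Hölder's inequality together with the Chebyshev bound $|A^+(k_{j+1},\tau)|\le(k\,2^{-j-1})^{-q_\star}\int_\Omega(n-k_j)_+^{q_\star}$ and the pointwise inequality $(n-k_j)_+\ge k\,2^{-j-1}$ on $\{n>k_{j+1}\}$. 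This produces the super-linear recursion above; the geometric iteration yields $Y_j\to 0$ under $k^{\lambda q_\star}\ge C\,\mathfrak b^{\sigma_1}(1+\hat t^{-\sigma_2})Y_0^{\lambda}$, and writing $Y_0\le\hat t\dashint_{t_0-\hat t}^{t_0}\int_\Omega n^{q_\star}\,dx\,d\tau$, solving for $k$ and adding the cutoff $K_f$ yields \eqref{th11} with explicit $\kappa,\hat\kappa$.

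\textbf{Main obstacle.} The delicate point is the bookkeeping of the weights $(1+n)^{\alpha_-}$, $(1+n)^{\alpha}$, and $(1+n)^{\beta}$ so that the chemotactic contribution is genuinely absorbed by the diffusion and the residual right-hand side of the recursion is strictly super-linear in $Y_j$ with explicit exponents in $\mathfrak b$ and $\hat t$. Assumption \eqref{thm1-1} is tailored to produce this gap and must be used simultaneously to fix the Gagliardo--Nirenberg exponent and to dominate the $\beta$-dependent chemotactic leftover — this is the single place where the restrictions on $\alpha,\beta,p,N$ are consumed.
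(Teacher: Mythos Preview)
Your plan is the same De Giorgi scheme the paper runs (Caccioppoli estimate + parabolic Sobolev embedding of Lemma~\ref{lem2.6} + the geometric iteration Lemma~\ref{lem-iteration}), and the recursion you aim for is exactly \eqref{pt4}. Two technical points in your execution deviate from the paper and need correction.

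First, your test function $(1+n)^{\alpha_-}(n-k_j)_+$ differs from the paper's $(n-k_j)_+^{1+\alpha_-}$; your choice is actually pleasant on the diffusion side for $\alpha<0$ (it cancels the $(1+n)^\alpha$ weight outright and avoids the monotonicity trick \eqref{d5'}), but your claim that the resulting primitive $\Phi_j$ is \emph{comparable} to $(n-k_j)_+^{2+\alpha_-}$ is only half true: the lower bound holds, while for $s$ near $k_j$ one only has $\Phi_j(s)\le C(1+s)^{\alpha_-}(s-k_j)_+^2$, so the time-derivative error carries an extra $(1+n)^{\alpha_-}$ that must be absorbed via $n\le C2^{j}(n-k_{j-1})$ on $\{n>k_j\}$ (producing the same $2^{(r+\alpha_-)j}$ factor the paper obtains in Lemma~\ref{lem2.5}). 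Second, your substitution $w_j=(1+n)^{(p+\alpha_+)/p}$ is superfluous and, for $\alpha>0$, does not return the stated exponent $q_\star=p(N+2+\alpha_-)/N$ after Gagliardo--Nirenberg: the paper simply discards the favorable factor $(n+1)^\alpha\ge 1$ in \eqref{d3} and applies Lemma~\ref{lem2.6} directly to $(n-k_j)_+$ with $m=2+\alpha_-$. With these two adjustments your outline coincides with the paper's proof.
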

\begin{remark}\label{r1}
In the context of equation \eqref{q1}$_1$, the upper bound of the solution $n$ should depend on the norm of $\nabla c$ in a certain Lebesgue space, as well as on the
logistic function and parameters present in the equation. The display \eqref{th11} illustrates how these factors affect the sup-estimate of solutions in a detailed manner. 
By controlling these factors, we can achieve a desired $L^\infty$-bound of $n$, which can then be utilized to further explore other properties of solutions.
This is one of our intentions in providing a delicate estimate of the upper bound.
\end{remark}
\begin{remark}\label{r2}
Since the specific forms of $\kappa$ and $\hat{\kappa}$ are not closely relevant to our
subsequent contents, for simplicity, we merely provide a brief explanation of the dependency of
$\kappa$ and $\hat{\kappa}$ in the statement of Theorem \ref{th1}.
We remark that $\kappa$ and $\hat{\kappa}$ are precisely defined by
$$\frac{1}{\kappa}:=\frac{p-1}{p}  \bigg(\frac{p(N+2+\alpha_-)}{N}  -\max\left\{\frac{p(\beta+\alpha_-)N}{p-1}\,,\, 2+\alpha_-\,,\, p\right\}\bigg)$$
and
$$\frac{1}{\hat{\kappa}}:=\frac{p+N}{p}  \bigg(\frac{p(N+2+\alpha_-)}{N}  -\max\left\{\frac{p(\beta+\alpha_-)N}{p-1}\,,\, 2+\alpha_-\,,\, p\right\}\bigg).$$
\end{remark}

Unlike the scenario where $\frac{p(N+2+\alpha_-)}{N}>\max\Big\{ \frac{  p(\beta+\alpha_-) }{p-1}\,,\, 2+\alpha_-\Big\}$, when $p$ locates in the other range, the local boundedness is not implicit into the notion of weak solution and must be obtained by imposing other information.

\begin{theorem} [\label{th2}Boundedness result]
Let $T\in(0,\infty]$ and $\Omega\subseteq\mathbb{R}^N$ $(N\ge 2)$ be a bounded domain with smooth boundary.
Let a pair of functions $(n,c)$ with $n\ge0$ in $Q_T$ be a weak solution to \eqref{q1} in the sense of Definition \ref{def}, where \eqref{Sa}--\eqref{Sf} are in force with
\begin{equation}\label{thm2-1}
\max\left\{  \frac{p(\beta+\alpha_-)  }{p-1}\,,\, 2+\alpha_-\right\}\ge \frac{p(N+2+\alpha_-)}{N}.
\end{equation}
Then for all $t_0\in(0,T)$ and any $\hat{t}\in(0,t_0) $, if there holds that $n$ is bounded in $\Omega\times(t_0-\hat{t},t_0)$ and
\begin{align}\label{mathb}
\mathfrak{b}:=\max\left\{\|\nabla c\|_{L^{ \infty}(\Omega\times(t_0-\hat{t},t_0))  }\, , \,1\right\}<\infty,
\end{align}
then we have the following upper bound of $n$:
\begin{align}\label{th21}
\underset{\Omega\times(t_0-\hat{t}/2,t_0)}
{\rm ess~ sup} n
\leq C\mathfrak{b}^{\kappa  }
\left(\hat{t}+\frac{1}{  \hat{t}^{\frac{N}{p}}  }\right)^{ \hat{\kappa}  }
\left(\dashint_{t_0-\hat{t}}^{t_0}
\int_{ \Omega } n^{r} d x d t\right)^{   \hat{\kappa}         }+K_f,
\end{align}
where
$$r>\max\left\{\frac{  p(\beta+\alpha_-)  }{p-1}\, , \,2+\alpha_-\, , \, \frac{N(2+\alpha_-)}{p}
-N\, , \,  \frac{(p+N)(\beta+\alpha_-)}{p(p-1)}-N-2-\alpha_- \right\}, $$
and $K_f$ is given in \eqref{Kf}, the positive constants $\kappa$, $\hat{\kappa}$ and $C$ depend on \verb"data".
\end{theorem}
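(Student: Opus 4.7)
The plan is to adapt the De Giorgi iteration of Theorem~\ref{th1} to the parameter regime~\eqref{thm2-1}, where the natural $L^{p(N+2+\alpha_-)/N}$ integrability coming from the weak-solution concept no longer suffices to close the recursion directly. The assumption that $n$ is already bounded on $\Omega\times(t_{0}-\hat t,t_{0})$ will be used only qualitatively, to legitimize the test-function choices and integrations by parts; the final estimate~\eqref{th21} must remain quantitatively expressed in $\dashint\!\int n^{r}$, with $r$ above the four thresholds displayed in the statement.

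First, for every level $k\ge K_{f}$ I would test \eqref{De1} with $\varphi=(n-k)_{+}^{s-1}\eta^{p}$, where $\eta$ is a smooth space-time cutoff localized between two nested cylinders shrinking from $\Omega\times(t_{0}-\hat t,t_{0})$ down to $\Omega\times(t_{0}-\hat t/2,t_{0})$, and $s>1$ is an exponent to be tuned to $r$. The transport term disappears by $\nabla\cdot\mathbf u=0$, the logistic term disappears since $f(n,x,t)\le 0$ on $\{n>k\}$ whenever $k\ge K_{f}$ by~\eqref{Kf}, and the structural bound~\eqref{Sa} extracts the good quantity $\iint|\nabla(n-k)_{+}^{(s+p-2+\alpha_-)/p}|^{p}\eta^{p}$ up to a multiplicative constant depending only on $\verb"data"$. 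The cross-diffusion contribution $b(n,x,t)\nabla c\cdot\nabla\varphi$ is handled via~\eqref{Sb}, \eqref{mathb} and Young's inequality, leaving an error term of the form $\mathfrak b^{p/(p-1)}\iint(n-k)_{+}^{s+p(\beta+\alpha_-)/(p-1)-1}\eta^{p}$ together with cutoff-derivative integrals in time and space.

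Next I would promote this into a single $L^{q}$ estimate via the parabolic Gagliardo--Nirenberg embedding
$$
\iint (n-k)_{+}^{\frac{p(N+s+\alpha_-)}{N}}\,dxd\tau\le C\Big\|(n-k)_{+}^{(s+\alpha_-)/p}\Big\|_{L^{\infty}_{t}L^{2}_{x}}^{2p/N}\Big\|\nabla (n-k)_{+}^{(s+p-2+\alpha_-)/p}\Big\|_{L^{p}_{t,x}}^{p},
$$
then use H\"older's inequality on the error integrals to reexpress them through $|A^{+}(k,\tau)|$ and $\dashint\!\int(n-k)_{+}^{r}$. Setting $k_{j}:=K_{f}+M(1-2^{-j-1})$ with $M$ to be chosen, working on a dyadic sequence of shrinking cylinders, and abbreviating $Y_{j}:=\dashint\!\int_{\Omega}(n-k_{j})_{+}^{r}\,dxdt$, careful accounting produces a recursion of the form
$$
Y_{j+1}\le C\,\mathfrak b^{\gamma_{1}}\,2^{\gamma_{2}j}\left(\hat t+\hat t^{-N/p}\right)^{\gamma_{3}}Y_{j}^{1+\delta}
$$
with exponents $\gamma_{i}$ and $\delta>0$ depending only on $\verb"data"$ and on the gap between $r$ and the four displayed thresholds. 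The standard fast-geometric-convergence lemma then forces $Y_{j}\to 0$ as soon as $Y_{0}$ is smaller than an explicit negative power of the prefactor, which, rearranged, is precisely~\eqref{th21}.

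The main obstacle lies in the algebraic balancing at the previous step: in the regime~\eqref{thm2-1} the Gagliardo--Nirenberg target exponent $p(N+s+\alpha_-)/N$ does \emph{not} automatically dominate the four error exponents arising from the cross-diffusion term, the $L^{2+\alpha_-}$ time-derivative slack, and the two space-time cutoff terms. Each of these four requirements forces exactly one of the thresholds in the definition of $r$ in the statement, and one must verify that taking $r$ strictly above their maximum yields a recursion exponent $\delta>0$ with constants depending only on $\verb"data"$, $\mathfrak b$ and $\hat t$. This is the place where Theorem~\ref{th2} genuinely diverges from Theorem~\ref{th1}, and it is precisely where the qualitative hypothesis $n\in L^{\infty}(\Omega\times(t_{0}-\hat t,t_{0}))$ is used: not inside the recursion, but only to guarantee that the starting quantity $Y_{0}$ is finite and that the truncated test functions $(n-k_{j})_{+}^{s-1}\eta^{p}$ are admissible in the sense of Definition~\ref{def}.
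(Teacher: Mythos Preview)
Your proposal takes a genuinely different route from the paper's proof, and the route you sketch has a concrete error that affects the rest of the argument.

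\textbf{What the paper actually does.} The paper does \emph{not} change the test function: it keeps the same Caccioppoli estimate and the same Gagliardo--Nirenberg target $q_0=\frac{p(N+2+\alpha_-)}{N}$ as in Theorem~\ref{th1}. The new ingredient is a \emph{double} iteration. One introduces an outer sequence of nested time intervals $\hat\Gamma_i$ shrinking from $(t_0-\hat t,t_0)$ to $(t_0-\hat t/2,t_0)$, sets $M_i:=\operatorname{ess\,sup}_{\Omega\times\hat\Gamma_i}n$, and for each fixed $i$ runs the inner De~Giorgi iteration between $\hat\Gamma_{i+1}$ and $\hat\Gamma_i$. At the inner level one uses the $L^\infty$ interpolation
\[
Y_j=\dashint\!\!\int w_{2j}^{\,r}\le \|w_{2j}\|_{L^\infty(\Omega\times\hat\Gamma_{i+1})}^{\,r-q_0}\,\dashint\!\!\int w_{2j}^{\,q_0}\le M_{i+1}^{\,r-q_0}\,\dashint\!\!\int w_{2j}^{\,q_0},
\]
after which Lemmas~\ref{lem2.5}--\ref{lem2.6} control $\dashint\!\int w_{2j}^{q_0}$ exactly as in Theorem~\ref{th1}. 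The inner iteration thus yields
\[
M_i\le C\,\mathfrak b^{\,\cdot}\,2^{i/\hat\theta}\Big(\hat t+\hat t^{-N/p}\Big)^{\cdot}\Big(\dashint\!\!\int n^{r}\Big)^{\cdot} M_{i+1}^{\,m}+K_f,
\qquad m:=\frac{Nr-p(N+2+\alpha_-)}{\hat\theta(p+N)}\in(0,1),
\]
and the four displayed thresholds on $r$ are precisely the conditions $\hat\theta>0$ and $m<1$. The outer iteration in $i$ then absorbs $M_{i+1}^{m}$ by Young's inequality and summation of a geometric series. So the boundedness hypothesis is used \emph{quantitatively} inside the argument (to make the interpolation above legitimate and finite), not merely to justify test functions.

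\textbf{The gap in your argument.} Your Gagliardo--Nirenberg identity is incorrect: testing with $(n-k)_+^{s-1}$ produces the time term $\sup_t\int w^{s+\alpha_-}$ together with the gradient term $\iint |\nabla w|^p\,w^{s-2+\alpha_-}(n+1)^{-\alpha_-}$. Applying Lemma~\ref{lem2.6} to $\varphi=w^{(s+p-2+\alpha_-)/p}$ gives the target exponent
\[
q_s=(s+p-2+\alpha_-)+\frac{p(s+\alpha_-)}{N},
\]
not $p(N+s+\alpha_-)/N$; the two agree only when $s=2$, i.e.\ precisely in the paper's choice. Your cross-diffusion error exponent is likewise off by one. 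With the corrected $q_s$, a single-iteration scheme of Moser type can in principle be made to close, but the algebra linking $s$ to the admissible range of $r$ no longer reproduces the four thresholds of the statement on the nose; you would have to re-derive them and check they coincide. The paper's two-scale argument bypasses this by never leaving the $q_0$-level energy, at the price of the extra outer loop.
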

\begin{remark}\label{r3}
One may say the Lipschitz continuity assumption on $c$ required in \eqref{tmathb} and \eqref{mathb} is strong.
Actually, the regularity of $c$, as an interconnected component to $n$ within a system, can be obtained through utilizing the semigroup estimates and Sobolev estimates for parabolic equation \eqref{q1}$_2$. Here, we give a sufficient condition ensuring \eqref{tmathb} without proof:
assume that
$$
g(x,t)=g(n(x,t),x,t)\in L^{q}(Q_T)
$$
with some $q>N+2$, then for any $t_0\in(0,T)$ we have $\|\nabla c\|_{L^{ \infty}(\Omega\times(t_0,T))  }<\infty$.
\end{remark}

 When considering the H\"{o}lder-continuity of solutions, 
 it becomes necessary to additionally impose the following conditions on $a$ and $b$:
\begin{align}\label{struc}
a(\xi, s,x,t)=\bar{a}(s,x,t)|\xi|^{p-2},
	 \qquad\xi\in\Bbb{R}^N,~ s \geq 0,~x\in\overline{\Omega},~t>0,
\end{align}
where $\bar{a}$ as well as $b$ satisfies that for any $(s,x,t)\in\Bbb{R}^+\times\overline{\Omega}\times\Bbb{R}^+ $ 
and $(s_1,x_1,t_1)\in\Bbb{R}^+\times\overline{\Omega}\times\Bbb{R}^+ $,
\begin{align}\label{Hola}
|\bar{a}(s,x,t)-\bar{a}(s_1,x_1,t_1)|\le C_H\big( |s-s_1|+|x-x_1|+|t-t_1|\big)^{\omega_a}
\end{align}
and  
\begin{align}\label{Holb}
|b(s,x,t)-b(s_1,x_1,t_1)|\le \hat{C}_H\big( |s-s_1|+|x-x_1|+|t-t_1|\big)^{\omega_b}
\end{align}
with some $C_H,\hat{C}_H>0$ and $\omega_a,\omega_b\in(0,1)$.

\begin{theorem} [\label{th3}H\"{o}lder continuity]
Assume that $T\in(0,\infty]$ and $\Omega\subseteq\mathbb{R}^N$ $(N\ge 2)$ is a bounded domain with smooth boundary.
Let a pair of functions $(n,c)$ with $n\ge0$ in $Q_T$ be a weak solution to \eqref{q1} in the sense of Definition \ref{def},
where \eqref{Sa}--\eqref{Sf}, \eqref{mu} and \eqref{struc}--\eqref{Holb} are in force. For any $t_0\in(0,T)$ and every $\hat{T}\in(t_0,T)$, if $n$ is bounded in $\overline{\Omega}\times[t_0,\hat{T}]$, then there holds that $n\in C^{\gamma,\frac{\gamma}{p}}\big(\overline{\Omega}\times[t_0+\varepsilon,\hat{T}]\big)$ with each $ \varepsilon\in(0,\hat{T}-t_0)$ and some $\gamma\in(0,1)$. Furthermore, we also have the H\"{o}lder continuity of gradient as follows:
\begin{itemize}
\item [{\rm (i)}] If $p>2$, then we can find $\omega\in(0,1)$ and $C>1$ depending upon
$\varepsilon, p, \alpha, \beta, a_0, b_0, C_H, \hat{C}_H, \omega_a, \omega_b, N, \Omega$, $\|n\|_{L^{\infty}(\Omega\times(t_0, \hat{T}))  }$, $\|n\|_{W^{1,p}(\Omega\times(t_0,\hat{T}))  }$ and $\|\mathbf{u}\|_{L^{\infty}(\Omega\times(t_0, \hat{T}))  }$  such that for any $V\subset\subset \Omega$,
\begin{align*}
	|\nabla n(x, t)  -\nabla n(\hat{x}, \hat{t})|
	\leq C\left(\frac{ |x-\hat{x}|+\sqrt{|t-\hat{t}|} }{\operatorname{dist}(V ; \partial\Omega)}\right)^{\omega},
	\qquad\forall~ (x,t),~(\hat{x},\hat{t})\in \overline{V}\times[t_0+\varepsilon,\hat{T}].
\end{align*}
\item [{\rm (ii)}]  Suppose $p=2$. Then there exist $\omega\in(0,1)$ and $C>1$ depending upon $\varepsilon,p,\alpha,\beta,a_0,b_0,C_H,\hat{C}_H,\omega_a$, $\omega_b,N,\Omega$, $\|n\|_{L^{ \infty}(\Omega\times(t_0,\hat{T}))  }$ and $\|\mathbf{u}\|_{L^{\infty}(\Omega\times(t_0, \hat{T}))  }$ such that
\begin{align*}
	|\nabla n(x, t)-\nabla n(\hat{x}, \hat{t})|
	\leq C\left(|x-\hat{x}|+\sqrt{|t-\hat{t}|}\right)^{\omega},\qquad\forall~ (x,t),~(\hat{x},\hat{t})\in \overline{\Omega}\times[t_0+\varepsilon,\hat{T}].
\end{align*}
\end{itemize}
\end{theorem}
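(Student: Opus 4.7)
My plan is to reduce the question to a standard quasilinear parabolic equation with $p$-Laplace-type principal part and then invoke DiBenedetto's regularity theory for this class. The first observation is that boundedness of $n$ together with $C^{1}$ regularity of $g$ yields $g(n,x,t)\in L^{\infty}(\Omega\times[t_{0},\hat{T}])$, and applying parabolic maximal regularity (or semigroup estimates for the Neumann heat semigroup) to the second equation in \eqref{q1} produces $\nabla c\in L^{\infty}(\Omega\times[t_{0}+\varepsilon/2,\hat{T}])$ for each $\varepsilon\in(0,\hat{T}-t_{0})$. Using $\nabla\cdot\mathbf{u}=0$ to rewrite $\tau\mathbf{u}\cdot\nabla n=\tau\nabla\cdot(n\mathbf{u})$, the first equation in \eqref{q1} can be recast in the form
\[
n_{t}=\nabla\cdot\mathbf{A}(x,t,n,\nabla n)+f(n,x,t),\qquad \mathbf{A}(x,t,n,\xi):=\bar{a}(n,x,t)|\xi|^{p-2}\xi-b(n,x,t)\nabla c+\tau n\mathbf{u}.
\]
Since $n$, $\nabla c$ and $\mathbf{u}$ are bounded on $\Omega\times[t_{0}+\varepsilon/2,\hat{T}]$, Young's inequality combined with \eqref{Sa} and \eqref{Sb} yields the ellipticity $\mathbf{A}(x,t,n,\xi)\cdot\xi\ge\frac{a_{0}}{2}(n+1)^{\al}|\xi|^{p}-C$ and the growth $|\mathbf{A}(x,t,n,\xi)|\le C|\xi|^{p-1}+C$, with the $(n+1)^{\al}$ factor absorbed into a constant because $\|n\|_{L^{\infty}}<\infty$; the forcing $f(n,x,t)$ is likewise bounded.

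Given this standard $p$-type structure together with homogeneous Neumann boundary conditions, DiBenedetto's intrinsic scaling method (cf.\ Chapters III and IV of \cite{D1993}) applies. The core ingredients are the Caccioppoli-type energy estimates on truncations $(n-k)_{\pm}$, combined with a De Giorgi-type oscillation decay in intrinsic cylinders of the form $K_{\rho}(x_{0})\times(\tau-\theta\rho^{p},\tau)$, where the scaling factor $\theta$ depends on the local oscillation of $n$. Iterating this oscillation decay produces a uniform modulus of continuity, hence $n\in C^{\gamma,\gamma/p}(\ov{\Om}\times[t_{0}+\varepsilon,\hat{T}])$ for some $\gamma\in(0,1)$; the smoothness of $\pa\Om$ together with the homogeneous Neumann condition permits the usual boundary adaptation (flattening plus even reflection), so the estimate extends up to $\pa\Om$.

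To upgrade to gradient H\"older continuity I treat the two cases separately. When $p>2$, combining the just-obtained H\"older continuity of $n$ with \eqref{Hola}--\eqref{Holb} shows that $\bar{a}(n,\cdot,\cdot)$ and $b(n,\cdot,\cdot)$ are H\"older continuous, and a further Schauder-type bootstrap on equation \eqref{q1}$_{2}$, whose right-hand side $-c+g(n,x,t)$ is now H\"older continuous, promotes $\nabla c$ to a H\"older continuous vector field on compact subsets of $\Om\times(t_{0},\hat{T})$. The interior $C^{1,\omega}$-regularity theory for degenerate parabolic equations with $p$-Laplace principal part and H\"older coefficients (DiBenedetto--Friedman; see also Chen--DiBenedetto and later refinements) then delivers the estimate in (i); the restriction to $V\subset\subset\Om$ reflects the absence of a global $C^{1,\omega}$ theory up to the boundary for the degenerate $p$-Laplacian. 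For $p=2$ the principal part reduces to $\bar{a}(n,x,t)\Delta n$, so the equation is uniformly parabolic and, after normalization, possesses H\"older continuous coefficients and H\"older continuous right-hand side; classical Schauder theory for second-order linear parabolic equations with Neumann data on a smooth domain then gives (ii). The main technical obstacle is organizing this bootstrap chain correctly: merely bounded $\nabla c$ is not enough to access the $C^{1,\omega}$-theory in the degenerate regime, so one must first cash in the H\"older continuity of $n$ to upgrade $\nabla c$, and only after that feed the improved regularity back into the $n$-equation.
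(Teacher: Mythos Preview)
Your proposal is correct and follows essentially the same route as the paper: rewrite \eqref{q1}$_1$ as a single quasilinear parabolic equation $n_t=\nabla\cdot\bar{\mathbf{a}}(\nabla n,n,x,t)+\bar f$ with $p$-Laplace-type structure, verify the standard coercivity and growth conditions using boundedness of $n$, $\nabla c$ and $\mathbf{u}$, and then invoke DiBenedetto's theory (Chapters III--IV for $C^{\gamma,\gamma/p}$, Chapter IX and related references for interior $C^{1,\omega}$ when $p>2$) together with Lieberman \cite{L1987} for the uniformly parabolic case $p=2$. Your write-up is in fact somewhat more careful than the paper's, since you explicitly derive $\nabla c\in L^\infty$ from the second equation and spell out the bootstrap from H\"older continuity of $n$ to H\"older continuity of the coefficients, whereas the paper simply cites the relevant theorems; one small slip is that for $p=2$ the principal part is $\nabla\cdot(\bar a(n,x,t)\nabla n)$ rather than $\bar a(n,x,t)\Delta n$, but this does not affect the argument since the divergence-form Schauder theory applies just the same.
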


As evidenced by findings in, for instance, \cite{W2010, W2013}, chemotaxis models can exhibit an infinite-time explosion phenomenon under certain conditions that are satisfied by parameters in systems and initial data. Therefore, if the $L^\infty$-norm of the solution remains bounded as the time-variable $t\rightarrow\infty$, then the infinite-time explosion can be ruled out in the systems. In addition, for models involving volume filling effects, determining asymptotic stability also necessitates a well-suited upper bound estimate of the solution beforehand. Hence, in the upcoming theorem, we will conduct a quantitative analysis of such bound of the global solutions for further use.

\begin{theorem} [\label{th4}Boundedness result]
Let $\Omega\subseteq\Bbb{R}^N$ $(N\ge 2)$ be a bounded domain with a smooth boundary.
Assume that $T=\infty$, and \eqref{Sa}--\eqref{Sf} as well as \eqref{mu} are in force. 
Let $m>N$ and
$$r>\max\bigg\{  \frac{    p(\beta+\alpha_-)N  }{p-1}\, , \,2+\alpha_-\, , \, \frac{N(2+\alpha_-)}{p}
-N\, , \,  \frac{(p+N)(\beta+\alpha_-)}{p(p-1)}-N-2-\alpha_-\, , \,\frac{p(N+2+\alpha_-)}{N} \bigg\}.$$
be arbitrarily chosen constants.
Then we can obtain pure constants $\Lambda>0$ and
$\kappa>0$, determined a priori only in terms of \verb"data", such that
for any globally bounded solution $(n,c)$ with $n\ge0$ in $Q_T$ to \eqref{q1} in the sense of Definition \ref{def}, 
if there exist $K>0$ and a time point $\bar{t}_0>0$ ensuring
$$\|\mathbf{u}(\cdot,t)\|_{L^\infty(\Omega)}<K,
\quad\|n(\cdot,t)\|_{L^{r}(\Omega)}\le K,\qquad \forall~t>\bar{t}_0$$
and
 $$\|g(\cdot,t)\|_{L^{m}(\Omega)}\le K,\qquad \forall~t>\bar{t}_0,$$
 where $$g(x,t):=g(n(x,t),x,t),\qquad x\in\Omega,~t>0,$$
then we can find $\bar{T}>\bar{t}_0$ satisfying
  \begin{align}\label{m0}
	\|n(\cdot,t)\|_{L^\infty(\Omega)} \le \Lambda(K+1)^\kappa+K_f,\qquad \forall~t>\bar{T},
  \end{align}
and $K_f$ is given in \eqref{Kf}.
\end{theorem}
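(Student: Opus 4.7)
The plan is to reduce the conclusion \eqref{m0} to a one-shot application of Theorem~\ref{th1} or Theorem~\ref{th2} on a unit-length time cylinder, after first converting the assumed uniform $L^{m}$-bound on $g$ (with $m>N$) into a time-uniform, $K$-polynomial $L^{\infty}$-bound on $\nabla c$. Once $\nabla c$ is controlled, the quantities $\mathfrak{b}$ appearing in \eqref{tmathb}/\eqref{mathb} and the $L^{r}$ average of $n$ appearing on the right-hand sides of \eqref{th11}/\eqref{th21} can both be bounded by $C(K+1)^{\kappa}$, and the desired estimate follows immediately.

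\textbf{Step 1: quantitative $L^{\infty}$-bound on $\nabla c$.} Writing the second equation of \eqref{q1} in mild form via the Neumann heat semigroup $\{e^{s(\Delta-1)}\}_{s\ge 0}$,
\[
c(\cdot,t)=e^{(t-\bar{t}_{0})(\Delta-1)}c(\cdot,\bar{t}_{0})+\int_{\bar{t}_{0}}^{t}e^{(t-s)(\Delta-1)}\bigl[g(s)-\tau\mathbf{u}(s)\cdot\nabla c(s)\bigr]\,ds,
\]
and using the standard $L^{q}\!\to\! L^{\infty}$ smoothing of $\nabla e^{s(\Delta-1)}$ with rate $s^{-1/2-N/(2q)}e^{-\lambda s}$, we carry out a finite bootstrap. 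Because $m>N$, the exponent $\tfrac{1}{2}+\tfrac{N}{2m}$ is strictly less than $1$, so the contribution of $g$ is integrable in $s$ and yields a bound of order $\|g\|_{L^{\infty}(L^{m})}\le K$; the convective term $\tau\mathbf{u}\cdot\nabla c$ is absorbed by improving the integrability of $\nabla c$ by a fixed positive amount in each iteration, using the uniform bound $\|\mathbf{u}\|_{L^{\infty}}\le K$. The dissipation from the $-c$ term provides the uniformity in time. After finitely many steps one arrives at
\[
\|\nabla c(\cdot,t)\|_{L^{\infty}(\Omega)}\le C(K+1)^{\kappa_{1}},\qquad \forall\,t\ge T_{1},
\]
for some $T_{1}>\bar{t}_{0}$ and pure constants $C,\kappa_{1}$ depending only on $\verb"data"$.

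\textbf{Step 2: application of the boundedness theorems.} Fix $t_{0}>T_{1}+1$ and take $\hat{t}=1$. The global boundedness of $n$ ensures the local $L^{\infty}$-hypothesis of Theorem~\ref{th2} is satisfied, while in the case \eqref{thm1-1} is in force Theorem~\ref{th1} applies verbatim. The choice of $r$ in the statement exceeds both $\tfrac{p(N+2+\alpha_{-})}{N}$ and all the lower thresholds appearing in Theorem~\ref{th2}, so via $L^{r}(\Omega)\hookrightarrow L^{r_{0}}(\Omega)$ for the relevant $r_{0}\le r$ on the bounded domain $\Omega$,
\[
\dashint_{t_{0}-1}^{t_{0}}\int_{\Omega}n^{r_{0}}\,dx\,dt\le C\,K^{r_{0}}.
\]
Feeding Step~1 and this average bound into \eqref{th11} or \eqref{th21} and absorbing the now-bounded factor $(\hat{t}+\hat{t}^{-N/p})^{\hat{\kappa}}$ into $C$ yields
\[
\|n(\cdot,t_{0})\|_{L^{\infty}(\Omega)}\le C(K+1)^{\kappa_{1}\kappa}K^{r_{0}\hat{\kappa}}+K_{f}\le \Lambda(K+1)^{\kappa}+K_{f},
\]
with redefined pure constants. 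Setting $\bar{T}:=T_{1}+1$ closes \eqref{m0} for every $t>\bar{T}$.

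\textbf{Expected main obstacle.} The whole argument hinges on Step~1: producing a time-uniform bound for $\|\nabla c(\cdot,t)\|_{L^{\infty}}$ which is \emph{polynomial} in $K$. The difficulty lies in the self-coupling through $\tau\mathbf{u}\cdot\nabla c$, which must be defused by a careful iterative improvement of the integrability of $\nabla c$; the strict inequality $m>N$ is what keeps each kernel integrable in time, and the exponential decay $e^{-\lambda s}$ from the $-c$ term is what turns every improper integral into a finite quantity independent of $t$. Once this quantitative bound is in hand, Theorems~\ref{th1}--\ref{th2} do the rest mechanically.
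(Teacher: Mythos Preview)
Your proposal is correct and mirrors the paper's proof: the paper packages your Step~1 as a separate Lemma~\ref{lem3.2} (a semigroup bootstrap yielding $\|\nabla c(\cdot,t)\|_{L^\infty}\le CK^\kappa+C$ for all large $t$, with the convective term absorbed exactly as you describe), and its proof of Theorem~\ref{th4} is then one line citing that lemma together with \eqref{th11} and \eqref{th21}. The only point you leave implicit is the seed of the bootstrap, which the paper obtains by first testing the $c$-equation with $c$ and then with $(-\Delta+1)c$ to produce an eventual, $K$-polynomial $L^2$-bound on $\nabla c$, after which the iteration $q_i=2(N/(N-1))^{i-1}$ reaches some $q>N$ in finitely many steps.
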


\noindent
{\bf Structure of the present work.}
Section \ref{sec2} in this work is primarily devoted to the proof of Theorems \ref{th1}\&\ref{th2} using the De Giorgi iteration method, 
with a focus on deriving local upper bound estimates for weak solutions.
On the basis of Theorems \ref{th1}\&\ref{th2}, we proceed to present the H\"{o}lder continuity of component $n$ through the proof of Theorem \ref{th3} in the remaining part of Section  \ref{sec2}, 
and then utilize Section  \ref{sec3} to build an initial-independent $L^\infty$-bound of $n$ as exhibited in Theorem \ref{th4}.
 The former can help us to improve the regularity of solutions to models involving the $p$-Laplace operator,
 while the latter can be used to determine asymptotic stability of solutions.
 
To better illustrate the role of our results in obtaining asymptotic stability and enhancing regularity for chemotaxis systems,
Section  \ref{sec4}  is devoted to providing four examples that discuss the long-time behaviors or regularity of solutions, respectively.
To highlight the broad applicability of our conclusions, we have selected models from different categories in these examples,
which concern chemotaxis models with nonlinear signal production, quasilinear chemotaxis-Navier-Stokes models,
$p$-Laplacian chemotaxis-consumption models, and chemotaxis-hypotaxis models.
Certainly, the applications of our theorems obtained here are not limited to the aforementioned models,
but due to space constraints, only a few examples are provided in detail here.

\section{Boundedness and H\"{o}lder continuity}
\label{sec2}
\begin{lemma} [\label{lem2.4}Caccioppoli inequality]
Let $(n,c)$ with $n\ge0$ in $Q_T$ be a weak solution to \eqref{q1}
under the conditions \eqref{Sa}-\eqref{Sf}.
Then we can find a pure constant
$C=C(\verb"data")>0$ such that 
for any cut-off function $\eta\in C_0^\infty((0,T))$, if there holds that 
$\nabla c\in L^\infty(\Omega\times {\rm supp} \eta)$,
then the following
inequality 
\begin{align}\label{d0}
&\quad\sup_{t\in(0,T)}\int_{\Omega}\big(n(x,t)-k\big)_{+}^{2+\alpha_-}\eta(t)  \,dx
+ \int^{T}_{0} \int_{ \Omega }
\left|\nabla(n-k)_{+}\right|^{p}(n+1)^{-\alpha_-}(n-k)_+^{\alpha_-}\eta \,dxd\tau\nonumber\\
&\le 
C\int^{T}_{0} \int_{A^+(k,\tau)}(n-k)_{+}^{2+\alpha_-}\eta_t(\tau)\,dxd\tau
+C\|\nabla c\|^{  \frac{p}{p-1}  }_{L^{ \infty  }(\Omega\times{\rm supp}\, \eta)  }
\int^{T}_{0}\int_{A^+(k,\tau)  } (n+1)^{ \frac{p(\beta+\alpha_-)}{p-1}  }\eta \,dxd\tau\nonumber\\
&\quad+C\int^{T}_{0} \int_{\Omega}f(n,x,\tau) (n-k)^{1+\alpha_-}_{+}\eta \,dxd\tau
\end{align}
is valid for every level $k>1$.
\end{lemma}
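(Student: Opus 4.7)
The plan is to insert in the weak formulation \eqref{De1} the test function $\varphi(x,t) = (n(x,t)-k)_+^{1+\alpha_-}\eta(t)\chi_\delta(t)$, where $\chi_\delta$ is a smooth approximation of the indicator $\chi_{(0,t_1)}$ for a fixed $t_1 \in (0,T)$. Since $(n-k)_+^{1+\alpha_-}$ is not a $C_0^\infty$ function of $(x,t)$, the admissibility of this choice is to be secured by first Steklov-averaging \eqref{De1} in time, approximating the map $s\mapsto (s-k)_+^{1+\alpha_-}$ by $C^\infty$ functions, and passing to the limit using the integrability conditions \eqref{da}--\eqref{df}; this technique is by now standard in degenerate parabolic theory (cf.\ \cite{D1993}).

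After integration by parts in $t$ and letting $\delta \to 0$, the time-derivative contribution yields $\tfrac{1}{2+\alpha_-}\int_{\Omega}(n(\cdot,t_1)-k)_+^{2+\alpha_-}\eta(t_1)\,dx$ minus $\tfrac{1}{2+\alpha_-}\int_0^{t_1}\int_{\Omega}(n-k)_+^{2+\alpha_-}\eta_t\,dxd\tau$; taking the supremum over $t_1 \in (0,T)$ then produces both the coercive $\sup_t$-term and the first right-hand side term of \eqref{d0}. For the principal diffusion, using $\nabla\varphi = (1+\alpha_-)(n-k)_+^{\alpha_-}\eta\,\nabla n$ on $A^+(k,\tau)$ together with \eqref{Sa} yields a lower bound proportional to $\int (n+1)^{\alpha}|\nabla n|^{p}(n-k)_+^{\alpha_-}\eta$; the elementary identity $\alpha+\alpha_-\ge 0$ upgrades the weight $(n+1)^{\alpha}$ to $(n+1)^{-\alpha_-}$ from below, and since $|\nabla n|=|\nabla (n-k)_+|$ on $A^+(k,\tau)$, this reproduces the coercive spatial term on the left of \eqref{d0}.

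For the cross-diffusion, I would apply \eqref{Sb} followed by Young's inequality with conjugate exponents $p$ and $\tfrac{p}{p-1}$ to the split
\[
|\nabla c|\,|\nabla n|(n+1)^{\beta}(n-k)_+^{\alpha_-}\eta = \bigl[|\nabla n|(n+1)^{-\alpha_-/p}(n-k)_+^{\alpha_-/p}\eta^{1/p}\bigr]\cdot\bigl[|\nabla c|(n+1)^{\beta+\alpha_-/p}(n-k)_+^{\alpha_-(p-1)/p}\eta^{(p-1)/p}\bigr],
\]
which produces an $\varepsilon$-multiple of the coercive diffusive term plus a residue of the form $C(\varepsilon)|\nabla c|^{p/(p-1)}(n+1)^{(p\beta+\alpha_-)/(p-1)}(n-k)_+^{\alpha_-}\eta$. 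Using $(n-k)_+^{\alpha_-}\le (n+1)^{\alpha_-}$ on $A^+(k,\tau)$ then collapses the two weights into the desired $(n+1)^{p(\beta+\alpha_-)/(p-1)}$, and pulling out $\|\nabla c\|_{L^\infty(\Omega\times\mathrm{supp}\,\eta)}^{p/(p-1)}$ recovers the second right-hand side term of \eqref{d0}; choosing $\varepsilon$ small, depending only on $a_0$ and $1+\alpha_-$, permits the absorption into the left-hand side. The convective term $\int n\,\mathbf{u}\cdot\nabla\varphi$ vanishes after rewriting $n(n-k)_+^{\alpha_-}\nabla n$ as $\nabla G(n)$ for a suitable primitive $G$ and invoking $\nabla\cdot\mathbf{u}=0$ (together with $\mathbf{u}\cdot\nu=0$, which is a structural feature of the fluid models of interest); the logistic contribution $\int f(n,x,\tau)(n-k)_+^{1+\alpha_-}\eta$ is kept unchanged and reproduces the final right-hand side term of \eqref{d0}.

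The main technical obstacle is to rigorously justify the Steklov-averaged test-function computation under the mere integrability \eqref{da}--\eqref{df}; in particular, passing to the limit in $a\nabla n\cdot\nabla\varphi$ and $b\nabla c\cdot\nabla\varphi$ requires careful bookkeeping because the weights in $\varphi$ and $\nabla\varphi$ interact nontrivially with the growth of $a$ and $b$. Once this step is in place, the remaining algebra is routine, with the identity $\alpha+\alpha_-\ge 0$ and the above Young splitting dictating the precise exponents that appear on the right-hand side of \eqref{d0}.
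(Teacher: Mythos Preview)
Your proposal is correct and follows essentially the same route as the paper: the paper also tests with $\eta(t)(n_h-k)_+^{1+\alpha_-}$ (presented there as two separate cases $\alpha\ge0$ and $\alpha<0$ rather than in your unified $\alpha_-$ notation), invokes \eqref{Sa} for the diffusion, and applies Young's inequality with exponents $p,\tfrac{p}{p-1}$ to the cross-diffusion to produce the $(n+1)^{p(\beta+\alpha_-)/(p-1)}$ term. Your explicit handling of the convective term via $\nabla\cdot\mathbf{u}=0$ fills in a step that the paper leaves implicit.
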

\begin{proof}[\bf Proof.]
{\bf Case $\alpha\ge0$.}
 In the weak formulation \eqref{De1} take the testing functions,
 $$\varphi(\cdot,t)=\eta(t) (n_h(\cdot,t)-k)_+, \qquad t\in(0,T),$$
 where $n_h$ is the Steklov average of $n$ given by
\begin{equation}
n_h (\cdot,t)\equiv
\begin{cases}
\frac{1}{h} \int_t^{t+h}n(\cdot, \tau) d \tau, & t \in(0, T-h], \\ 0, & t>T-h.
\end{cases}
\end{equation}
Through the arguments as proceeded in \cite[Proposition 3.1, Chapter II]{D1993}, we let $h\rightarrow0$ and hence derive that for any $t\in(0,T)$,
\begin{align}\label{d2}
&\quad\sup_{t\in(0,T)}\int_{\Omega}\big(n(x,t)-k\big)_{+}^{2}\eta(t) \,dx
+\int^{T}_{0} \int_{\Omega}a( \nabla n ,n, x, \tau )\nabla n\cdot \nabla(n-k)_{+}\eta(\tau) \,dxd\tau\nonumber\\
&\le \int^{T}_{0} \int_{\Omega}(n-k)_{+}^{2}\eta_t(\tau)\,dxd\tau
+\int^{T}_{0}\int_{\Omega } b(n, x, \tau )\nabla c\cdot\nabla\left(n-k\right)_{+}\eta \,dxd\tau\nonumber\\
&\quad+\int^{T}_{0} \int_{\Omega }f(n,x,\tau) (n-k)_{+}\eta \,dxd\tau,
\end{align}
where the second integral on the left-hand side can be estimated by (\ref{Sa}) as below,
\begin{align}\label{d3}
\int^{T}_{0}\int_{\Omega}a( \nabla n ,n, x, \tau )\nabla n\cdot \nabla(n-k)_{+}\eta\,dxd\tau
&\ge a_{0}\int^{T}_{0} \int_{ \Omega }
\left|\nabla(n-k)_{+}\right|^{p}(n+1)^{\alpha}\eta\,dxd\tau\nonumber\\
&\ge a_0\int^{T}_{0} \int_{ \Omega }
\left|\nabla(n-k)_{+} \right|^{p}\eta \,dxd\tau.
\end{align}
It follows by using \eqref{Sb} and Young's inequality that
\begin{align}\label{d4}
&\quad\int^{T}_{0} \int_{\Omega} b( n,x, \tau)\nabla\left(n-k\right)_{+}\cdot\nabla c\eta \,dxd\tau\nonumber\\
&\le b_{0}
\int^{T}_{0} \int_{A^+(k,\tau)} (n+1)^\beta\left| \nabla\left(n-k\right)_{+}\right|
\left| \nabla c\right| \eta \,dxd\tau\nonumber\\
&\leq \frac{a_{0}}{4} \int^{T}_{0}\int_{A^+(k,\tau)}
\left|\nabla\left(n-k\right)_{+}\right|^{p} \eta \,dxd\tau\nonumber\\
&\quad+\Big(\frac{4b_0^p}{a_0}\Big)^{\frac{1}{p-1}}\|\nabla c\|^{  \frac{p}{p-1}  }_{L^{ \infty  }(\Omega\times{\rm supp}\, \eta)  }
\int^{T}_{0}\int_{A^+(k,\tau)  } (n+1)^{ \frac{p\beta}{p-1} }\eta \,dxd\tau.
\end{align}
A combination of \eqref{d2}--\eqref{d4} guarantees our claim.\\
{\bf Case $\alpha<0$.}
Utilizing reasoning as leading to \eqref{d2}, we have via the test function $\eta(t)(n_h(\cdot,t)-k)^{1-\alpha}_+$ that
\begin{align}\label{d2'}
&\quad\sup_{t\in(0,T)}\frac{1}{2-\alpha}\int_{\Omega}\big(n(x,t)-k\big)_{+}^{2-\alpha}\eta(t) \,dx
+\int^{T}_{0} \int_{\Omega}a( \nabla n,  n, x, \tau)\nabla n\cdot \nabla(n-k)_{+}^{1-\alpha}\eta(\tau) \,dxd\tau\nonumber\\
&\le\frac{1}{2-\alpha}\int^{T}_{0} \int_{\Omega}(n-k)_{+}^{2-\alpha}\eta_t(\tau)\,dxd\tau
+\int^{T}_{0}\int_{\Omega} b( n,x, \tau)
\nabla c\cdot\nabla\left(n-k\right)_{+}^{1-\alpha}\eta(\tau) \,dxd\tau\nonumber\\
&\quad
+\int^{T}_{0} \int_{\Omega}f(n,x,\tau) (n-k)^{1-\alpha}_{+}\eta(\tau)\,dxd\tau.
\end{align}
The conditions \eqref{Sa} and \eqref{Sb} imply that
\begin{align}\label{d3'}
&\quad\int^{T}_{0}\int_{\Omega }
a( \nabla n,  n, x, \tau)
\nabla n\cdot \nabla(n-k)_{+}^{1-\alpha}\eta \,dxd\tau\nonumber\\
&\ge a_{0}(1-\alpha)\int^{T}_{0} \int_{\Omega }
\left|\nabla(n-k)_{+}\right|^{p}(n+1)^{\alpha}(n-k)_+^{-\alpha}\eta \,dxd\tau
\end{align}
and
\begin{align}\label{d4'}
&\quad\int^{T}_{0} \int_{\Omega } b( n ,x, \tau)\nabla\left(n-k\right)_{+}^{1-\alpha}\cdot\nabla c\eta \,dxd\tau\nonumber\\
&\le b_{0}(1-\alpha)
\int^{T}_{0} \int_{A^+(k,\tau)} (n-k)^{-\alpha}_+(n+1)^\beta\left|\nabla \left(n-k\right)_{+}\right| |\nabla c|\eta \,dxd\tau\nonumber\\
&\leq \frac{a_{0}(1-\alpha) }{4} \int^{T}_{0}\int_{A^+(k,\tau)}
\left|\nabla \left(n-k\right)_{+}\right|^{p}(n+1)^{\alpha}(n-k)^{-\alpha}_+\eta \,dxd\tau\nonumber\\
&\quad+
\Big(\frac{4b_0^p }{a_0}\Big)^{\frac{1}{p-1}}(1-\alpha)
\|\nabla c\|^{  \frac{p}{p-1}  }_{L^{ \infty  }(\Omega\times{\rm supp}\, \eta)  }
\int^{T}_{0}\int_{A^+(k,\tau)  } (n+1)^{ \frac{p(\beta-\alpha)}{p-1}  }\eta \,dxd\tau,
\end{align}
where in the last line we utilized Young's inequality. In light of \eqref{d2'}, \eqref{d3'} and \eqref{d4'}, we conclude that
\begin{align}\label{d5'}
&\quad\frac{1}{2-\alpha}\sup_{t\in(0,T)}\int_{\Omega}\big(n(x,t)-k\big)_{+}^{2-\alpha}\eta(t) \,dx\nonumber\\
&\quad+ \frac{a_0(1-\alpha)}{2}\int^{T}_{0} \int_{ \Omega }
\left|\nabla(n-k)_{+}\right|^{p}(n+1)^{\alpha}(n-k)_+^{-\alpha}\eta \,dxd\tau\nonumber\\
&\le (1-\alpha)\Big(\frac{4b_0^p}{a_0}\Big)^{\frac{1}{p-1}}\|\nabla c\|^{  \frac{p}{p-1}  }_{L^{ \infty  }(\Omega\times{\rm supp}\, \eta)  }
\int^{T}_{0}\int_{A^+(k,\tau)  } (n+1)^{ \frac{p}{p-1}(\beta-\alpha)  }\eta \,dxd\tau
\nonumber\\
&\quad+\int^{T}_{0} \int_{\Omega}f(n,x,\tau) (n-k)^{1-\alpha}_{+}\eta \,dxd\tau
+\frac{1}{2-\alpha}\int^{T}_{0} \int_{A^+(k,\tau)}(n-k)_{+}^{2}\eta_t(\tau)\,dxd\tau,
\end{align}
as expected.
\end{proof}

 With recalling the definition of $K_f$ in \eqref{Kf},
we select $k_0\geq K_f$ and choose sequences of increasing levels as
\begin{align}
&k_{j}:=\left(2-2^{-j}\right) k_0, \quad \tilde{k}_{j}:=\frac{k_{j+1}+k_{j}}{2},
 ~~~ j=0,1,2,\ldots.
\end{align}
Assume that $t_0\in(0,T)$ and $\hat{t}\in (0,T)$ are given as in Theorem \ref{th1} or Theorem \ref{th2}, 
and let $\hat{\tau}\in(0,\hat{t})$ and $\sigma\in(0,1)$ be specified later.
We set the intervals as
\begin{align}\label{Gam}
\Gamma_{j}:=\big(t_0-\sigma \hat{\tau}-2^{-j}(1-\sigma)\hat{\tau},t_0\big),
\qquad j=0, 1, 2, \cdots.
\end{align}
The relation $\Gamma_{j+1}\subseteq\Gamma_{j}$ clearly holds for all $j\in\mathbb{N}$.
Let
$\eta_j\in C_0^\infty((0,T))$ be the cut-off function w.r.t the time interval $\Gamma_{j}$ and satisfy
$${\rm supp}~ \eta_j\subseteq \Gamma_{j},
\quad | \eta_{j}'|\le\frac{C2^j}{(1-\sigma)\hat{\tau}}\,\,{\rm in}\,\,\Gamma_{j}
,\quad 0\le \eta_j\le 1\,\,{\rm in}\,\,\Gamma_{j}\,\,\,
{\rm and}\,\,\,\eta_j=1\,\,{\rm in}\,\,\Gamma_{j+1}.$$

\begin{lemma}\label{lem2.5}
Let $(n,c)$ with $n\ge0$ in $Q_T$ be a weak solution of \eqref{q1} under the conditions \eqref{Sa}--\eqref{Sf}.
Let $r$ be a number satisfying
$$r>\max\left\{ \frac{p(\beta+\alpha_-)}{p-1} \,,\,2+\alpha_-\,,\,p \right\}.$$
Assume that $n\in L^{r}(\Omega\times(t_0-\hat{t},t_0))$
and $\nabla c\in L^{\infty}(\Omega\times(t_0-\hat{t},t_0))$.
 Then
 we can find a pure constant $C=C(\verb"data")>0$ such that for every level $k_j\ge K_f$, the following
inequality is valid
\begin{align}\label{m0}
&\quad\sup _{t\in\Gamma_{j+1}}
\int_{\Omega}w_{j+1}^{2+\alpha_-}(\cdot, \tau)\,dx
+\int_{ \Gamma_{j+1} }\int_{\Omega} |\nabla w_{j+1} |^{p} \,dxd\tau
+\int_{ \Gamma_{j+1} }\int_{\Omega}  w_{j+1} ^{p} \,dxd\tau\nonumber\\
& \leq
\frac{  C 4^{  (r+\alpha_-)j   } 
\|\nabla c\|^{  \frac{p}{p-1}  }_{L^{ \infty}(\Omega\times(t_0-\hat{t},t_0))  } }{  k_0^{ r-\frac{p (\beta+\alpha_-)}{p-1}    }  }
\int_{\Gamma_j}\int_{ \Omega }w_{j-1}^{  r  } \,dxd\tau
+\frac{  C 2^{(r+\alpha_-)j} }{(1-\sigma)\hat{\tau} k_0^{ r-2-\alpha_-  } }
\int_{\Gamma_j} \int_{ \Omega } w_{j-1} ^{ r }\,dxd\tau\nonumber\\
&
\quad+ \frac{  C 2^{(r+\alpha_-)j} }{ k_0^{ r-p } } \int_{\Gamma_j}\int_{\Omega }w_{j-1}^{ r }\,dxd\tau,
\end{align}
where $\Gamma_j$ is defined in \eqref{Gam} and $w_j=(n-k_j)_+$.
\end{lemma}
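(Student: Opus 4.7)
The plan is to apply Lemma~\ref{lem2.4} with level $k=k_{j+1}$ and cut-off $\eta_j$, and then convert every right-hand side term produced by Caccioppoli into a multiple of $\int_{\Gamma_j}\!\!\int_\Omega w_{j-1}^{\,r}\,dxd\tau$ via algebraic inequalities valid pointwise on the level set $A^+(k_{j+1},\tau)$. Caccioppoli supplies on its left both the sup-in-time of $\int w_{j+1}^{2+\alpha_-}\eta_j\,dx$ and a weighted $p$-gradient integral, and on its right three source terms: an $\eta_j'$-term, a $\|\nabla c\|_\infty^{p/(p-1)}$-term, and the logistic $f$-term.

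The backbone of the estimate consists of three elementary pointwise facts on $A^+(k_{j+1},\tau)$, all consequences of $k_{j+1}-k_{j-1}=3\cdot 2^{-(j+1)}k_0$: (a) $w_{j-1}\ge 3\cdot 2^{-(j+1)}k_0$; (b) $w_{j+1}\le w_{j-1}$; and (c) the comparison $n+1\le C\,2^{j}w_{j-1}$, obtained from $n+1\le 2n=2(w_{j-1}+k_{j-1})$ together with $k_{j-1}\le 2k_0\le C\,2^{j}w_{j-1}$ by (a). Iterating (a) produces the key interpolation
$$
w_{j-1}^{\,a}\;\le\;\frac{C\,2^{(r-a)j}}{k_0^{\,r-a}}\,w_{j-1}^{\,r},\qquad 0<a<r,
$$
and combined with (c) this also gives $(n+1)^{a}\le C\,2^{rj}k_0^{\,a-r}w_{j-1}^{\,r}$ in the same range of $a$.

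With these tools the three source terms are treated in turn. The $f$-term vanishes: on $A^+(k_{j+1},\tau)$ one has $n>k_{j+1}\ge k_0\ge K_f$, so \eqref{Kf} yields $f(n,x,\tau)\le 0$ and the nonpositive contribution is discarded. The $\eta_j'$-term is bounded using $w_{j+1}^{2+\alpha_-}\le w_{j-1}^{2+\alpha_-}$, the interpolation above with $a=2+\alpha_-<r$, and $|\eta_j'|\le C\,2^{j}/[(1-\sigma)\hat\tau]$; this produces exactly the second summand of \eqref{m0}. The $\|\nabla c\|_\infty$-term is bounded by the $(n+1)^{a}$ estimate with $a=p(\beta+\alpha_-)/(p-1)<r$, producing the first summand. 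On the left side, the sup over $\Gamma_{j+1}$ is immediate because $\eta_j\equiv 1$ there, and the summand $\int_{\Gamma_{j+1}}\!\int_\Omega w_{j+1}^{p}$ on the left is inserted for free via $w_{j+1}^{p}\le w_{j-1}^{p}$ combined with the interpolation at $a=p<r$, which is dominated by the third summand on the right.

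The main technical obstacle is extracting the un-weighted gradient $\int|\nabla w_{j+1}|^{p}$ on the left when $\alpha<0$, since Caccioppoli only delivers $|\nabla w_{j+1}|^{p}(n+1)^{-\alpha_-}w_{j+1}^{\alpha_-}$, and the factor $(w_{j+1}/(n+1))^{\alpha_-}$ degenerates on the transition zone where $n$ is comparable to $k_{j+1}$. There one has to split $A^+(k_{j+1},\tau)$ into the region $\{n\ge 2(k_{j+1}+1)\}$, on which $w_{j+1}/(n+1)\ge 1/2$ and the comparison is direct, and its complement $\{k_{j+1}<n<2(k_{j+1}+1)\}$, on which $n+1$ is comparable to $k_0$ and the missing weight is recovered from the sup-estimate of $w_{j+1}^{2+\alpha_-}$ through the key interpolation. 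For $\alpha\ge 0$ this subtlety is absent because $\alpha_-=0$. The remaining bookkeeping of the $2^{j\cdot(\text{exponent})}$ factors is routine: every exponent produced above is at most $r$, hence dominated by $2(r+\alpha_-)$, i.e.\ by $4^{(r+\alpha_-)j}$ as advertised in \eqref{m0}.
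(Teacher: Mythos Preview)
Your outline is essentially correct for $\alpha\ge 0$, but has a genuine gap in the degenerate case $\alpha<0$. The proposed splitting of $A^+(k_{j+1},\tau)$ into $\{n\ge 2(k_{j+1}+1)\}$ and the transition zone $\{k_{j+1}<n<2(k_{j+1}+1)\}$ does not let you extract the unweighted gradient. On the transition zone the weight $(w_{j+1}/(n+1))^{\alpha_-}$ in the Caccioppoli term tends to zero as $n\downarrow k_{j+1}$, so the weighted integral carries no information about $\int_{\mathrm{transition}}|\nabla w_{j+1}|^p$. Your remedy, ``the missing weight is recovered from the sup-estimate of $w_{j+1}^{2+\alpha_-}$,'' cannot work: that is a mass bound and gives no control on the gradient in that region.

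The paper sidesteps this by applying the Caccioppoli inequality (Lemma~\ref{lem2.4}) at the \emph{lower} level $k_j$ rather than $k_{j+1}$. The weighted gradient term then reads
\[
\int_{\Gamma_j}\int_\Omega |\nabla w_j|^p\,(n+1)^{\alpha}\,w_j^{-\alpha}\,\eta_j\,dxd\tau,
\]
and on the set $\{n>k_{j+1}\}$ one has $w_j=n-k_j\ge k_{j+1}-k_j=2^{-(j+1)}k_0>0$, while $n+1\le k_{j+1}+1\sim k_0$ at the bottom. Since $x\mapsto \big((x-k_j)/(x+1)\big)^{\alpha_-}$ is increasing for $x>k_j$, the weight is bounded below on $\{n>k_{j+1}\}$ by $\big((k_{j+1}-k_j)/(k_{j+1}+1)\big)^{\alpha_-}\sim 2^{-j\alpha_-}$. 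Because $|\nabla w_j|=|\nabla w_{j+1}|$ on $\{n>k_{j+1}\}$, this yields
\[
\int_{\Gamma_{j+1}}\int_\Omega |\nabla w_{j+1}|^p\,dxd\tau
\;\le\; C\,2^{\,j\alpha_-}\int_{\Gamma_j}\int_\Omega |\nabla w_j|^p\,(n+1)^{\alpha}\,w_j^{-\alpha}\,\eta_j\,dxd\tau,
\]
which is exactly the mechanism generating the extra $\alpha_-$ in the exponents of \eqref{m0}. Shifting the Caccioppoli level from $k_{j+1}$ down to $k_j$ is the missing idea; once you do this, the rest of your pointwise algebra (your facts (a)--(c) and the interpolation $w_{j-1}^a\le C\,2^{(r-a)j}k_0^{a-r}w_{j-1}^r$) goes through as you describe.
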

\begin{proof}[\bf Proof.]
{\bf Case $\alpha\ge0$.}
It is not hard to verify that for all $\ell\ge m\ge0$ and $j\in \mathbb{N}^+$,
\begin{align}\label{em2}
w_{j-1}^\ell & \geq\left(n-k_{j-1}\right)_{+}^\ell \chi_{\left\{n \geq k_{j}\right\}}(x, t) \nonumber\\
& \geq\left(k_{j}-k_{j-1}\right)^{\ell-m}\left(n-k_{j-1}\right)_{+}^m \chi_{\left\{n \geq k_{j}\right\}}(x, t) \nonumber\\
& \geq  k_0^{\ell-m} 2^{-(\ell-m) (j+1)}\left(n-k_{j-1}\right)_{+}^m\chi_{\left\{n \geq k_{j}\right\}}(x, t)\\ \label{em1}
& \geq  k_0^{\ell-m} 2^{-(\ell-m) (j+1)} w_{j}^m \chi_{\left\{n \geq k_{j}\right\}}(x, t)\quad \text {in } Q_T.
\end{align}
By invoking Lemma \ref{lem2.4} and noticing \eqref{Kf} and $k_j\ge K_f$, we have
\begin{align}\label{g1}
&\quad\sup _{\tau\in\Gamma_j}\int_{\Omega}w_j^{2}\eta_j(\tau)\,dx
+ a_0\int_{\Gamma_j} \int_{A^+(k_j,\tau)}
\left|\nabla w_j \right|^{p}\eta_j \,dxd\tau\nonumber\\
&\le \int_{\Gamma_j} \int_{A^+(k_j,\tau)}w_j ^{2}|\eta'_{j}(\tau)|\,dxd\tau
+C\|\nabla c\|^{  \frac{p}{p-1}  }_{L^{\infty}(\Omega\times \Gamma_0)  }
\int_{\Gamma_j}\int_{A^+(k_j,\tau)  } (n+1)^{ \frac{p\beta}{p-1}   }\eta_j\,dxd\tau\nonumber\\
&\le \int_{\Gamma_j} \int_{A^+(k_j,\tau)} w_j^{2}|\eta'_{j}(\tau)|\,dxd\tau
+C\|\nabla c\|^{  \frac{p}{p-1}  }_{L^{  \infty }(\Omega\times \Gamma_0)  }
\int_{\Gamma_j}\int_{A^+(k_j,\tau)  } n^{ \frac{p\beta}{p-1}   }\eta_j \,dxd\tau\nonumber\\
&
\quad+C\|\nabla c\|^{  \frac{p}{p-1}  }_{L^{ \infty }(\Omega\times \Gamma_0)  }
\int_{\Gamma_j}|A^+(k_j,\tau)|\,d\tau ,
\end{align}
where the first term on the right-hand side can be estimated by \eqref{em1} as below,
\begin{align}\label{m1}
 \int_{\Gamma_j} \int_{A^+(k_j,\tau)}w_j ^{2}|\eta_{j}'(\tau)|\,dxd\tau
\leq \frac{ C 2^{r j } }{ (1-\sigma)\hat{\tau} k_0^{ r-2 } }
\int_{\Gamma_j} \int_{ \Omega } w_{j-1} ^{ r }\,dxd\tau.
\end{align}
Similarly, we also have
 \begin{align}\label{m11}
\int_{\Gamma_j} \int_{A^+(k_j,\tau)}w_j ^{p} \,dxd\tau
\leq \frac{2^{r (j+1)} }{ k_0^{ r-p }  }
\int_{\Gamma_j} \int_{ \Omega } w_{j-1} ^{ r }\,dxd\tau.
\end{align}
By virtue of \eqref{em2}, there holds that
\begin{align}\label{m12}
\int_{\Gamma_j}\int_{\Omega }w_{j-1}^{  r } \,dxd\tau
&\ge\frac{ k_0^{  r   } }{2^{ r(j+1) } }
\int_{\Gamma_j}\left|A^+(k_j,\tau)\right|\,d\tau,
\end{align}
and
\begin{align}\label{m41}
\int_{\Gamma_j}\int_{\Omega }w_{j-1}^{ r } \,dxd\tau
& \geq\frac{Ck_0^{ r-\frac{p\beta}{p-1}  } }{  2^{ (r-\frac{p\beta}{p-1}) j  }  }
\int_{\Gamma_j}\int_{ A^+(k_{j},\tau) }
w_{j-1}^{  \frac{p\beta}{p-1}  } \,dxd\tau  \nonumber\\
& \geq \frac{Ck_0^{ r-\frac{p\beta}{p-1}  } }{  2^{ (r-\frac{p\beta}{p-1}) j  }  }
\int_{\Gamma_j}\int_{  A^+(k_{j},\tau)  } n^{   \frac{p\beta}{p-1}   }
\left(1-\frac{k_{j-1}}{k_{j}}\right)^{  \frac{p\beta}{p-1}   }\,dxd\tau\nonumber\\
& \geq \frac{Ck_0^{  r-\frac{p\beta}{p-1}    }}{ 2^{   r(j+1)     } }
\int_{ \Gamma_j }\int_{A^+(k_{j},\tau)  }n^{   \frac{p\beta}{p-1}   }\,dxd\tau.
\end{align}
We substitute \eqref{m1}--\eqref{m41} into \eqref{g1} to derive that
\begin{align}\label{m5}
&\quad\sup _{t\in\Gamma_{j+1}}\int_{\Omega}w_{j}^{2}\,dx
+\int_{\Gamma_{j+1}}\int_{\Omega} |\nabla w_{j}|^{p} \,dxd\tau
+\int_{\Gamma_{j+1}} \int_{ \Omega }w_j ^{p} \,dxd\tau\nonumber\\
& \leq \frac{ C 2^{ rj  } \|\nabla c\|^{  \frac{p}{p-1}  }_{L^{ \infty}(\Omega\times(t_0,T) )  } }{  k_0^{ r-\frac{p \beta}{p-1}    }  }
\int_{\Gamma_j}\int_{ \Omega }w_{j-1}^{  r  } \,dxd\tau
+\frac{  C 2^{rj} }{(1-\sigma)\hat{\tau} k_0^{ r-2  } }
\int_{\Gamma_j} \int_{ \Omega } w_{j-1} ^{ r }\,dxd\tau\nonumber\\
&+ \frac{  C 2^{rj} }{ k_0^{ r-p } } \int_{\Gamma_j}\int_{\Omega }w_{j-1}^{ r } \,dxd\tau.
\end{align}
{\bf Case $\alpha<0$.}
By employing Lemma \ref{lem2.4} again, we have that
\begin{align}\label{d5}
&\quad\sup _{t\in \Gamma_j}\int_{\Omega}w_j^{2-\alpha}\eta_j(\tau) \,dx
+ \int_{\Gamma_j} \int_{\Omega}
\left|\nabla w_j\right|^{p}(n+1)^{\alpha}w_j^{-\alpha}\eta_j \,dxd\tau\nonumber\\
&\le \int_{\Gamma_j} \int_{A^+(k_j,\tau)} w_j^{2-\alpha}|\eta'_{j}(\tau)|\,dxd\tau
+C\|\nabla c\|^{  \frac{p}{p-1}  }_{L^{  \infty  }(\Omega\times\Gamma_0 )  }
 \int_{\Gamma_j}\int_{A^+(k_j,\tau)  } (n+1)^{ \frac{p(\beta+\alpha_-)}{p-1}  }\eta_j \,dxd\tau\nonumber\\
&\le \int_{\Gamma_j} \int_{A^+(k_j,\tau)} w_j^{2-\alpha}|\eta'_{j}(\tau)|\,dxd\tau
+C\|\nabla c\|^{  \frac{p}{p-1}  }_{L^{ \infty}(\Omega\times\Gamma_0  )  }
\int_{\Gamma_j}\int_{A^+(k_j,\tau)  } n^{ \frac{p(\beta+\alpha_-)}{p-1}   }\eta_j \,dxd\tau\nonumber\\
&\quad+C\|\nabla c\|^{  \frac{p}{p-1}  }_{L^{  \infty  }(\Omega\times\Gamma_0 )  }
\int_{\Gamma_j}|A^+(k_j,\tau)|\,d\tau.
\end{align}
Noticing the monotonicity of the function $f_j(x):=(x-k_j)^{-\alpha}(x+1)^\alpha$ defined on $(k_j,\infty)$,
we obtain that
\begin{align}\label{d5'}
\int_{\Gamma_j} \int_{\Omega}
\left|\nabla w_j\right|^{p}(n+1)^{\alpha} w_j^{-\alpha}\eta_j \,dxd\tau
\ge &\int_{\Gamma_j} \int_{ A^+(k_{j+1},\tau) } \left|\nabla w_j\right|^{p}(k_{j+1}+1)^{\alpha}(k_{j+1}-k_j)_+^{-\alpha}\eta_j \,dxd\tau\nonumber\\
=& \frac{k_0^{-\alpha} }{ (k_0+2^{j+1})^{-\alpha} }
\int_{\Gamma_j} \int_{ A^+(k_{j+1},\tau) } \left|\nabla w_j\right|^{p}\eta_j \,dxd\tau\nonumber\\
\ge &2^{(j+2)\alpha}\int_{\Gamma_j} \int_{ A^+(k_{j+1},\tau) } \left|\nabla w_j\right|^{p}\eta_j  \,dxd\tau\nonumber\\
\ge &2^{(j+2)\alpha}\int_{\Gamma_{j+1}} \int_{ \Omega } \left|\nabla w_{j+1}\right|^{p} \,dxd\tau.
\end{align}
By utilizing the similar arguments as leading to \eqref{m1} and \eqref{m41}, we obtain
\begin{align}\label{m1'}
 \int_{\Gamma_j} \int_{A^+(k_j,\tau)}w_j ^{2-\alpha}|\eta_{j}'(\tau)|\,dxd\tau
\leq \frac{  C2^{r j } }{ (1-\sigma)\hat{\tau} k_0^{ r-2+\alpha } }
\int_{\Gamma_j} \int_{ \Omega } w_{j-1} ^{ r }\,dxd\tau
\end{align}
and
\begin{align}\label{m41'}
\int_{ \Gamma_j }\int_{ A^+(k_j,\tau) }
n^{ \frac{p(\beta+\alpha_-)}{p-1}   }\,dxd\tau
\leq\frac{  2^{   r(j+1)     } }{k_0^{  r-\frac{p(\beta-\alpha)}{p-1}    }}
\int_{\Gamma_j}\int_{\Omega }w_{j-1}^{ r } \,dxd\tau.
\end{align}
A combination of \eqref{d5}--\eqref{m41'}, \eqref{m11} and \eqref{m12} implies that
\begin{align}\label{m5'}
&\quad\sup _{t\in\Gamma_{j+1}}
\int_{\Omega}w_{j+1}^{2-\alpha}(\cdot, \tau)\,dx
+\int_{ \Gamma_{j+1} }\int_{\Omega} |\nabla w_{j+1} |^{p} \,dxd\tau
+\int_{ \Gamma_{j+1} }\int_{\Omega}  w_{j+1} ^{p} \,dxd\tau\nonumber\\
& \leq
\frac{ C 2^{ (r-\alpha)j} \|\nabla c\|^{  \frac{p}{p-1}  }_{L^{ \infty}(\Omega\times(t_0,T))  } }{  k_0^{ r-\frac{p (\beta+\alpha_-)}{p-1}}}
\int_{\Gamma_j}\int_{ \Omega }w_{j-1}^{  r  } \,dxd\tau
+\frac{  C 2^{(r-\alpha)j} }{(1-\sigma)\hat{\tau} k_0^{ r-2-\alpha_-  } }
\int_{\Gamma_j} \int_{ \Omega } w_{j-1} ^{ r }\,dxd\tau\nonumber\\
&\quad+\frac{  C 2^{  (r-\alpha)j     } }{k_0^{  r-p    }}
\int_{\Gamma_j}\int_{\Omega }w_{j-1}^{ r } \,dxd\tau.
\end{align}
This together with \eqref{m5} allows us to arrive at the claim immediately.
\end{proof}

The following inequality can be obtained by an application of the H\"{o}lder inequality and Sobolev embedding inequality, as proved in \cite[Chapter I, Proposition 3.1]{D1993} and \cite[Lemma 2.1]{L1987}.
\begin{lemma}\label{lem2.6}
Let $T>0$, $m\ge 1$, $p\ge 1 $ and $\Omega\subseteq\Bbb{R}^N$ $(N\ge2)$ be a bounded domain with Lipschitz boundary. There exists a constant $C$ depending only upon $N, p, m$
 and the structure of $\partial\Omega$ such that any $\varphi \in L^{\infty}\big(0, T ; L^{m}(\Omega)\big) \cap
L^{p}\big(0, T ; W^{1, p}(\Omega)\big)$ satisfies
\begin{align*}
\int_{Q_{T}}|\varphi(x, t)|^{\frac{p(N+m)}{N}} \,dxdt
\leq
C\left(\int_{Q_{T}}|\nabla \varphi(x, t)|^{p}+| \varphi(x, t)|^{p} \,dxdt\right)
\left(\underset{0<t<T}
{\rm ess~ sup}\int_{\Omega}|\varphi(x, t)|^{m} \,dx\right)^{\frac{p}{N}}.
\end{align*}
\end{lemma}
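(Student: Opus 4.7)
The plan is to execute a classical slicewise Gagliardo--Nirenberg argument: apply the Sobolev embedding in space at a.e.\ fixed time, interpolate via H\"older to the critical exponent $q:=\frac{p(N+m)}{N}$, and then integrate in time so that the time-independent $L^m$-factor emerges as an essential supremum while the $L^{p^*}$-factor is absorbed into the $W^{1,p}$-norm on $Q_T$.

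First I would fix $t\in(0,T)$ at which $\varphi(\cdot,t)\in W^{1,p}(\Omega)$ and apply the Sobolev embedding $W^{1,p}(\Omega)\hookrightarrow L^{p^*}(\Omega)$, with $p^*:=\frac{Np}{N-p}$ in the subcritical regime $p<N$, to obtain
$$\|\varphi(\cdot,t)\|_{L^{p^*}(\Omega)}^{p}\leq C\int_{\Omega}\bigl(|\nabla\varphi(\cdot,t)|^{p}+|\varphi(\cdot,t)|^{p}\bigr)\,dx.$$
Next I would invoke the three-exponent H\"older interpolation $\|\varphi(\cdot,t)\|_{L^{q}}\leq\|\varphi(\cdot,t)\|_{L^{m}}^{\theta}\|\varphi(\cdot,t)\|_{L^{p^*}}^{1-\theta}$ with $\theta\in(0,1)$ chosen so that $\frac{1}{q}=\frac{\theta}{m}+\frac{1-\theta}{p^*}$. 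The decisive algebraic check is that, with this $\theta$ and the specific choice $q=\frac{p(N+m)}{N}$, one has simultaneously
$$(1-\theta)q=p\qquad\text{and}\qquad\theta q=\frac{pm}{N}.$$

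Raising the H\"older bound to the $q$-th power pointwise in $t$ and then integrating in $t$ then gives
$$\int_{0}^{T}\!\!\int_{\Omega}|\varphi|^{q}\,dx\,dt\leq\Bigl(\underset{0<t<T}{\operatorname{ess\,sup}}\,\|\varphi(\cdot,t)\|_{L^{m}(\Omega)}^{m}\Bigr)^{\!p/N}\int_{0}^{T}\|\varphi(\cdot,t)\|_{L^{p^*}(\Omega)}^{p}\,dt,$$
so that substituting the Sobolev bound from the first step yields exactly the claimed inequality. The constant $C$ depends on $N,p,m$ and $\partial\Omega$ only through the Sobolev embedding constant.

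The main obstacle is the supercritical regime $p\geq N$, where $p^*$ is not furnished by the standard embedding. Here I would either pick a sufficiently large finite $\tilde p^{*}$ (available since $W^{1,p}(\Omega)\hookrightarrow L^{\tilde p^{*}}(\Omega)$ for every finite $\tilde p^{*}$ when $p=N$, and for $\tilde p^{*}=\infty$ when $p>N$) and re-balance the H\"older interpolation --- noting that the natural scaling of the target inequality forces $\theta q=\frac{pm}{N}$ and $(1-\theta)q=p$, so the exponents on the right-hand side are stable under this replacement --- or, for $p>N$, exploit $W^{1,p}\hookrightarrow L^{\infty}$ together with the decomposition $|\varphi|^{q}=|\varphi|^{q-m}|\varphi|^{m}$ to reach the same bound directly. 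Either route delivers the general statement with the same exponents.
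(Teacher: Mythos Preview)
Your slicewise Sobolev-plus-H\"older argument for $p<N$ is correct and is exactly the argument behind the references the paper cites (DiBenedetto, Chapter~I, Proposition~3.1, and Lieberman, Lemma~2.1): the paper does not give an independent proof but simply points to those sources, which carry out precisely the computation you wrote down.

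There is, however, a genuine gap in your treatment of the regime $p\geq N$. Your claim that ``the exponents on the right-hand side are stable under this replacement'' of $p^*$ by a large finite $\tilde p^*$ is false: once you change the endpoint in the H\"older interpolation, the resulting $\theta$ changes and you obtain $(1-\theta)q>p$ and $\theta q<\tfrac{pm}{N}$, so after applying $\|\varphi\|_{L^{\tilde p^*}}\le C\|\varphi\|_{W^{1,p}}$ you are left with an excess power of $\|\varphi\|_{W^{1,p}}$ that cannot be traded for the missing power of $\|\varphi\|_{L^m}$ (that would require $\|\varphi\|_{W^{1,p}}\le C\|\varphi\|_{L^m}$, which is false). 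Your second alternative for $p>N$, the split $|\varphi|^{q}=|\varphi|^{q-m}|\varphi|^{m}$ combined with $W^{1,p}\hookrightarrow L^\infty$, produces $\|\varphi\|_{W^{1,p}}^{q-m}\bigl(\int|\varphi|^m\bigr)^{1}$, whereas the target has $\|\varphi\|_{W^{1,p}}^{p}\bigl(\int|\varphi|^m\bigr)^{p/N}$; since $q-m=p+\tfrac{m(p-N)}{N}\neq p$ and $1\neq p/N$ when $p\neq N$, this is a different (true, but different) inequality.

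The clean fix, and the one the cited references actually rely on, is to invoke the full Gagliardo--Nirenberg interpolation inequality
\[
\|\varphi(\cdot,t)\|_{L^q(\Omega)}\le C\,\|\varphi(\cdot,t)\|_{W^{1,p}(\Omega)}^{a}\,\|\varphi(\cdot,t)\|_{L^m(\Omega)}^{1-a},\qquad a=\tfrac{N}{N+m},
\]
which is valid for every $p\ge 1$ (the exceptional case $a=1$ with $1-\tfrac{N}{p}\in\mathbb{N}$ does not occur here since $a<1$). With this single inequality the identities $aq=p$ and $(1-a)q=\tfrac{pm}{N}$ hold uniformly in $p$, and your time-integration step then finishes the proof in all cases.
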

\begin{lemma} [Lemma 4.1, \cite{D1993}\label{lem-iteration}]
Let $\left\{Y_{j}\right\}_{j\in \Bbb{N}}$ be a sequence of positive
numbers, satisfying the recursive inequalities
\begin{align*}
Y_{j+1} \leq K b^{j} Y_{j}^{1+\delta}, ~~ j=0,1,2,\ldots,
\end{align*}
where $K, b>1$ and $\delta>0$ are given numbers. If
$$
Y_{0} \leq K^{ -\frac{1}{\delta} } b^{  -\frac{1}{\delta^2 }   },
$$
then $Y_{j}$ converges to zero as $j\rightarrow\infty$.
\end{lemma}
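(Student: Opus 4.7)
The plan is to prove by induction on $j$ that $Y_j \leq Y_0 \xi^j$ for a suitable constant $\xi \in (0,1)$, from which $Y_j \to 0$ follows immediately since the right-hand side forms a geometric sequence with ratio less than one. The base case $j=0$ is trivial. For the inductive step, substituting the hypothesized bound into the recursive inequality yields
\begin{align*}
Y_{j+1} \leq K b^{j} \left(Y_0 \xi^j\right)^{1+\delta} = Y_0 \xi^{j+1} \cdot K Y_0^{\delta} \xi^{-1} \left(b\xi^\delta\right)^{j},
\end{align*}
so the induction closes provided that $K Y_0^{\delta} \xi^{-1} \left(b\xi^\delta\right)^{j} \leq 1$ for every $j \geq 0$.

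The decisive observation, and essentially the only nontrivial point in the argument, is that the $j$-dependence on the left vanishes precisely when one selects $\xi$ so that $b\xi^\delta = 1$, namely $\xi := b^{-1/\delta}$. This choice lies in $(0,1)$ since $b > 1$, and the remaining constraint reduces to $K Y_0^\delta \leq \xi = b^{-1/\delta}$, which after raising to the power $1/\delta$ is exactly the standing hypothesis $Y_0 \leq K^{-1/\delta} b^{-1/\delta^2}$. The induction therefore proceeds at every step, delivering the explicit decay $Y_j \leq Y_0 \, b^{-j/\delta}$, and in particular $Y_j \to 0$ as $j \to \infty$. There is no genuine obstacle beyond guessing the correct geometric ansatz; the exponents $1/\delta$ and $1/\delta^2$ in the smallness threshold for $Y_0$ are then forced by the need to neutralize the growing factor $b^j$ against the super-linear contraction coming from the exponent $1+\delta$ in the recursion.
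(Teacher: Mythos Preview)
Your proof is correct and is precisely the standard induction argument for this classical iteration lemma; the paper does not supply its own proof but merely cites the result from DiBenedetto, where the same geometric-decay ansatz $Y_j \le Y_0\, b^{-j/\delta}$ is used.
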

\begin{proof}
[\bf Proof of Theorem \ref{th1}.]
Let the assumptions \eqref{Sa}--\eqref{Sf} be satisfied as well as \eqref{tmathb} and
$(n,c)$ with $n\ge0$ in $Q_T$ be a weak solution to the model \eqref{q1} in the sense of Definition \ref{def}.
Set $$r:=\frac{p(N+2+\alpha_-)}{N}.$$
By \eqref{thm1-1}, we further define $$\hat{\theta}:=r -\max\left\{\frac{p(\beta+\alpha_-)}{p-1}\,,\, 2+\alpha_-\,,\, p\right\}>0.$$
Invoking Lemma \ref{lem2.5} with taking $\sigma=1/2$ and $\hat{\tau}=\hat{t}$, and recalling the definition of $\mathfrak{b}$, one sees that
\begin{align}\label{pt3}
&\quad\sup _{t\in\Gamma_{j+1}}
\int_{\Omega}w_{j+1}^{2+\alpha_-}(\cdot, t)\,dx
+\int_{\Gamma_{j+1} }\int_{\Omega} |\nabla w_{j+1}|^{p} \,dxd\tau
+\int_{\Gamma_{j+1} }\int_{\Omega}  w_{j+1}^{p} \,dxd\tau\nonumber\\
& \leq \frac{ C2^{  rj   } \mathfrak{b}^{  \frac{p}{p-1}  }      }{  k_0^{ r-\frac{p (\beta+\alpha_-)}{p-1}    }  }
\int_{\Gamma_j}\int_{ \Omega }w_{j-1}^{  r  } \,dxd\tau
+\frac{  C 2^{rj} }{\hat{t} k_0^{ r-2-\alpha_-  } }
\int_{\Gamma_j} \int_{ \Omega } w_{j-1} ^{ r }\,dxd\tau
+\frac{  C 2^{rj} }{ k_0^{ r-p  } }
\int_{\Gamma_j} \int_{ \Omega } w_{j-1} ^{ r }\,dxd\tau\nonumber\\
& \leq C\Big(1+\frac{1}{\hat{t}} \Big)
\frac{ 2^{rj }\mathfrak{b}^{  \frac{p}{p-1}  }  }{  k_0^{  \hat{\theta}   }  }
\int_{\Gamma_j}\int_{ \Omega }w_{j-1}^{  r  } \,dxd\tau,
\end{align}
where the condition $k_0\ge1$ is employed in the last line. 
According to Lemma \ref{lem2.6} applied to the function $w_{j+1}$,
\begin{align*}
\dashint_{\Gamma_{j+1} }\int_{\Omega} w_{j+1}^{\frac{p(N+2+\alpha_-)}{N}} \,dxd\tau
&\leq
C\left(\dashint_{\Gamma_{j+1} }\int_{\Omega} |\nabla w_{j+1} |^{p}+w^p_{i+1} \,dxd\tau\right)
\left(\underset{t\in\Gamma_{j+1}}
{\rm ess~ sup} \int_{\Omega}w_{j+1}^{2+\alpha_-}\,dx\right)^{\frac{p}{N}},
\end{align*}
which combining with \eqref{pt3} yields that
\begin{align}\label{pt4}
\dashint_{\Gamma_{j+1} }\int_{\Omega} w_{j+1}^{\frac{p(N+2+\alpha_-)}{N}} \,dxd\tau
 \leq C \Big(1+\frac{1}{\hat{t}} \Big)^{ \frac{p+N}{ N}  } \hat{t}^{\frac{p}{ N} }
 \frac{  2^{ \frac{jr(p+N)}{N}}\mathfrak{b}^{ \frac{p+N}{ N}\cdot\frac{p}{p-1}  }   }{  k_0^{\frac{(p+N)\hat{\theta}}{N}}  }
\left(\dashint_{\Gamma_{j-1}}\int_{ \Omega }w_{j-1}^{   r  } \,dxd\tau\right)^{\frac{p+N}{ N}}.
\end{align}
Now, let us define the quantity $Y_j$ as below,
$$Y_j:=\dashint_{\Gamma_{2j}}\int_{ \Omega }w_{2j}^{   \frac{p(N+2+\alpha_-)}{N} } \,dxd\tau
=\dashint_{\Gamma_{2j}}\int_{ \Omega }w_{2j}^{ r } \,dxd\tau,\qquad j=0,1,2
\ldots.$$
The estimate \eqref{pt4} directly gives a pure constant $\bar{ C}=\bar{C}(\verb"data")>0$ such that
\begin{align}
Y_j&\le \bar{C} \Big(1+\frac{1}{\hat{t}} \Big)^{ \frac{p+N}{ N}  } \hat{t}^{\frac{p}{ N} }
 \frac{  4^{ \frac{jr(p+N)}{N}}\mathfrak{b}^{ \frac{p(p+N)}{ N(p-1)}  }   }{  k_0^{\frac{(p+N)\hat{\theta}}{p}}  }
\left(\dashint_{\Gamma_{2j-2}}\int_{ \Omega }w_{2j-2}^{   r  } \,dxd\tau\right)^{\frac{p+N}{ N}}\nonumber\\
&\le \bar{ C} \Big(\hat{t}+\frac{1}{\hat{t}^{\frac{N}{p}}   } \Big)^{ \frac{p}{ N}  }
 \frac{   4^{ \frac{jr(p+N)}{N}}\mathfrak{b}^{  \frac{p(p+N)}{ N(p-1)}}   }{  k_0^{\frac{(p+N)\hat{\theta}}{N}}  }
Y_{j-1}^{  1+\frac{p}{ N}  }
,\qquad j=1,2,3\ldots.
\end{align}
If $Y_0$ satisfies the estimate
\begin{equation}
\begin{aligned}
Y_0=\dashint_{t_0-\hat{t}}^{t_0}\int_{\Omega }\left(n-k_{0}\right)_{+}^{     \frac{p(N+2+\alpha_-)}{N}        }\,dxd\tau
\leq (2\bar{ C})^{-\frac{N}{p}}  4^{-\frac{r(p+N)N}{p^2}} \Big(\hat{t}+\frac{1}{\hat{t}^{\frac{N}{p}}   } \Big)^{ -1 }
\mathfrak{b}^{   -\frac{N+p}{p-1}  } k_0^{\frac{\hat{\theta}(p+N)}{p}},
\end{aligned}
\end{equation}
then we can derive by invoking Lemma \ref{lem-iteration} that $Y_j
\rightarrow0$ as $j\rightarrow\infty$. Hence, we select
\begin{align*}
k_0\ge\max\Bigg\{&K_f \,,\,(2\bar{ C})^{\frac{N}{\hat{\theta}(p+N)}} 4^{\frac{Nr}{\hat{\theta}p}  }
 \mathfrak{b}^{ \frac{p}{ \hat{\theta} (p-1)  }  } \Big(\hat{t}+\frac{1}{  \hat{t}^{\frac{N}{p}}  } \Big)^{     \frac{p}{\hat{\theta}(p+N)}       }
 \left( \dashint_{t_0-\hat{t}}^{t_0}\int_{\Omega }n^{    \frac{p(N+2+\alpha_-)}{N}       }
\,dx d \tau\right)^{     \frac{p}{\hat{\theta}(p+N)}       }
\Bigg\}
\end{align*}
to ensure that
\begin{align*}
\underset{\Omega\times(t_0-\hat{t}/2,t_0)}
{\rm ess~ sup} n(\cdot, t)
\leq C  \mathfrak{b}^{ \frac{p}{ \hat{\theta} (p-1)  }  }  \Big(\hat{t}+\frac{1}{  \hat{t}^{\frac{N}{p}}  } \Big)^{     \frac{p}{\hat{\theta}(p+N)}       }
\left(\dashint_{t_0-\hat{t}}^{t_0}\int_{\Omega }n^{    \frac{p(N+2+\alpha_-)}{N}        }
\,dx d \tau\right)^{     \frac{p}{\hat{\theta}(p+N)}       }
+K_f,
\end{align*}
as desired.
\end{proof}

\medskip

\begin{proof}
[\bf Proof of Theorem \ref{th2}.]
Let the assumptions \eqref{A}--\eqref{Sf} as well as \eqref{mathb} be satisfied and
$(n,c)$ with $n\ge0$ in $Q_T$ be a weak solution of \eqref{q1}.
With defining $T_0=\hat{t}/2$, $T_i=\hat{t}/2+\sum^i_{k=1}2^{-k-1}\hat{t}$ ($i\in\Bbb{N}^+$) and
$$\hat{\Gamma}_{i}=(t_0-T_i,t_0),\qquad i=1,2,3,\cdots,$$
we choose $\hat{\tau}=T_{i+1}$ and $\sigma\hat{ \tau}=T_i$ in \eqref{Gam},
and thereby find a sequence of time interval $\{\Gamma_{j}\}_{j=0}^{+\infty}$
$$\Gamma_{j}:=\Gamma^{(i)}_{j}=\Big(t_0-T_i-2^{-j}(T_{i+1}-T_i),t_0\Big),\qquad j=0,1,2,\cdots.$$
Clearly, it can be checked that
$$1-\sigma=\frac{2^{-i-2}}{1/2+\sum^{i+1}_{k=1}2^{-k-1}}\ge2^{-i-2}$$
and
$$\hat{\Gamma}_{i}\subseteq\ldots\Gamma_{j+1}\subseteq \Gamma_{j}
\subseteq\ldots \subseteq\Gamma_{1}\subseteq\Gamma_{0}=\hat{\Gamma}_{i+1}\subseteq (t_0-\hat{t},t_0).$$
Then, we select some
$$
r>\max\left\{\frac{p(\beta+\alpha_-)}{p-1}\, , \,2+\alpha_-\, , \, \frac{N(2+\alpha_-)}{p}
-N\, , \,  \frac{(p+N)(\beta+\alpha_-)}{p(p-1)}-N-2-\alpha_- \right\}
$$
and still, let
$$\hat{\theta}:=r -\max\left\{\frac{p(\beta+\alpha_-)}{p-1}\, ,\,2+\alpha_-\right\}>0.$$
The choice of $r$ guarantees with \eqref{thm2-1} that
$$m:=\frac{  Nr- p(N+2+\alpha_-)  }{ \hat{\theta}(p+N)  }\in(0,1).$$
Now let us define the quantity $Y_j$ as below,
$$
Y_j:=\dashint_{\Gamma_{2j}}\int_{ \Omega }w_{2j}^{   r } \,dxd\tau,\quad j=0,1,2,\cdots
$$
and
$$M_i=\underset{\Omega\times\hat{\Gamma}_{i}}{\rm ess\sup }~n, \quad i=0,1,2,\cdots.$$
Thus, recalling the definition of $\mathfrak{b}$ and utilizing Lemmas \ref{lem2.5}--\ref{lem2.6}, there holds that
\begin{align}
Y_j&\leq \|w_{2j}\|^{   r- \frac{p(N+2+\alpha_-)}{N} }_{L^\infty(\Omega\times \Gamma_{2j})}
\dashint_{\Gamma_{2j}}\int_{ \Omega }w_{2j}^{   \frac{p(N+2+\alpha_-)}{N} } \,dxd\tau\nonumber\\
&\leq
C\|w_{2j}\|^{   r- \frac{p(N+2+\alpha_-)}{N} }_{L^\infty(\Omega\times\hat{\Gamma}_{i+1}     )}
\left(\dashint_{\Gamma_{2j} }\int_{\Omega} |\nabla w_{2j} |^{p}+w^p_{2j}\,dxd\tau\right)
\left(\sup _{t\in\Gamma_{2j}} \int_{\Omega}w_{2j}^{2+\alpha_-}\,dx\right)^{\frac{p}{N}}\nonumber\\
\nonumber\\
& \leq C\mathfrak{b}^{ \frac{p(p+N)}{ N(p-1)  }  } 2^{ \frac{(p+N)i}{ N}  } 4^{ \frac{r(p+N)j}{N}}
\Big(1+\frac{1}{\hat{t}} \Big)^{ \frac{p+N}{ N}  }  \hat{t}^{ \frac{p}{N}  }
 \frac{   M_{i+1}^{   r- \frac{p(N+2+\alpha_-)}{N} }    }{  k_0^{\frac{(p+N)\hat{\theta}}{N}}  }
\left(\dashint_{\Gamma_{2j-2}}\int_{ \Omega }w_{2j-2}^{   r  } \,dxd\tau\right)^{\frac{p+N}{ N}}\nonumber\\
& \leq\bar{C}\mathfrak{b}^{ \frac{p(p+N)}{ N(p-1)}  } 2^{ \frac{(p+N)i}{ N}  }  4^{ \frac{r(p+N)j}{N}}
 \Big(\hat{t}+\frac{1}{\hat{t}^{\frac{N}{p}}   } \Big)^{ \frac{p}{ N}  }
 \frac{  M_{i+1}^{   r- \frac{p(N+2+\alpha_-)}{N} }    }{  k_0^{\frac{(p+N)\hat{\theta}}{N}}  }
Y_{j-1}^{1+\frac{p}{ N}},
\end{align}
where the pure constant $\bar{C}>0$ depends only on \verb"data". This in conjunction with Lemma \ref{lem-iteration}
indicates that $Y_{j} \rightarrow 0$ as $j \rightarrow \infty$, provided that
$$
Y_0\le
\bar{ C}^{-\frac{N}{p}} 2^{-\frac{(p+N)i}{p}} 4^{-\frac{r(p+N)N}{p^2}}\Big(\hat{t}+\frac{1}{   \hat{t}^{\frac{N}{ p} }   } \Big)^{ -1 }
 \mathfrak{b}^{   -\frac{p+N}{p-1}   }
k_0^{\frac{\hat{\theta}(p+N)}{p}}M_{i+1}^\frac{p(N+2+\alpha_-)-Nr}{p},
$$
The above inequality can be achieved by taking
\begin{align*}
k_0\ge\max\bigg\{\bar{C}2^{\frac{i}{ \hat{\theta }}}
 4^{\frac{rN}{p \hat{\theta }}}
 \mathfrak{b}^{\frac{p}{\hat{\theta}(p-1) }  }
 \Big(\hat{t}+\frac{1}{  \hat{t}^{\frac{N}{p}}  } \Big)^{     \frac{p}{\hat{\theta}(p+N)}       }
\left(\dashint_{\hat{\Gamma}_i}\int_{ \Omega } n^{r} \,dxd\tau\right)^{   \frac{p}{\hat{\theta}(p+N)}         }
M_{i+1}^{\frac{  Nr- p(N+2+\alpha_-)  }{ \hat{\theta}(p+N)  }}\,   ,\,    K_f      \bigg\}.
\end{align*}
With this choice of $k_0$ and recalling the definition of $m$, we by $Y_j\rightarrow0$ have
\begin{align}\label{add4}
\underset{\Omega\times\hat{\Gamma}_{i}}{\rm ess\sup }~ n
\leq C2^{\frac{i}{ \hat{\theta }}}    \mathfrak{b}^{\frac{p}{\hat{\theta} (p-1) }  }
\Big(\hat{t}+\frac{1}{  \hat{t}^{\frac{N}{p}}  } \Big)^{     \frac{p}{\hat{\theta}(p+N)}       }
\left(\dashint_{\hat{\Gamma}_i}\int_{ \Omega } n^{r} \,dxd\tau\right)^{   \frac{p}{\hat{\theta}(p+N)}         }
M_{i+1}^{  m }
+K_f.
\end{align}
An application of Young's inequality to \eqref{add4} implies that for $i=0,1,2, \cdots$,
\begin{align*}
M_{i} \leq &\eta M_{i+1}
+C2^{   \frac{i}{ \hat{\theta }(1-m)}   }    \mathfrak{b}^{\frac{p}{   \hat{\theta}(p-1)(1-m)   }  }
\Big(\hat{t}+\frac{1}{  \hat{t}^{\frac{N}{p}}  } \Big)^{ \frac{p}{ \hat{\theta} (p+N)(1-m) }  }
\eta^{-\frac{m}{1-m}}\left(\dashint_{\hat{\Gamma}_i}\int_{ \Omega } n^{r} \,dxd\tau\right)^{   \frac{p}{\hat{\theta}(p+N)(1-m)}         }
+K_f.
\end{align*}
By using an iteration argument, we get
\begin{align}
M_{0}& \leq \eta^{i+1} M_{i+1}+
 C \mathfrak{b}^{\frac{p}{   \hat{\theta}(p-1)(1-m)   }  }
  \Big(\hat{t}+\frac{1}{  \hat{t}^{\frac{N}{p}}  } \Big)^{ \frac{p}{ \hat{\theta} (p+N)(1-m) }  }
\eta^{-\frac{m}{1-m}}\left(\dashint_{\hat{\Gamma}_0    }
\int_{ \Omega } n^{r} \,dxd\tau\right)^{   \frac{p}{\hat{\theta}(p+N)(1-m)}         }\nonumber\\
&\quad\times\sum_{k=0}^{i}(2^{   \frac{1}{ \hat{\theta }(1-m) }   }\eta)^{k} +K_f\sum_{k=0}^{i}\eta^{k}, \quad i=0,1,2, \ldots.
\end{align}
We choose $\eta=1/(2^{   \frac{1}{ \hat{\theta }(1-m)}+1})$ to deduce that the sum on the right-hand side
can be majored by a convergent series, and then take $i \rightarrow \infty$ to obtain
\begin{align}
\underset{\Omega\times(t_0-\hat{t}/2,t_0)}
{\rm ess~ sup} n
\leq C  \mathfrak{b}^{\frac{p}{   \hat{\theta}(p-1)(1-m)   }  }
\Big(\hat{t}+\frac{1}{  \hat{t}^{\frac{N}{p}}  } \Big)^{ \frac{p}{ \hat{\theta} (p+N)(1-m) }  }
\left(\dashint_{{t_0-\hat{t}} }^{ t_0 }
\int_{ \Omega } n^{r} \,dxd\tau\right)^{   \frac{p}{\hat{\theta}(p+N)(1-m)}         }+K_f.
\end{align}
This ends the proof.
\end{proof}

\medskip

\begin{proof}
[\bf Proof of Theorem \ref{th3}.]
Let us denote
$$\bar{\mathbf{a}}(\xi, s,x,t):=a(\xi, s,x,t)\xi-b(s,x,t)\nabla c-\tau n\mathbf{u},\qquad\xi\in\Bbb{R}^N,~ s \geq 0,~x\in\overline{\Omega},~t>0$$
and
$$\bar{f}(x,t):=f(n(x,t),x,t),\qquad x\in\Omega,~t>0.$$
Under this setting, \eqref{q1}$_1$ with the boundary condition \eqref{q1}$_3$ turns to be
 \begin{eqnarray*}
 \begin{cases}
    \begin{array}{llll}
       \displaystyle n_t=\nabla\cdot\bar{\mathbf{a}}(\nabla n, n,x,t)+\bar{f}(x,t), &&x\in\Omega,\,t>0,\\
        \displaystyle \bar{\mathbf{a}}(\nabla n, n,x,t) \cdot\nu=0, &&x\in\partial\Omega,\,t>0.
    \end{array}
 \end{cases}
\end{eqnarray*}
Based on the boundedness results established in Theorems \ref{th1}\&\ref{th2},
it is easy to verify that $\bar{\mathbf{a}}$ satisfies
$$\bar{\mathbf{a}}(\nabla n, n,x,t)\cdot \nabla n \ge a_1|\nabla n|^p-a_2,\qquad x\in\Omega,~t>0,$$
$$|\bar{\mathbf{a}}(\nabla n, n,x,t)| \le a_3|\nabla n|^{p-1}+a_4,\qquad x\in\Omega,~t>0$$
and
$$|\bar{f}(x,t)| \le a_5,\qquad x\in\Omega,~t>0$$
with $a_i>0$ ($i=1,2,3,4,5$).
According to \cite[Theorem 1.3, Chapter IV$\&$ Theorem 1.3, Chapter III]{D1993} and \cite[Lemma 2.3]{L1987},
we can directly claim that
$n$ is H\"{o}lder continuous in $\overline{\Omega} \times\big(t_0, \hat{T}\big]$ for any $t_0,\hat{T}\in (0,T)$
satisfying $t_0< \hat{T}$.
More specifically, for every pair of points $\left(x_1, t_1\right),\left(x_2, t_2\right) \in \overline{\Omega} \times\big[t_0+\varepsilon, \hat{T}\big]$,
\begin{align*}
| n\left(x_1, t_1\right) -n\left(x_2, t_2\right)|
 \leq C
\left(\left|x_1-x_2\right|+\left|t_1-t_2\right|^{  \frac{1}{p}  }\right)^\gamma.
\end{align*}
The constants $C>1$ and $\gamma\in(0,1)$ depend only upon $t_0,\|n\|_{L^\infty(\Omega \times(t_0, \hat{T}))}$,
$a_i$ ($1\le i\le5$) and the structure of $\partial \Omega$.

Furthermore,  we apply the regularity results established for degenerate parabolic equations, e.g., \cite[Theorem 1.1$'$, Chapter IX]{D1993}, \cite[Theorem 5.1, Chapter VIII]{D1993} and \cite[Proposition 4.1]{DZZ}
to obtain the H\"{o}lder continuity of $ \nabla n$ for the case of $p>2$.
When $p=2$, the global continuity of $\nabla n$ can be found in \cite[Theorem 1]{L1987}.
\end{proof}

\section{An upper bound independent of initial data}
\label{sec3}
This section is dedicated to building an upper bound of $n$ independent of initial data.
We let $\mathcal{L}_q$ denote the operator $-\Delta+1$ under homogenous Neumann boundary condition, that is, the sectorial operator is defined by
$$
\mathcal{L}_q \varphi:=-\Delta  \varphi+ \varphi
\quad \text { for }  \varphi \in D(\mathcal{L}_q):=\left\{\varphi\in W^{2,q}(\Omega): \partial_\nu  \varphi=0 \text { on } \partial \Omega\right\}.
$$
Then it is known from \cite{F1969,HW2005} that $\mathcal{L}$ possesses closed
fractional powers $\mathcal{L}_q^{\gamma}$ $(\gamma> 0)$, which
is a self-joint operator defined on the domain $D(\mathcal{L}_q^{\gamma})$.
\begin{lemma}\label{lem3.1}
{\rm (i)}~{\rm \cite[page 56]{HW2005}} Let $\gamma\in(0,1)$ and $1\le q<\ell\le\infty$.
The analytic semigroup $\left\{e^{-t\mathcal{L}}\right\}_{t \geq 0}$
{\rm (}which is independent of $q$ in the sense that $e^{-t \mathcal{L}_q} \varphi=e^{-t \mathcal{L}_{\ell}} \varphi$
whenever $\varphi \in L^{\ell}(\Omega) \cap L^q(\Omega)${\rm)} satisfies that
$$
\left\|\mathcal{L}^{\gamma} e^{-t\mathcal{L}} \varphi\right\|_{L^\ell(\Omega)}
\leq C t^{-\gamma-\frac{N}{2}( \frac{1}{q}-\frac{1}{\ell} )   } e^{-\lambda t}\|\varphi\|_{L^q(\Omega)},\qquad\forall~ t>0
$$
for all $\varphi \in L^q(\Omega)$, and with some $\lambda>0$.\\
{\rm (ii)}~{\rm \cite[Theorem II 14.1]{F1969}} For all $\alpha , \beta , \delta \in \mathbb{R}$ satisfying $\sigma < \gamma < \delta $, there is $C > 0$ such that
$$\| \mathcal{L}^{\gamma}\varphi \|_{L^{  2 }(\Omega)}
\leq C \| \mathcal{L}^{\delta}\varphi \|_{L^{  2 }(\Omega)}^{ \frac{\gamma-\sigma}{\delta -\sigma} }
\| \mathcal{L}^{\sigma}\varphi \|_{L^{  2 }(\Omega)}^{ \frac{\delta-\gamma}{\delta -\sigma} }\qquad { for~ all}~ \varphi \in D( \mathcal{L}^{\delta} ).$$
\end{lemma}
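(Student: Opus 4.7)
The plan is to establish both parts of Lemma \ref{lem3.1} from the spectral decomposition of the self-adjoint $L^2$-realization of $\mathcal{L}$, combined with classical smoothing bounds for the Neumann heat semigroup on the smooth bounded domain $\Omega$. As preparation, I would note that $-\Delta$ with homogeneous Neumann boundary condition on $\Omega$ has compact resolvent, so the shifted operator $\mathcal{L}_2=-\Delta+1$ is positive self-adjoint on $L^2(\Omega)$ with discrete spectrum $1=\lambda_1\le\lambda_2\le\cdots\to\infty$ and an orthonormal eigenbasis $\{e_k\}$; the assertion that $e^{-t\mathcal{L}_q}\varphi$ is independent of $q$ on overlapping $L^q$ spaces follows from the consistency of the various $L^q$-realizations on a common core of smooth functions satisfying the Neumann condition.

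For part (i), I would first obtain the $L^2\to L^2$ case directly from spectral calculus, using the elementary bound $x^\gamma e^{-x}\le C_\gamma e^{-x/2}$ valid for all $x\ge 0$:
$$\|\mathcal{L}^\gamma e^{-t\mathcal{L}}\varphi\|_{L^2}^2=\sum_k\lambda_k^{2\gamma}e^{-2t\lambda_k}|\langle\varphi,e_k\rangle|^2\le C_\gamma t^{-2\gamma}e^{-t\lambda_1}\|\varphi\|_{L^2}^2,$$
so any $\lambda\in(0,\lambda_1)$ is admissible at the Hilbert level. To upgrade from this $L^2$-bound to the claimed $L^q\to L^\ell$ estimate, I would exploit the factorization
$$\mathcal{L}^\gamma e^{-t\mathcal{L}}=e^{-t\mathcal{L}/3}\circ\mathcal{L}^\gamma e^{-t\mathcal{L}/3}\circ e^{-t\mathcal{L}/3},$$
whose three factors commute as functions of $\mathcal{L}$. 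The two pure-semigroup factors are then controlled by the standard ultracontractivity estimates $\|e^{-s\mathcal{L}}\|_{L^q\to L^2}\le Cs^{-\frac{N}{2}(\frac{1}{q}-\frac{1}{2})}e^{-\lambda s}$ and $\|e^{-s\mathcal{L}}\|_{L^2\to L^\ell}\le Cs^{-\frac{N}{2}(\frac{1}{2}-\frac{1}{\ell})}e^{-\lambda s}$, while the central factor contributes the fractional smoothing $t^{-\gamma}$ just derived.

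For part (ii), the spectral representation reduces the assertion to a weighted Hölder inequality on Fourier coefficients. Writing $\varphi=\sum_k\hat\varphi_k e_k$ and setting $\theta:=(\delta-\gamma)/(\delta-\sigma)\in(0,1)$, the algebraic identity $2\gamma=2\sigma\theta+2\delta(1-\theta)$ yields $\lambda_k^{2\gamma}=\lambda_k^{2\sigma\theta}\cdot\lambda_k^{2\delta(1-\theta)}$. Applying Hölder's inequality on $\ell^1$ with conjugate exponents $1/\theta$ and $1/(1-\theta)$,
$$\sum_k\lambda_k^{2\gamma}|\hat\varphi_k|^2\le\Bigl(\sum_k\lambda_k^{2\sigma}|\hat\varphi_k|^2\Bigr)^{\theta}\Bigl(\sum_k\lambda_k^{2\delta}|\hat\varphi_k|^2\Bigr)^{1-\theta},$$
which is precisely $\|\mathcal{L}^\gamma\varphi\|_{L^2}^2\le\|\mathcal{L}^\sigma\varphi\|_{L^2}^{2\theta}\|\mathcal{L}^\delta\varphi\|_{L^2}^{2(1-\theta)}$; taking square roots yields the stated inequality with $C=1$.

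The genuinely nontrivial input is the ultracontractivity bound $\|e^{-s\mathcal{L}}\|_{L^q\to L^r}\le Cs^{-\frac{N}{2}(\frac{1}{q}-\frac{1}{r})}e^{-\lambda s}$ for the Neumann heat semigroup, since one cannot invoke an explicit Gaussian kernel on a bounded domain. I would obtain the short-time polynomial rate from Davies--Gaffney or Nash--Moser type upper Gaussian estimates for the Neumann heat kernel on smooth bounded domains, and the exponential long-time decay from the spectral gap supplied by the $+1$ shift. The remaining ingredients --- the spectral calculus, the elementary bound $x^\gamma e^{-x}\le C_\gamma e^{-x/2}$, and the semigroup factorization --- are routine.
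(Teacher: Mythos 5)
Your argument is essentially correct, but it is worth pointing out that the paper does not prove this lemma at all: both parts are simply quoted from the literature, (i) from Horstmann--Winkler \cite[page 56]{HW2005} and (ii) from Friedman \cite[Theorem II 14.1]{F1969}, where they are established in the general framework of sectorial operators and analytic semigroups. Your route is a self-contained spectral one, made possible by the fact that $\mathcal{L}=-\Delta+1$ with Neumann condition is positive self-adjoint with discrete spectrum bounded below by $1$: part (ii) then reduces, as you show, to H\"older's inequality on the eigenfunction coefficients (and indeed gives $C=1$, and works for all real $\sigma<\gamma<\delta$ since $\lambda_k\ge 1$), while part (i) follows from the factorization $\mathcal{L}^{\gamma}e^{-t\mathcal{L}}=e^{-t\mathcal{L}/3}\,\mathcal{L}^{\gamma}e^{-t\mathcal{L}/3}\,e^{-t\mathcal{L}/3}$, the spectral bound $\lambda^{\gamma}e^{-t\lambda}\le C_\gamma t^{-\gamma}e^{-t\lambda/2}$, and $L^{q}\to L^{2}$, $L^{2}\to L^{\ell}$ ultracontractivity; the exponents compose exactly to $t^{-\gamma-\frac{N}{2}(\frac1q-\frac1\ell)}e^{-\lambda t}$. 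What your approach buys is transparency and explicit constants; what the cited references buy is generality (no self-adjointness needed) and the dispatch of the two technical inputs you must still import or verify: the Gaussian (or Nash-type) upper bound for the Neumann heat kernel on a smooth bounded domain yielding the $L^{q}\to L^{r}$ smoothing rate with exponential decay coming from the $+1$ shift (including the endpoint cases $q=1$, $\ell=\infty$), and the consistency of the fractional powers $\mathcal{L}_q^{\gamma}$ across the $L^{p}$-scale, which you implicitly use when you interpret the middle factor via the $L^{2}$ functional calculus but measure the final result in $L^{\ell}$. Both points are standard, and you correctly flag the first; stating the second explicitly would make the argument complete.
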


\begin{lemma}\label{lem3.2}
Let $(n,c)$ be a weak and global solution of \eqref{q1} and
$g(x,t):= g(n(x,t),x,t) {\rm ~~in~} \Omega\times(0,\infty). $
There are pure constants $C=C(\verb"data")>0$ and $\kappa=\kappa(\verb"data")>0$ with the property that
if there holds that
\begin{align}\label{ugK}
\|\mathbf{u}(\cdot,t)\|_{L^\infty(\Omega)}<K,\quad\|g(\cdot,t)\|_{L^{m}(\Omega)}\le K,\qquad \forall~t>t_1
\end{align}
with some $K>1$, $m>N$ and $t_1>0$, then we can find $t_2>t_1$ such that
$$\|\nabla c(\cdot, t)\|_{L^{\infty }(\Omega)}\le C K^{\kappa}+C,\qquad \forall~t>t_2.$$
\end{lemma}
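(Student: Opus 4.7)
The goal is to bound $\|\nabla c(\cdot,t)\|_{L^\infty(\Omega)}$ by a power of $K$, given only $\|g(\cdot,t)\|_{L^m(\Omega)}\le K$ and $\|\mathbf{u}(\cdot,t)\|_{L^\infty(\Omega)}\le K$ for $t>t_1$. I would proceed in two stages: first derive a data-independent $L^m$-bound on $c$, then bootstrap through fractional powers of $\mathcal{L}$ via the variation-of-constants formula.

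\medskip
\textbf{Stage 1 ($L^m$-bound for $c$).} Because $\nabla\cdot\mathbf{u}=0$ in $\Omega$ and $\mathbf{u}\cdot\nu=0$ on $\pO$, the convection in \eqref{q1}$_2$ rewrites as $\mathbf{u}\cdot\nabla c=\nabla\cdot(c\mathbf{u})$, which integrates to zero against $c^{m-1}$. Testing \eqref{q1}$_2$ by $c^{m-1}$ therefore yields
\begin{equation*}
\frac{1}{m}\frac{d}{dt}\|c\|_{L^m}^m+(m-1)\int_\Omega c^{m-2}|\nabla c|^2\,dx+\|c\|_{L^m}^m\le\|g\|_{L^m}\|c\|_{L^m}^{m-1},
\end{equation*}
so $\frac{d}{dt}\|c\|_{L^m}\le -\|c\|_{L^m}+\|g\|_{L^m}$. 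Combining with $\|g(\cdot,t)\|_{L^m}\le K$ and Gronwall's inequality, one obtains some $t_2'>t_1$ (absorbing the initial transient) past which $\|c(\cdot,t)\|_{L^m(\Omega)}\le CK$.

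\medskip
\textbf{Stage 2 (bootstrap for $\nabla c$).} Since $m>N$, I would fix $\gamma\in\bigl(\tfrac{1}{2}+\tfrac{N}{2m},\,1\bigr)$, so that $D(\mathcal{L}^\gamma)\hookrightarrow W^{1,\infty}(\Omega)$ and in particular $\|\nabla\varphi\|_{L^\infty(\Omega)}\le C\|\mathcal{L}^\gamma\varphi\|_{L^m(\Omega)}$. Writing the variation-of-constants formula on a short window $[t-\delta,t]$,
\begin{equation*}
c(t)=e^{-\delta\mathcal{L}}c(t-\delta)+\int_{t-\delta}^{t}e^{-(t-s)\mathcal{L}}\bigl[g(s)-\tau\mathbf{u}(s)\cdot\nabla c(s)\bigr]\,ds,
\end{equation*}
then applying $\mathcal{L}^\gamma$, taking $L^m$-norms, and invoking Lemma~\ref{lem3.1}(i) together with Stage~1 gives
\begin{equation*}
\|\mathcal{L}^\gamma c(t)\|_{L^m}\le C\delta^{-\gamma}K+CK\delta^{1-\gamma}+CK\int_{t-\delta}^{t}(t-s)^{-\gamma}\|\nabla c(s)\|_{L^m}\,ds.
\end{equation*}
Using $\|\nabla c\|_{L^m}\le|\Omega|^{1/m}\|\nabla c\|_{L^\infty}\le C\|\mathcal{L}^\gamma c\|_{L^m}$ and setting $M(t):=\sup_{s\in[t-\delta,t]}\|\mathcal{L}^\gamma c(s)\|_{L^m}$, the last integral is at most $\tfrac{CK}{1-\gamma}\delta^{1-\gamma}M(t)$. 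Choosing $\delta=\delta(K)\sim K^{-1/(1-\gamma)}$ so that this prefactor is $\le\tfrac{1}{2}$ permits absorption, producing $M(t)\le CK\delta^{-\gamma}\le CK^{1/(1-\gamma)}$ for all $t\ge t_2:=t_2'+\delta$. The embedding then yields $\|\nabla c(\cdot,t)\|_{L^\infty(\Omega)}\le CK^\kappa+C$ with $\kappa=1/(1-\gamma)$ depending only on \texttt{data}.

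\medskip
\textbf{Main obstacle.} The delicate point is that $\mathbf{u}\cdot\nabla c$ involves the very gradient we wish to estimate, so applying the semigroup estimate over a window of order unity leaves a nonabsorbable coefficient $CK$ in front of $\|\nabla c\|_{L^m}$. The remedy is to shrink the window length $\delta$ to a negative power of $K$; this is only legitimate because Stage~1 produces an $L^m$-bound on $c$ of the form $CK$ that is \emph{free} of any $\nabla c$-term, and that in turn relies decisively on the divergence-free structure of $\mathbf{u}$ which annihilates the convection in the $L^m$-energy identity. The hypothesis $m>N$ is precisely what leaves room to select $\gamma<1$ while retaining the $W^{1,\infty}$-embedding needed to convert the $\mathcal{L}^\gamma$-bound into the desired $L^\infty$-bound on $\nabla c$.
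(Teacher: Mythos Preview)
Your approach is correct and takes a genuinely different route from the paper. The paper proceeds by an iterative bootstrap: an $L^2$ energy estimate (testing \eqref{q1}$_2$ with $c$ and then with $\mathcal{L}c$) first yields $\|\nabla c(\cdot,t)\|_{L^2}\le CK^3$ eventually; then repeated application of the variation-of-constants formula together with Lemma~\ref{lem3.1}(i) pushes $\|\mathcal{L}^{1/2}c\|_{L^{q_i}}$ through a geometric sequence $q_i=2\big(\tfrac{N}{N-1}\big)^{i-1}$ until $q_i>N$, after which one more step and the embedding $D(\mathcal{L}^{\hat\sigma}_{\hat\ell})\hookrightarrow W^{1,\infty}$ finishes. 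Crucially, at every stage the right-hand side involves only the \emph{previous} level $q_{i-1}<q_i$, so no self-reference or absorption is needed. Your scheme---an $L^m$ bound on $c$ via $c^{m-1}$-testing, then a single semigroup step with a $K$-dependent short window---is more direct and avoids the iteration in the integrability exponent, at the cost of the absorption trick with $\delta\sim K^{-1/(1-\gamma)}$.

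One point in your Stage~2 deserves care. Your inequality bounds $\|\mathcal{L}^\gamma c(t)\|_{L^m}$ by $CK\delta^{-\gamma}+\tfrac{1}{2}M(t)$ with $M(t)=\sup_{s\in[t-\delta,t]}\|\mathcal{L}^\gamma c(s)\|_{L^m}$, but taking the supremum of the left-hand side over $t\ge t_2'+\delta$ does \emph{not} reproduce the same quantity on the right: the windows $[t-\delta,t]$ reach back into $[t_2',t_2'+\delta)$, where you have no bound yet. The standard remedy is to slice time as $I_j:=[t_2'+j\delta,\,t_2'+(j+1)\delta]$, set $A_j:=\sup_{I_j}\|\mathcal{L}^\gamma c\|_{L^m}$, and observe that your estimate gives $A_{j+1}\le CK\delta^{-\gamma}+\tfrac{1}{2}\max(A_j,A_{j+1})$, hence $A_{j+1}\le 2CK\delta^{-\gamma}+\tfrac{1}{2}A_j$; iterating yields $A_j\le 4CK\delta^{-\gamma}+2^{-j}A_0$. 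So the claimed bound does hold, but only for $t\ge t_2$ with $t_2$ depending on the solution-dependent quantity $A_0$---which is exactly what the lemma allows. Your stated $t_2=t_2'+\delta$ is too early.
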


\begin{proof}
[\bf Proof.]
{\bf Step 1.}
We arbitrarily select constants $\sigma\in[1/2,1)$, $q,\ell>1$ such that
\begin{align}\label{si}
q< \ell,\qquad\sigma+\frac{N}{2q}-\frac{N}{2\ell}<1,\qquad \sigma+\frac{N}{2m}-\frac{N}{2\ell}<1.
\end{align}
This step is used to show that if $M_q(\bar{s})=\|\mathcal{L}^{ \frac{1}{2}} c\|_{L^\infty((\bar{s},\infty);L^q(\Omega))}<\infty$ with some $\bar{s}>t_1$,
then there exists a time point $\hat{s}>\bar{s}$ and
a constant $C(\ell,q)>0$ depending only on $\ell$, $q$, $\sigma$ as well as $\Omega$
ensuring
\begin{align}\label{step1}
\|\mathcal{L}^{\sigma} c(\cdot, t)\|_{L^{ \ell }(\Om)}\le C(\ell,q)K+ C(\ell,q) K M_q(\bar{s}),
\qquad\forall~ t >\hat{s}.
\end{align}

Rewriting $c$ according to the variation-of-constants formula, it follows that $\mbox{for any } t >\bar{s}$,
\begin{align}\label{st11}
\|\mathcal{L}^{\sigma} c(\cdot, t)\|_{L^{ \ell }(\Omega)}
\leq &
\left\|\mathcal{L}^{\sigma} e^{-\mathcal{L}(t-\bar{s})} c(\cdot,\bar{s})\right\|_{L^{ \ell }(\Omega)}
+\int_{\bar{s}}^{t} \left\|\mathcal{L}^{\sigma} e^{-\mathcal{L}(t-s)} \mathbf{u}\cdot\nabla c(\cdot, s) \right\|_{L^{ \ell }(\Omega)}\,ds
\nonumber\\
&
+\int_{\bar{s} }^{t}\left\|\mathcal{L}^{\sigma} e^{-\mathcal{L}(t-s)} g(\cdot,s) \right\|_{L^{ \ell }(\Omega)}\,ds.
\end{align}
We employ Lemma \ref{lem3.1} (i) to get
\begin{align}\label{st12}
\|\mathcal{L}^{\sigma} e^{- \mathcal{L}(t-\bar{s})} c(\cdot,\bar{s})\|_{L^{ \ell }(\Omega)}
 \le  C (t- \bar{s})^{-\sigma-\frac{N}{2}\big(\frac{2}{\ell+1}-\frac{1}{\ell}\big)} e^{-\lambda (t-\bar{s})}
\left\|c(\cdot,\bar{s}) \right\|_{L^{\frac{\ell+1}{2}}(\Omega)},
\qquad\forall~ t>\bar{s}.
\end{align}
This implies the existence of $\hat{s}>\bar{s}$ such that
\begin{align}\label{st13}
\|\mathcal{L}^{\sigma} e^{- \mathcal{L}(t-\bar{s})} c(\cdot,\bar{s})\|_{L^{ \ell }(\Omega)}
 \le  K,
\qquad\forall~ t>\hat{s}.
\end{align}
Utilizing Lemma \ref{lem3.1} (i) again and applying H\"{o}lder's inequality, we obtain by \eqref{ugK} that
\begin{align}
&\quad\int_{\bar{s}}^{t}\left\|\mathcal{L}^{\sigma} e^{-\mathcal{L}(t-s)}\mathbf{u}(\cdot, s)
\cdot \nabla c(\cdot, s)\right\|_{L^{ \ell }(\Omega)} \,ds \nonumber \\
&\le
C\int_{\bar{s}}^{t}
(t-s)^{-\sigma-\frac{N}{2}(\frac{1}{q} - \frac{1}{\ell}   ) } e^{-\lambda(t-s)}
\| \mathbf{u}(\cdot, s) \cdot \nabla c(\cdot, s) \|_{L^{ q  }(\Omega)} \,ds \nonumber \\
&\le
C\int_{\bar{s}}^{t}  (t-s)^{-\sigma-\frac{N}{2q}+\frac{N}{2\ell}} e^{-\lambda(t-s)}
\|\mathbf{u}(\cdot, s)\|_{L^{\infty}(\Omega)} \|\mathcal{L}^{\frac{1}{2}} c(\cdot, s) \|_{L^{ q }(\Omega)} \,ds
\nonumber \\
&\le C K M_q(\bar{s})
\qquad\mbox{for any } t >\bar{s}
\end{align}
because of \eqref{ugK} and \eqref{si}. Similarly, as above, there holds that
\begin{align}\label{st14}
\int_{ \bar{s} }^{t}\left\|\mathcal{L}^{\sigma} e^{-\mathcal{L}(t-s)}g(\cdot, s)\right\|_{L^{ \ell }(\Omega)} \,ds
&\le
C\int_{\bar{s}}^{t}
(t-s)^{-\sigma-\frac{N}{2}(  \frac{1}{m}- \frac{1}{\ell}  ) } e^{-\lambda (t-s)}
\| g(\cdot, s) \|_{L^{ m  }(\Omega)}  \,ds \nonumber \\
&\le C K
\qquad\mbox{for any } t >\bar{s}.
\end{align}
A combination of \eqref{st11}--\eqref{st14} directly leads to our desired estimate \eqref{step1}.

{\bf Step 2}.
By testing \eqref{q1}$_2$ with $ c$ and integrating by parts, we have
\begin{equation}\label{st21}
\begin{split}
\frac{1}{2}\frac{\mathrm{d}}{\mathrm{d}t} \int_{\Omega}c^{2}\,dx
 +\int_{\Omega}|\nabla c  |^{2}\,d x
 +\int_{\Omega}c^{2}\,d x
\leq \int_{\Omega} g c \,d x
\leq \frac{1}{2}\int_{\Omega}c^{2}\,dx+\frac{1}{2}\int_{\Omega}g^2\,dx,\qquad\forall~ t>0,
\end{split}
\end{equation}
where we used $\nabla\cdot \mathbf{u}=0$ in $\Omega\times(0,T)$
and $c_{\nu}=\mathbf{u}=0$ on $\partial\Omega\times(0,T)$.
In view of \eqref{ugK}, we deduce from H\"{o}lder's inequality that
\begin{equation}\label{st22}
\begin{split}
&\quad\frac{\mathrm{d}}{\mathrm{d} t} \int_{\Omega}c^{2} \,dx+\int_{\Omega}c^{2}\,dx
\leq K^2,\qquad\forall~ t>t_1.
\end{split}
\end{equation}
An argument of ODI (ordinary differential inequalities) applied to the above display ensures the existence $s_0>t_1$ such that
\begin{align}\label{st22'}
\|c(\cdot,t)\|^2_{L^2(\Omega)}\leq K^2+1,\qquad\forall~ t>s_0.
\end{align}
By testing \eqref{q1}$_2$ with $\mathcal{L} c$,
 we derive from Young's inequality that
\begin{align}\label{st23}
\frac{\mathrm{d}}{\mathrm{d}t} \int_{\Omega}|\mathcal{L}^{\frac{1}{2}} c|^2\,dx
 +\int_{\Omega}|\mathcal{L} c|^2\,dx
&\leq \int_{\Omega}\mathbf{u} \cdot \nabla c\mathcal{L}c\,dx
+\int_{\Omega} g\mathcal{L} c\,dx \nonumber\\
&\leq   \frac{1}{2}\|\mathcal{L}c\|^2_{L^{ 2  }(\Omega)}
+\|\mathbf{u}\|^2_{L^\infty(\Omega)} \| \nabla c\|_{L^{ 2  }(\Omega)}^2
+\|g\|^2_{L^{ 2 }(\Omega)},\qquad\forall~ t>s_0,
\end{align}
where Lemma \ref{lem3.1} (ii) along with \eqref{st22'} tells that
\begin{align*}
\|\nabla c \|^{ 2 }_{L^{ 2 }(\Omega)}
&\le C \|\mathcal{L}^{\frac{1}{2}} c  \|^{ 2 }_{L^{ 2 }(\Omega)}
\le
C \|\mathcal{L} c \|_{L^{  2 }(\Omega)}
\|c \|_{L^{  2 }(\Omega)}  \\
&\le
\frac{1}{4K^2}\|\mathcal{L} c \|^{ 2  }_{L^{  2 }(\Omega)}
+CK^2\|c \|^{ 2 }_{L^{ 2 }(\Omega)}
\le\frac{1}{4K^2}\|\mathcal{L} c \|^{ 2  }_{L^{  2 }(\Omega)}
+C K^4+CK^2.
\end{align*}
Summarizing the two above inequalities and referring back to \eqref{ugK} result in the following estimate,
\begin{equation}\label{u1mm}
\begin{split}
&\quad\frac{\mathrm{d}}{\mathrm{d} t} \int_{\Omega}|\mathcal{L}^{\frac{1}{2}} c|^2 \,dx
 +\frac{1}{4}\int_{\Omega}|\mathcal{L} c|^2\,dx
\leq CK^6+CK^4,\qquad \forall~t>s_0.
\end{split}
\end{equation}
By using arguments of ODI again, we can find $s_1>s_0$ fulfilling
$$\|\nabla c(\cdot, t)\|_{L^{ 2 }(\Omega)}\le C K^3+C K^2,\qquad \forall~ t>s_1. $$

{\bf Step 3}. In this step, we employ results obtained in the above two steps to achieve our final goal via 
an iterative process.
Let us take $\sigma=\frac{1}{2}$ in \eqref{si}, $q_i=2(\frac{N}{N-1})^{i-1}$ and $\ell_i=\frac{Nq_i}{N-1}$ with $i\in\mathbb{N}^+$.
By verifying
$$
\frac{1}{2}+\frac{N}{2q_i}-\frac{N}{2\ell_i}<1, \quad \frac{1}{2}+\frac{N}{2m}-\frac{N}{2\ell_i}<1
$$
thanks to the assumption $m>N$,
we can derive from \eqref{step1} that if $M_{q_i}(s_{i})<\infty$, then there exists $s_{i+1}>s_i$ such that
\begin{align}\label{529a1}
M_{q_{i+1}}(s_{i+1})=M_{\ell_i}(s_{i+1})=\|\mathcal{L}^{ \frac{1}{2} } c\|_{L^\infty((s_{i+1},\infty);L^{\ell_i}(\Omega))}<\infty. 
\end{align}
The bound for $\| \mathcal{L}^{\frac{1}{2}} c\|_{L^2(\Omega)}$ presented in {Step 2} serves as the starting point for our intended iteration. This combined with \eqref{529a1} guarantees the existence of increasing sequences $\{s_{i}\}_{i\in\mathbb{N}^+ }$, $\{C_{i}\}_{i\in\mathbb{N}^+ }$ and $\{\kappa_{i}\}_{i\in\mathbb{N}^+ }$ ensuring
\begin{align}\label{st31}
M_{q_i}(s_{i})=\|\mathcal{L}^{ \frac{1}{2} } c\|_{L^\infty((s_{i},\infty);L^{q_i}(\Omega))}
\leq C_{i}K^{\kappa_i}+C_{i}.
\end{align}
With arbitrarily fixing $\hat{q}=2(\frac{N}{N-1})^{i_0-1}>N$,
we further select $\hat{\ell}>\max\{\hat{q},m\}$ and take $\hat{\sigma}\in(0,1)$ satisfying 
$$\frac{N}{2\hat{\ell}} + \frac{1}{2}<\hat{\sigma}< \frac{N}{2\hat{\ell}}+1- \frac{N}{2\min\{\hat{q},m\}}.$$
These choices ensure that 
$$\hat{\sigma}+\frac{N}{2\hat{q}}-\frac{N}{2\hat{\ell}}<1,\quad \hat{\sigma}+\frac{N}{2m}-\frac{N}{2\hat{\ell}}<1,
\quad (2\hat{\sigma}-1)\hat{\ell}>N. $$
Since $M_{ \hat{q} }(s_{i_0})=\|\mathcal{L}^{ \frac{1}{2} } c\|_{L^\infty((s_{i_0},\infty);L^{\hat{q}}(\Omega))}
\leq  C_{i_0}K^{\kappa_{i_0}}+C_{i_0}$
by virtue of \eqref{st31},
we can apply \eqref{step1} again to conclude that
\begin{align}
\|\mathcal{L}^{\sigma} c(\cdot, t)\|_{L^{\hat{\ell}}(\Omega)}\le CK+ C K M_{ \hat{q} }(s_{i_0}),\qquad \forall ~t>t_2
\end{align}
with some $t_2>s_{i_0}$.
This in conjunction with the embedding $D(\mathcal{L}_{\hat{\ell}}^{\sigma})\hookrightarrow W^{1,\infty}(\Omega)$ clearly implies the desired estimate.
\end{proof}

\medskip

\begin{proof}
[\bf Proof of Theorem \ref{th4}.]
Based on Lemma \ref{lem3.2}, we can directly arrive at the desired estimates by
\eqref{th11} and \eqref{th21}.
\end{proof}

\section{Applications}

\label{sec4}

\subsection{Asymptotic stability in chemotaxis models}

{\bf Example (A)} Our first example elucidates the application of Theorem \ref{th4} in investigating
the large-time behaviours for a chemotaxis model involving nonlinear signal production, given as follows:
 \begin{eqnarray}\label{example1}
 \begin{cases}
    \begin{array}{llll}
       \displaystyle n_t=\nabla\cdot\big(\nabla n-n\nabla c\big), &&x\in\Omega,\,t>0,\\
        \displaystyle  c_{t}=\Delta c-c +n^{\sigma}, &&x\in\Omega,\,t>0,\\
        \displaystyle (n,c)|_{t=0}=(n_0,c_0),&&x\in\Omega
    \end{array}
 \end{cases}
\end{eqnarray}
under the homogenous Neumann boundary condition for both components.
The initial data $\left(n_0, c_0\right)$ fulfills
\begin{align}\label{1initial}
\left\{\begin{array}{l}
n_0 \in C^\omega(\overline{\Omega})~(0<\omega<1), ~n_0 \geq 0, ~n_0 \not \equiv 0 \quad \text {on } \overline{\Omega}, \\
c_0 \in W^{1, \infty}(\Omega), ~c_0 \geq 0  \quad \text {on } \overline{\Omega} .
\end{array}\right.
\end{align}

As a prerequisite for applying Theorem \ref{th4},
we need to prove that in the model \eqref{example1},
the $L^{q}$--norm of $g(x,t)= n^{\sigma}(x,t)$ becomes independent of the initial data after some time.
The proof of this part will be placed in the Appendix, and in the next proposition. We will directly utilize the result established in Lemma \ref{A1} to illustrate how we use Theorem \ref{th4} to achieve our final
goal of determining asymptotic stability.

\begin{proposition}
Let $\Omega\subseteq \Bbb{R}^N$ $(N\ge2)$ be a bounded domain with smooth boundary and $0<\sigma<2/N$.
Then there is $M_*>0$ only depending on $\Omega$, $N$ and $\sigma$ such that for any initial data $(n_0,c_0)$
satisfying \eqref{1initial} and $\dashint_{\Omega}n_0\,dx= M<M_*$, \eqref{example1} admits a classical and globally bounded solution $(n,c)$, which converges to $(M,M^{\sigma})$ in $L^\infty(\Omega)$ exponentially.
\end{proposition}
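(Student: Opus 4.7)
The strategy is to establish global boundedness via the appendix Lemma~A.1 and Theorem~\ref{th4}, then derive $L^2$-exponential decay for the deviations $(\tilde n,\tilde c):=(n-M,c-M^\sigma)$ through a weighted Lyapunov-functional argument exploiting the spectral stability of the linearization of \eqref{example1} at the equilibrium $(M,M^\sigma)$, and finally bootstrap to $L^\infty$-decay via parabolic regularity.

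Standard parabolic fixed-point theory yields a unique nonnegative classical solution $(n,c)$ on a maximal interval $[0,T_{\max})$, and integrating the first equation under the no-flux boundary condition gives mass conservation $\dashint_\Omega n(\cdot,t)\,dx\equiv M$. Lemma~A.1 supplies $\bar t_0>0$ and a constant $K=K(M)$ so that $\|n(\cdot,t)\|_{L^r(\Omega)}+\|n^\sigma(\cdot,t)\|_{L^m(\Omega)}\le K$ for all $t>\bar t_0$, with $m>N$ and $r$ admissible for Theorem~\ref{th4}. Theorem~\ref{th4} (applied with $\mathbf u\equiv 0$ and $K_f=1$, since $f\equiv 0$ trivially satisfies \eqref{Kf}) then produces $\bar T>\bar t_0$ with $\|n(\cdot,t)\|_{L^\infty(\Omega)}\le\Lambda(K+1)^\kappa+1=:\mathcal K(M)$ for $t>\bar T$. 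This excludes finite-time blow-up, so $T_{\max}=\infty$; a semigroup bootstrap in the spirit of Lemma~\ref{lem3.2} further yields $\|c(\cdot,t)\|_{W^{1,\infty}(\Omega)}\le\mathcal K_1(M)$ for $t$ large.

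Setting $\tilde n:=n-M$ (with $\int_\Omega\tilde n(\cdot,t)\,dx=0$) and $\tilde c:=c-M^\sigma$, these satisfy
\[
\tilde n_t=\Delta\tilde n-\nabla\cdot(n\nabla\tilde c),\qquad
\tilde c_t=\Delta\tilde c-\tilde c+(n^\sigma-M^\sigma).
\]
Testing respectively by $\tilde n$ and $-\Delta\tilde c$ and using Young's and Poincaré's inequalities produces
\begin{align*}
\tfrac{d}{dt}\|\tilde n\|_{L^2}^2+\mu_1\|\tilde n\|_{L^2}^2&\le\mathcal K(M)^2\,\|\nabla\tilde c\|_{L^2}^2,\\
\tfrac{d}{dt}\|\nabla\tilde c\|_{L^2}^2+2\|\nabla\tilde c\|_{L^2}^2&\le\|n^\sigma-M^\sigma\|_{L^2}^2,
\end{align*}
where $\mu_1>0$ is the first nonzero Neumann eigenvalue of $-\Delta$ on $\Omega$. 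Splitting $\Omega=\{n\ge M/2\}\cup\{n<M/2\}$, using the mean-value theorem on the first set together with the Chebyshev-type bound $|\{n<M/2\}|\le 4\|\tilde n\|_{L^2}^2/M^2$ on its complement, converts the naive Hölder bound $|a^\sigma-b^\sigma|\le|a-b|^\sigma$ into the Lipschitz-type estimate $\|n^\sigma-M^\sigma\|_{L^2}^2\le C\,M^{2\sigma-2}\,\|\tilde n\|_{L^2}^2$. With the weighted functional $\mathcal E(t):=\|\tilde n\|_{L^2}^2+A\|\nabla\tilde c\|_{L^2}^2$ for a suitable $A=A(M)>0$, one arrives at $\mathcal E'(t)\le-\lambda\,\mathcal E(t)$ for $t>\bar T$, provided $M<M_*$.

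The main obstacle is closing this ODI: because $\sigma<1$, the Lipschitz constant $\sigma M^{\sigma-1}$ of $s\mapsto s^\sigma$ at $s=M$ diverges as $M\to 0$, and naive quadratic absorption fails. The resolution is that the linearization matrix $\bigl(\begin{smallmatrix}-\mu_k&M\mu_k\\\sigma M^{\sigma-1}&-(\mu_k+1)\end{smallmatrix}\bigr)$ on each nontrivial Neumann eigenmode has determinant $\mu_k(\mu_k+1-\sigma M^\sigma)$, which is strictly positive precisely when $\sigma M^\sigma<\mu_1+1$; this defines the threshold $M_*$. Choosing the weight $A$ in $\mathcal E$ adapted to the associated stable eigenvector pairs the divergent entry $\sigma M^{\sigma-1}$ with the vanishing cross-diffusion coefficient $M$, preserving a positive exponential-decay rate $\lambda=\lambda(M)>0$ uniformly for $M<M_*$. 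The $L^2$-exponential decay of $(\tilde n,\nabla\tilde c)$ so obtained is finally upgraded to $L^\infty$-exponential decay of $(\tilde n,\tilde c)$ via the parabolic Hölder estimates of Theorem~\ref{th3} and standard Gagliardo--Nirenberg interpolation against the uniform bound, yielding the claimed exponential convergence to $(M,M^\sigma)$.
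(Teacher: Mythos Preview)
Your architecture is right and your spectral heuristic is the correct one, but the Lyapunov step does not close as written. In your first differential inequality the coefficient in front of $\|\nabla\tilde c\|_{L^2}^2$ is $\mathcal K(M)^2$, where $\mathcal K(M)=\Lambda(K+1)^\kappa+1$ comes from Theorem~\ref{th4}. This quantity does \emph{not} tend to $0$ as $M\to0$: Lemma~A.1 only gives $\|n(\cdot,t)\|_{L^q}\le CM^\kappa+C$, so $K$ and hence $\mathcal K(M)$ remain bounded below by a positive constant. For $\mathcal E=\|\tilde n\|_{L^2}^2+A\|\nabla\tilde c\|_{L^2}^2$ to satisfy $\mathcal E'\le-\lambda\mathcal E$ one needs simultaneously $A\gtrsim\mathcal K(M)^2$ and $A\lesssim M^{2-2\sigma}$, forcing $\mathcal K(M)^2 M^{2\sigma-2}$ to be small---impossible since the first factor is $O(1)$ and the second diverges. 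Your linearization matrix has $(1,2)$-entry $M\mu_k$ because at the equilibrium $n=M$; but in the nonlinear energy estimate the cross-term is governed by $\|n\|_{L^\infty}$, not $M$, and no choice of weight $A$ can compensate for this discrepancy.

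The paper fills exactly this gap by inserting a refinement step after the qualitative $W^{1,\infty}$ bound on $n$: testing \eqref{example1}$_1$ with $n^{\hat q-1}$ for some $\hat q>\frac{N(1-\sigma)}{\sigma}$ and interpolating via Gagliardo--Nirenberg against the conserved $L^1$-mass yields the \emph{quantitative} eventual bound $\|n(\cdot,t)\|_{L^\infty}\le C\big(M^{\hat q/(N+\hat q)}+M\big)$. The choice of $\hat q$ guarantees $\frac{2\hat q}{\hat q+N}>2-2\sigma$, so now $\|n\|_{L^\infty}^2\cdot M^{2\sigma-2}\to0$ as $M\to0$, and the coupled differential inequalities close for $M<M_*$. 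In short, mere boundedness of $\|n\|_{L^\infty}$ is not enough; you need its smallness in a power of $M$ that beats $1-\sigma$, and this requires the extra interpolation step. (A minor side issue: $f\equiv0$ violates hypothesis~\eqref{Sf}, so Theorem~\ref{th4} does not apply verbatim, though its proof goes through since only $f\le0$ for $s\ge K_f$ is actually used.)
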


\begin{proof}
[\bf Proof]
We begin the proof by assuming $M_*\le1$ and observing
\begin{align}\label{529a3}
\dashint_{\Omega}n(\cdot, t)\,dx\equiv \dashint_{\Omega}n_0 \,dx=M<M_*,\qquad \forall~t>0.
\end{align}
Invoking Lemma \ref{A1} with an arbitrarily selected number $q>\frac{N}{2}$,  one can find $t_1>0$ such that
\begin{align}\label{N}
\|n(\cdot,t)\|_{L^q(\Omega)}\leq C_1,\qquad \forall~t>t_1
\end{align}
with $C_1=C_1(\sigma,N,\Omega)>0$.
Lemma \ref{lem3.1} provides a time point $t_2>t_1$ ensuring
\begin{align}\label{ex0}
\|\nabla c(\cdot, t)\|_{L^{\infty }(\Omega)}\le C_2,\qquad \forall~t>t_2
\end{align}
with $C_2=C_2(\sigma,N,\Omega)>0$.
An application of Theorem \ref{th4} immediately gives an upper bound of $n$ only dependent on $\sigma$, $N$ and $\Omega$.
Having this upper bound at hand, we further exploit Theorem \ref{th3} to find a pure constant $C_3=C_3(\sigma, N,\Omega)>0$ fulfilling
\begin{align}\label{ex0'}
\|n(\cdot, t)\|_{W^{1,\infty}(\Omega)}\le C_3,\qquad \forall~t>t_2.
\end{align}

With \eqref{ex0} and \eqref{ex0'} explicitly exhibiting desired estimates,
 we proceed in the remaining portion to demonstrate how a smallness condition and
the mass conservation can ensure the asymptotic stability of the model.
Testing \eqref{example1}$_1$ by $n^{\hat{q}-1}$ with fixed $\hat{q}>\frac{N(1-\sigma)}{\sigma}$ and integrating by parts show that
\begin{align} \label{ex1}
\frac{1}{\hat{q}}\frac{\mathrm{d}}{\mathrm{d}t}\int_{\Omega}n^{\hat{q}}\,dx
+\frac{ 4(\hat{q}-1) }{ \hat{q}^2 }\int_{\Omega}|\nabla n^{\frac{\hat{q}}{2}} |^2\,dx  \notag
=& (\hat{q}-1)
\int_{\Omega}n^{\hat{q}-1}\nabla n\cdot \nabla c \,dx \notag \\
\leq& \frac{\hat{q}-1}{2}\int_{\Omega}n^{\hat{q}-2}|\nabla n|^2  \,dx
+
\frac{\hat{q}-1}{2}\int_{\Omega}n^{\hat{q}}|\nabla c|^2 \,dx,\qquad \forall~t>t_2.
\end{align}
The last integral can be estimated by \eqref{ex0} as follows,
\begin{align}
\int_{\Omega}n^{\hat{q}}|\nabla c|^2\,dx\le C^2_2\int_{\Omega}n^{\hat{q}}\,dx
= C^2_2\| n^{\frac{\hat{q}}{2}} \|^2_{L^2(\Omega)}.
\end{align}
Utilizing Young's inequality as well as the Gagliardo-Nirenberg inequality to govern the above display, we thereby obtain
\begin{align}\label{529a2}
 (C^2_2+1)\| n^{\frac{\hat{q}}{2}} \|^2_{L^2(\Omega)}
&\le C_3\Big(\|\nabla n^{\frac{\hat{q}}{2}} \|^{\theta}_{L^2(\Omega)}\| n^{\frac{\hat{q}}{2}}\|^{1-\theta}_{L^{\frac{2}{\hat{q}}}(\Omega)}
+\| n^{\frac{\hat{q}}{2}}\|_{L^{\frac{2}{\hat{q}}}(\Omega)}\Big)^2\nonumber\\
&\le \frac{2(\hat{q}-1) }{ \hat{q}^2   } \int_{\Omega}|\nabla n^{\frac{\hat{q}}{2}} |^2\,dx
+2\big(\hat{q}^{\frac{2}{\theta}}C_3\big)^{  \frac{1}{1-\theta} } (\hat{q}-1)^{-\frac{1}{\theta(1-\theta)}}
\| n^{\frac{\hat{q}}{2}}\|^2_{L^{\frac{2}{\hat{q}}}(\Omega)}
+2C_3\| n^{\frac{\hat{q}}{2}}\|^2_{L^{\frac{2}{\hat{q}}}(\Omega)}
\end{align}
with $C_3=C_3(\hat{q},N,\Omega)>0$ and
$$
\theta:=\frac{\frac{\hat{q}}{2}-\frac{1}{2}}{\frac{\hat{q}}{2}+\frac{1}{N}-\frac{1}{2}}.
$$
Thus, by taking $C_4=2\big(\hat{q}^{\frac{2}{\theta}}C_3\big)^{  \frac{1}{1-\theta} } (\hat{q}-1)^{-\frac{1}{\theta(1-\theta)}}+2C_3$, we have from \eqref{529a3} and \eqref{ex1}--\eqref{529a2} that
\begin{align*}
\frac{1}{\hat{q}}\frac{\mathrm{d}}{\mathrm{d} t}\int_{\Omega}n^{ \hat{q} } \,dx
+\int_{\Omega}n^{ \hat{q} }\,dx  \le C_4 M^{ \hat{q} }, \qquad\forall~ t>t_2.
\end{align*}
Applying a result of ODI, we can find $t_3>t_2$ such that
\begin{align*}
\int_{\Omega}n^{ \hat{q} } \,dx  \le 2\hat{q}C_4 M^{ \hat{q} }, \qquad\forall~ t>t_3.
\end{align*}
By invoking the Gagliardo-Nirenberg inequality again and using \eqref{ex0'} as well as \eqref{529a3}, it follows that
\begin{align}\label{ex2}
\|n(\cdot,t)\|_{L^\infty(\Omega)}\le C_5\|n(\cdot,t)\|_{W^{1,\infty}} ^{ \frac{N}{N+\hat{q}}  }
\|n(\cdot,t)\|_{L^{ \hat{q}  }(\Omega)} ^{\frac{\hat{q}}{N+\hat{q}}}
 +C_5\|n(\cdot,t)\|_{L^{  \hat{q} }(\Omega)}\le C_6(M^{\frac{\hat{q}}{N+\hat{q}}}+M),\qquad\forall~ t>t_3
\end{align}
with $C_5=C_5(\hat{q},N,\Omega)>0$, $C_6=C_6(\hat{q},N,\sigma,\Omega)>0$.
Multiplying \eqref{example1}$_1$ with $n-M$ and integrating by parts, we get
\begin{align}\label{ex3}
\frac{1}{2}\frac{\mathrm{d}}{\mathrm{d} t}\int_{\Omega}(n-M)^2\,dx+\int_{\Omega}|\nabla n|^2\,dx  \notag
=& -\int_{\Omega}n\nabla n\cdot \nabla c\,dx \notag \\
\leq& \frac{1}{2}\int_{\Omega}|\nabla n|^2\,dx +
 \frac{1}{2}\int_{\Omega}n^{2}|\nabla c|^2\,dx, \qquad\forall~ t>t_3,
\end{align}
  where the Poincar\'{e} inequality indicates
  $$\int_{\Omega}|\nabla n|^2 \,dx\ge C_p\int_{\Omega}|n-M|^2 \,dx.$$
To estimate the last factor in (\ref{ex3}) we majorise the integrand by \eqref{ex2},
  \begin{align*}
\int_{\Omega} n^2|\nabla c|^2\,dx
\le \|n(\cdot,t)\|_{L^\infty(\Omega)}^2\int_{\Omega}|\nabla c|^2\,dx
\le 4C_6^2 \big(M^{\frac{2\hat{q}}{\hat{q}+N} }+M^2\big)\int_{\Omega}|\nabla c|^2\,dx,\qquad\forall~ t>t_3.
  \end{align*}
We combine the above three inequalities and obtain
     \begin{align}\label{ex3'}
   \frac{\mathrm{d}}{\mathrm{d} t}\int_{\Omega} (n-M)^2\,dx+C_p\int_{\Omega}(n-M)^2\,dx
   \leq 4C_6^2 \big(M^{\frac{2\hat{q}}{\hat{q}+N} }+M^2\big)\int_{\Omega} |\nabla c|^2\,dx ,  \qquad\forall~ t>t_3.
  \end{align}
Testing \eqref{example1}$_2$ with $c-M^{\sigma}$, integrating by parts and utilizing Young's inequality, we have
\begin{align}\label{z91}
\frac{\mathrm{d}}{\mathrm{d} t}
\int_\Omega (c-M^{\sigma})^2\,dx
+2\int_\Omega|\nabla c|^2\,dx+\int_\Omega(c-M^{\sigma})^2\,dx
&\le\int_\Omega (n^\sigma-M^{\sigma})^2 \,dx, \qquad\forall~ t>t_3.
\end{align}
For the point $(\bar{x}, \bar{t}) \in \Omega \times(0, \infty)$ with $n(\bar{x}, \bar{t}) \leq \frac{M}{2}$,
it can be verified that
$$
\begin{aligned}
\left|n^\sigma-M^\sigma\right|& \leq\left|n-M\right|^{\sigma}=\left|n-M\right|^{\sigma-1}\left|n-M\right|\leq\left(\frac{M}{2}\right)^{\sigma-1}\left|n-M\right| .
\end{aligned}
$$
For the point $(\bar{x}, \bar{t}) \in \Omega \times(0, \infty)$ satisfying $n(\bar{x}, \bar{t})>\frac{M}{2}$, it follows by the mean value theorem that
$$
\begin{aligned}
\left|n^\sigma-M^\sigma\right|  =\left|\tilde{h}\left(n\right)-\tilde{h}\left(M\right)\right|\leq \tilde{h}^{\prime}\left(n-\chi n+\chi M\right)\left|n-M\right|
\end{aligned}
$$
for some $\theta \in(0,1)$ with $\tilde{h}(s):=s^{\sigma}$ on $\left(\frac{M}{4}, \infty\right)$. Notice the monotonic decreasing property of $\tilde{h}^{\prime}(s)=\sigma s^{\sigma-1}$ and the fact $n-\theta n+\theta M>\frac{M}{2}$ ensured by $n(\bar{x}, \bar{t})>\frac{M}{2}$.
We have
$$
\left|n^\sigma-M^\sigma\right|\leq\sigma \Big(\frac{M}{2}\Big)^{\sigma-1}\left|n-M\right|.
$$
Thus, by \eqref{ex3}, there holds that
\begin{align}\label{ex4}
\frac{\mathrm{d}}{\mathrm{d} t}\int_\Omega (c-M^{\sigma})^2\,dx
+2\int_\Omega|\nabla c|^2\mathrm{d}x+\int_\Omega(c-M^{\sigma})^2\,dx
\le\left(\frac{M}{2}\right)^{2\sigma-2}\int_\Omega (n-M)^{2} \,dx, \quad\forall~ t>t_3.
\end{align}
We derive from \eqref{ex3'} and \eqref{ex4} that
\begin{align*}
&\frac{\mathrm{d}}{\mathrm{d} t}\left(\int_{\Omega}(n-M)^2\,dx
+2C_6^2 \big(M^{\frac{2\hat{q}}{\hat{q}+N} }+M^{2} \big) \int_\Omega (c-M^{\sigma})^2\,dx\right)\\
&\quad+C_p\int_{\Omega}(n-M)^2\,dx
+2C_6^2 \big(M^{\frac{2\hat{q}}{\hat{q}+N} }+M^{2} \big) \int_\Omega(c-M^{\sigma})^2\,dx \\
&\le
2C_6^2\big(M^{ \frac{2\hat{q}}{\hat{q}+N} } +M^{2} \big)\left(\frac{M}{2}\right)^{2\sigma-2}\int_\Omega (n-M)^{2} \,dx, \qquad\forall~ t>t_3.
\end{align*}
Since the choice $\hat{q}>\frac{N(1-\sigma)}{\sigma}$ ensures $ \frac{2\hat{q}}{\hat{q}+N} >2-2\sigma$, we can find a positive number $M_*\leq 1$ such that
 $$ 4C_6^2 \big(M_{*}^{\frac{2\hat{q}}{\hat{q}+N} }+M_{*}^2\big) \left(\frac{M_{*}}{2}\right)^{2\sigma-2}\le \frac{C_p}{2}.$$
 Then for any $t>t_3$,
\begin{align*}
&\frac{\mathrm{d}}{\mathrm{d} t}\left(\int_{\Omega}(n-M)^2\,dx
+2C_6^2 \big(M^{\frac{2\hat{q}}{\hat{q}+N} }+M\big) \int_\Omega (c-M^{\sigma})^2\,dx\right)\\
&\quad+\frac{C_p}{4}\int_{\Omega}(n-M)^2\,dx
+2C_6^2 \big(M^{\frac{2\hat{q}}{\hat{q}+N} }+M\big) \int_\Omega(c-M^{\sigma})^2\,dx \leq 0,
\end{align*}
provided $M\le M_{*}$. An application of the Gr\"{o}nwall inequality infers
 $$
\left\|n(\cdot, t)-M\right\|_{L^2(\Omega)}^2+\left\|c(\cdot, t)-M^{\sigma}\right\|_{L^2(\Omega)}^2
\leq C_6 e^{-C_7\left(t-t_3\right)}, \qquad t>t_3
$$
with $C_6,C_7>0$ depending on $M,\sigma,\Omega,N$. Combine this with the Gagliardo-Nirenberg inequality to get
\begin{align}\label{ex6}
\left\|n(\cdot, t)-M\right\|_{L^{\infty}(\Omega)} &
 \leq C_8\|\nabla n(\cdot, t)\|_{L^{\infty}(\Omega)}^{\frac{N}{N+2}}
 \left\|n(\cdot, t)-M\right\|_{L^2(\Omega)}^{\frac{2}{N+2}}
 +C_8\left\|n(\cdot, t)-M\right\|_{L^2(\Omega)} \nonumber\\
& \leq C_9
\left(e^{-\frac{C_7\left(t-t_3\right)}{N+2}}+e^{-\frac{C_7\left(t-t_3\right)}{2}}\right), \qquad t>t_3+2
\end{align}
with $C_8=C_8(\Omega,N)$, $C_9=C_9(M,\sigma,\Omega,N)>0$.
Carrying out the same arguments as resulting in \eqref{ex6}, we also can derive
$$
\left\|c(\cdot, t)-M^{\sigma}\right\|_{L^{\infty}(\Omega)} \leq C_{10} e^{-\frac{C_{7}\left(t-t_3\right)}{N+2}}, \qquad t>t_3+2
$$
for some $C_{10}=C_{10}(M,\sigma,\Omega,N)>0$.
This along with \eqref{ex6} completes the proof.
\end{proof}

\noindent{\bf Example (B)}
The second example is concerned with the constant equilibrium for a chemotaxis-Navier-Stokes model under the two-dimensional setting:
 \begin{eqnarray}\label{example2}
 \begin{cases}
    \begin{array}{llll}
       \displaystyle n_t+\mathbf{u}\cdot\nabla n=\nabla\cdot\big(D(n)\nabla n-S(n)\nabla c\big)+rn-\mu n^{1+\gamma}, &&x\in\Omega,\,t>0,\\
        \displaystyle  c_{t}+\mathbf{u}\cdot\nabla c=\Delta c-c +n, &&x\in\Omega,\,t>0,\\
           \displaystyle  \mathbf{u}_{t}+\mathbf{u}\cdot\nabla \mathbf{u}=\Delta \mathbf{u}+\nabla P +n\nabla\Phi, &&x\in\Omega,\,t>0,\\
        \displaystyle \nabla \mathbf{u}=0, &&x\in\partial\Omega,\,t>0,\\
        \displaystyle (n,c,\mathbf{u})|_{t=0}=(n_0,c_0,\mathbf{u}_0),&&x\in\Omega,
    \end{array}
 \end{cases}
\end{eqnarray}
where species density $n$ as well as nutrient concentration $c$ has no-flux boundary conditions on $\partial\Omega$, and the fluid $\mathbf{u}$ fulfills the homogenous Dirichlet condition. The initial data $\left(n_0, c_0, \mathbf{u}_0\right)$ satisfies
\begin{align}\label{2initial}
\left\{\begin{array}{l}
n_0 \in C^\omega(\overline{\Omega})~(0<\omega<1), ~n_0 \geq 0, ~n_0 \not \equiv 0  \quad \text {on } \overline{\Omega}, \\
c_0 \in W^{1, \infty}(\Omega), ~c_0 \geq 0 \quad \text {on } \overline{\Omega},\\
\mathbf{u}_0 \in L^\infty\left(\Omega ; \mathbb{R}^2\right) \text { and } \nabla \cdot \mathbf{u}_0=0 \quad \text {in } \mathcal{D}^{\prime}(\Omega).
\end{array}\right.
\end{align}
The nonlinearities $D,S\in C^2([0,\infty))$ satisfy
\begin{align}\label{cDS}
D(n)\ge a_0 (n+1)^{\alpha},\qquad
0\le S(n)\le b_0 n(n+1)^{\beta-1}
\end{align}
with $\alpha,\gamma\ge0$, $\beta\in\Bbb{R}$ and $a_0,b_0>0$.
On basis of the energy development of $(n,c,\mathbf{u})$ as exhibited in Lemma \ref{A2},
we use the following proposition to establish the large-time behaviours of solutions.
\begin{proposition}
Let $\Omega\subseteq \Bbb{R}^2$ be a bounded domain with smooth boundary and $\gamma>1$. Assume that $D$ and $S$ satisfy \eqref{cDS} with $\alpha\ge 0$ and $\beta\leq\gamma$. Then there is $\mu^*=\mu^*(a_0,b_0,\alpha,\beta,r,N,\Omega)>0$ such that whenever $\mu>\mu^*$, for any initial data $(n_0,c_0,\mathbf{u}_0)$ with \eqref{2initial}  the system \eqref{example2} possesses a globally bounded and classical solution $(n,c,\mathbf{u})$ converging to $\left((\frac{r}{\mu})^{\frac{1}{\gamma}},(\frac{r}{\mu})^{\frac{1}{\gamma}},0\right)$
in $L^\infty(\Omega)$ exponentially.
\end{proposition}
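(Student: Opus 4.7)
The strategy is to parallel Example (A) but with the additional fluid component and the stronger logistic power $1+\gamma$, $\gamma>1$. Denote the candidate equilibrium by $n_\infty := (r/\mu)^{1/\gamma}$, so that $rn_\infty - \mu n_\infty^{1+\gamma} = 0$; substitution into the stationary $c$-equation forces $c_\infty = n_\infty$, and $\mathbf{u}_\infty = 0$. A first observation is that the level $K_f$ in \eqref{Kf} equals exactly $n_\infty$, and therefore shrinks to $0$ as $\mu \to \infty$. The plan is: (i) use the appendix lemma cited in the text (Lemma \ref{A2}) together with Lemma \ref{lem3.2} and Theorem \ref{th4} to produce an initial-data-independent $L^\infty$-bound $M$ on $n$; (ii) invoke Theorem \ref{th3} to get Hölder/gradient control of $n$, as well as standard Stokes regularization for $\mathbf u$; (iii) set up a Lyapunov functional for the perturbation $(n-n_\infty, c-c_\infty, \mathbf u)$ and show it decays exponentially when $\mu$ is large enough that $M$ is below an explicit threshold.

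For step (iii), I would test the first equation with $n-n_\infty$, the second with $c-c_\infty$ and the third with $\mathbf u$. Since $\nabla\cdot\mathbf u = 0$ with the boundary conditions $\partial_\nu n = \partial_\nu c = 0$ and $\mathbf u|_{\partial\Omega}=0$, all three transport terms integrate to zero. The key damping is provided by a convexity bound: for $\gamma > 1$, writing $n(n^\gamma - n_\infty^\gamma) = (n^{1+\gamma} - n_\infty^{1+\gamma}) - n_\infty^\gamma(n-n_\infty)$ and using the tangent-line inequality $n^{1+\gamma} - n_\infty^{1+\gamma} \ge (1+\gamma)n_\infty^\gamma(n-n_\infty)$ together with $\mu n_\infty^\gamma = r$, one obtains
\[
\int_\Omega (rn - \mu n^{1+\gamma})(n-n_\infty)\,dx \le -\gamma r \int_\Omega (n-n_\infty)^2\,dx.
\]
The cross-diffusion $\int S(n)\nabla n\cdot \nabla c\,dx$ is absorbed by Young using $D(n)\ge a_0$ and the bound $|S(n)|\le b_0 M(M+1)^{\beta-1}$; the buoyancy $\int n \nabla\Phi\cdot\mathbf u\,dx$ is absorbed by Young together with Poincaré for $\mathbf u$ (no-slip). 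Combining with the $c$-equation estimate $\tfrac{d}{dt}\int(c-c_\infty)^2 + 2\int|\nabla c|^2 + \int(c-c_\infty)^2 \le \int(n-n_\infty)^2$, and choosing weights $A,B>0$ appropriately, I expect
\[
\frac{d}{dt}\mathcal{F}(t) + \lambda\mathcal{F}(t) \le 0, \qquad
\mathcal{F}(t) := \int_\Omega(n-n_\infty)^2\,dx + A\!\int_\Omega(c-c_\infty)^2\,dx + B\!\int_\Omega|\mathbf u|^2\,dx,
\]
whenever the constant produced by the cross-diffusion Young estimate, proportional to $M^{2(1+\beta_+)}$, is strictly less than $\gamma r$. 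Finally, the passage from $L^2$-exponential decay of $\mathcal F$ to $L^\infty$-exponential decay of $(n,c,\mathbf u)$ follows verbatim from the Gagliardo--Nirenberg interpolation argument at the end of Example (A), using the uniform $W^{1,\infty}$ control of $n$ obtained from Theorem \ref{th3}, the $L^\infty$ bound on $\nabla c$ from Lemma \ref{lem3.2}, and Stokes regularity for $\mathbf u$.

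The main obstacle is quantifying the $\mu$-dependence of $M$. The damping rate $\gamma r$ is fixed once $r,\gamma$ are fixed, so exponential stability can only be forced by making $M$ small, which in turn requires making $\mu$ large. Tracing through Lemma \ref{A2} (which supplies the $L^{r_0}$-bound $K$ used in Theorem \ref{th4}) one expects $K$ to decay like a negative power of $\mu$ because of the enhanced $\mu n^{1+\gamma}$ damping — this is exactly where the hypothesis $\beta\le \gamma$ plays its role, allowing the cross-diffusion term to be Young-absorbed against $\mu n^{1+\gamma}$ in the $L^{r_0}$-energy inequality. Combined with $K_f = (r/\mu)^{1/\gamma}\to 0$, Theorem \ref{th4} then gives $M = M(\mu)\to 0$ as $\mu\to\infty$, and $\mu^*$ is defined implicitly by the requirement that the Young constant for the cross-diffusion fall below $\gamma r/2$. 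The rest of the proof is a book-keeping exercise closely modeled on Example~(A).
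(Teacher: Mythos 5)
Your overall skeleton (a priori bounds from Lemma \ref{A2}, Theorem \ref{th4} and Theorem \ref{th3}, then a Lyapunov functional, then a Gagliardo--Nirenberg upgrade to $L^\infty$) matches the paper, but the core damping estimate in your step (iii) is false. Writing $n_\infty=(r/\mu)^{1/\gamma}$, the tangent-line inequality gives $n(n^\gamma-n_\infty^\gamma)\ge \gamma n_\infty^\gamma(n-n_\infty)$, and multiplying by $(n-n_\infty)$ preserves the inequality only on $\{n\ge n_\infty\}$; on $\{n<n_\infty\}$ it reverses. Concretely, at $n=0$ the integrand $(rn-\mu n^{1+\gamma})(n-n_\infty)$ equals $0$ while $-\gamma r(n-n_\infty)^2=-\gamma r\,n_\infty^2<0$, so your displayed inequality cannot hold pointwise and there is no constant $c>0$ with $n(n^\gamma-n_\infty^\gamma)(n-n_\infty)\ge c(n-n_\infty)^2$ for all $n\ge0$. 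Since no positive lower bound on $n$ is available, the plain $L^2$ functional $\int(n-n_\infty)^2$ receives no damping from the logistic term on the set where $n$ is small. The paper avoids this by using the entropy $H(n)=n-\chi-\chi\ln(\chi^{-1}n)$ with $\chi=n_\infty$: its derivative $1-\chi/n$ supplies the extra factor $1/n$, so the logistic contribution becomes $-\mu\int(n-\chi)(n^\gamma-\chi^\gamma)\le-\tfrac{r}{\chi}\int(n-\chi)^2$, an inequality valid for \emph{all} $n\ge0$ when $\gamma>1$, and only afterwards is $H$ locally compared with $(n-\chi)^2$ (via \eqref{ex21}) once $n(\cdot,t)\to\chi$ uniformly.

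Your smallness mechanism is also not available. By \eqref{Kf} one has $K_f>1$ by definition, and Theorem \ref{th4} yields $\|n\|_{L^\infty}\le\Lambda(K+1)^\kappa+K_f\ge\Lambda+1$; likewise the $L^2$ bound from Lemma \ref{A2} does not vanish as $\mu\to\infty$. Hence the uniform bound $M=M(\mu)$ does \emph{not} tend to $0$, and the condition ``cross-diffusion Young constant $\propto M^{2(1+\beta_+)}<\gamma r$'' cannot be met by enlarging $\mu$. The paper's largeness condition is of a different nature: the entropy weights the cross-diffusion term by the prefactor $\chi\to0$ while the damping rate $r/\chi=r^{(\gamma-1)/\gamma}\mu^{1/\gamma}\to\infty$, so $\chi C_3<r/\chi$ holds for $\mu$ large even though $C_3$ (which involves the non-vanishing $L^\infty$ bound of $n$) stays bounded. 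Your treatment of the fluid term $\int(n-n_\infty)\nabla\Phi\cdot\mathbf u\,dx$ and of the transport terms is fine, but without replacing the $L^2$ functional by the logarithmic entropy (or otherwise securing a uniform positive lower bound for $n$), the argument does not close.
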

\begin{proof}
[\bf Proof]
According to Lemma \ref{A2}, we identify a number $\mu_*>0$ such that the condition $\mu>\mu_*$ guarantees the global
classical solvability to \eqref{2initial} and ensures the existence of a time point $t_1>0$ fulfilling
 \begin{align}\label{N'}
\|n(\cdot,t)\|_{L^2(\Omega)}\le C_1,\qquad\forall~ t>t_1
\end{align}
with $C_1>0$ depending only on $a_0,b_0,\alpha,\beta,\gamma,r,N$ and $\Omega$.
Furthermore, Theorem \ref{th4} provides such $C_2>0$ and $t_2>t_1$ that
 \begin{align}\label{N'}
\|n(\cdot,t)\|_{L^\infty(\Omega)}\le C_2+C_2\left(\frac{r}{\mu}\right)^{\frac{1}{\gamma}},\qquad \forall~t>t_2,
\end{align}
as long as $\mu>\mu_*$.
  Let us define $\chi:=\big(\frac{r}{\mu}\big)^{\frac{1}{\gamma}}$ and
$$
H(s):=s-\chi-\chi \ln \left(\chi^{-1} s\right)~ \text { for } s>0 .
$$
By the Taylor expansion, there exists $\theta=\theta(s,\chi) \in(0,1)$ such that
$$
\begin{aligned}
H(s) =H(\chi)+H^{\prime}(\chi)(s-\chi)+\frac{H^{\prime \prime}\big(\theta s+(1-\theta) \chi\big)}{2}(s-\chi)^2=\frac{\chi}{2(\theta s+(1-\theta) \chi)^2}(s-\chi)^2
\end{aligned}
$$
for all $s>0$. Thus, it clearly holds that $H(s) \geq 0$ and
\begin{align}\label{ex21}
\lim _{s \rightarrow \chi} \frac{H(s)}{(s-\chi)^2}
=\lim _{s \rightarrow \chi} \frac{\chi}{2(\theta s+(1-\theta) \chi)^2}=\frac{1}{2 \chi} .
\end{align}

We have by \eqref{example2}$_1$ and \eqref{cDS} that
\begin{align}\label{ex22}
\frac{\mathrm{d}}{\mathrm{d} t}\int_{\Omega}H\big(n(x,t)\big)\,dx
=&-\chi \int_\Omega
\frac{D(n)|\nabla n|^2}{n^2}\,dx
+\chi\int_\Omega
\frac{S(n)\nabla c \cdot\nabla n}{n^2}\,dx-\mu\int_\Omega
(n-\chi)\Big(n^{\gamma}-\frac{r}{\mu}\Big)\,dx\nonumber\\
\le &-\frac{\chi a_0}{2}\int_\Omega
\frac{|\nabla n|^2}{n^2}\,dx
+\frac{\chi b^2_0}{2a_0}\int_\Omega
(n+1)^{2\beta-\alpha-2 }|\nabla c|^2\,dx \nonumber\\
&-\mu\int_\Omega
(n-\chi)\Big(n^{\gamma}-\frac{r}{\mu}\Big)\,dx,\qquad \forall~t>t_2.
\end{align}
It can be checked by simple calculations that
\begin{align*}
(n-\chi)
\Big(n^{\gamma}-\frac{r}{\mu}\Big)=(n-\chi)(n^{\gamma}-\chi^{\gamma})
\ge\chi^{\gamma-1}(n-\chi)^2,
\end{align*}
and hence
\begin{align*}
-\mu\int_\Omega (n-\chi)\Big(n^{\gamma}-\frac{r}{\mu}\Big)\,dx
\le-\mu\chi^{\gamma-1}\int_\Omega(n-\chi)^2\,dx=-\frac{r}{\chi}\int_\Omega(n-\chi)^2\,dx.
\end{align*}
Substituting this to \eqref{ex22} yields
\begin{align}\label{ex23}
\frac{\mathrm{d}}{\mathrm{d} t}\int_{\Omega}H\big(n(x,t)\big)\,dx
\le \chi C_3\int_\Omega|\nabla c|^2\,dx
&-\frac{r}{\chi}\int_\Omega(n-\chi)^2\,dx,\qquad \forall~t>t_2
\end{align}
with $C_3:=\frac{ b^2_0}{2a_0}\big((C_2+C_2\chi+1)^{2\beta-\alpha-2 }+1\big)$. By \eqref{example2}$_2$,
\begin{align}\label{ex24}
\frac{\mathrm{d}}{\mathrm{d} t}\int_\Omega (c-\chi)^2\,dx
\le-2\int_\Omega|\nabla c|^2\,dx-\int_\Omega(c-\chi)^2\,dx+\int_\Omega(n-\chi)^2\,dx,
\qquad \forall~t>t_2.
\end{align}
By virtue of \eqref{ex23} and \eqref{ex24}, one sees that
\begin{align}\label{ex25}
&\quad\frac{\mathrm{d}}{\mathrm{d} t}\left(\int_{\Omega}H\big(n(x,t)\big)\,dx
+\chi C_3\int_\Omega (c-\chi)^2\,dx\right)
+\chi C_3\int_\Omega(c-\chi)^2\,dx\nonumber\\
&\le \chi C_3\int_\Omega(n-\chi)^2\,dx
-\frac{r}{\chi}\int_\Omega(n-\chi)^2\,dx,\qquad \forall~t>t_2.
\end{align}
Clearly, there is $\mu^*>\mu_*$ such that
$$\frac{r}{\chi}\ge\frac{r}{  \Big (\frac{r}{\mu^* }\Big)^{\frac{1}{\gamma}}   }
>\Big(\frac{r}{\mu^* }\Big)^{\frac{1}{\gamma}} C_3\ge \chi C_3,$$
whenever $\mu\geq\mu^*$. An integration of \eqref{ex25} w.r.t. the time variable gives
$$
\int_{t_2+1}^{\infty} \int_{\Omega}(n-\chi)^2 \,dxdt<\infty,
$$
which combined with the Schauder estimate of $n$ displayed in Theorem \ref{th3} guarantees that
$$
n(\cdot, t) \longrightarrow \chi \quad \text {in } ~L^{\infty}(\Omega)~ \text { as } ~t \longrightarrow \infty .
$$
Thus by recalling \eqref{ex21}, we can find $t_3>t_2+1$ ensuring
$$
\frac{1}{4 \chi}(n(\cdot, t)-\chi)^2 \leq H(n(\cdot, t)) \leq \frac{1}{\chi}(n(\cdot, t)-\chi)^2,\qquad \forall~t>t_3 .
$$
We let $C_4=\min\{1\, ,\, r-\chi^2 C_3 \}$ and infer from \eqref{ex25} that
$$
\frac{\mathrm{d}}{\mathrm{d} t}\left(\int_{\Omega}H\big(n(x,t)\big)\,dx
+\chi C_3\int_\Omega (c-\chi)^2\,dx\right)
+C_4\left(\int_{\Omega}H\big(n(x,t)\big)\,dx+\chi C_3\int_\Omega (c-\chi)^2\,dx\right) \leq 0,
\qquad \forall~t>t_3 .
$$
The Gr\"{o}nwall inequality yields
\begin{align*}
&\quad\int_{\Omega}H\big(n(x,t)\big)\,dx+\chi C_3\int_\Omega \big(c(x,t)-\chi\big)^2\,dx\\
&\leq \left(\int_{\Omega}H\big(n(x,t_3)\big)\,dx+\chi C_3\int_\Omega\big (c(x,t_3)-\chi\big)^2\,dx\right)
 e^{-C_4\left(t-t_3\right)}, \qquad\forall~ t>t_3 .
\end{align*}
In view of this and \eqref{ex21}, we obtain the decaying estimate
of $n(\cdot, t)-\chi$ in $L^2(\Omega)$ as below,
$$
\left\|n(\cdot, t)-\left(\frac{r}{\mu}\right)^{\frac{1}{\gamma}}\right\|_{L^2(\Omega)}^2+\left\|c(\cdot, t)-\left(\frac{r}{\mu}\right)^{\frac{1}{\gamma}}\right\|_{L^2(\Omega)}^2
\leq C_5 e^{-C_4\left(t-t_3\right)}, \qquad \forall~t>t_3
$$
with $C_5>0$.
Using the arguments as leading to \eqref{ex6}, we finally conclude
$$(n,c,\mathbf{u})\rightarrow\left(\left(\frac{r}{\mu}\right)^{\frac{1}{\gamma}}, \left(\frac{r}{\mu}\right)^{\frac{1}{\gamma}},0\right)\quad \text{in } L^\infty(\Omega)$$
 exponentially as $t\rightarrow\infty$, as intended.
\end{proof}
 \subsection{Higher regularity of solutions}

The H\"{o}lder continuity of solutions, typically serving as the beginning in the investigation of smooth regularities, 
stands as one of the most important directions for the study of PDEs.
It not only aids in improving the regularity of solutions but also offers compactness for certain problems.

 In the context of chemotaxis models, the H\"{o}lder continuity w.r.t the spatial variable can be utilized to pursue the asymptotic behaviors of solutions pointwise. Specifically, by verifying the validity of the embedding $C^{\gamma'}(\overline{\Omega})\hookrightarrow W^{\gamma,\ell}(\Omega)$ for all $\ell\in(1,\infty)$ and every pair of $\gamma,\gamma'\in(0,1)$ with $\gamma'>\gamma$,
 we then employ the Gagliardo-Nirenberg inequality to show that for any $\ell>\frac{N}{\gamma}$ and $q>1$,
\begin{align*}
\|n(\cdot,t)\|_{L^\infty(\Omega)}
&\le C_1\left(\|n(\cdot,t)\|^{\frac{\frac{1}{q}}{\frac{1}{q}+\frac{\gamma}{N}-\frac{1}{\ell}}}_{W^{\gamma,\ell}(\Omega)}
\|n(\cdot,t)\|^{   \frac{\frac{\gamma}{N}-\frac{1}{\ell}}{\frac{1}{q}+\frac{\gamma}{N}-\frac{1}{\ell}}  }_{L^q(\Omega)}
+\|n(\cdot,t)\|_{L^q(\Omega)}\right)\\
&\le C_2\left(\|n(\cdot,t)\|_{C^{\gamma}(\overline{\Omega})}
^{\frac{\frac{1}{q}}{\frac{1}{q}+\frac{\gamma}{N}-\frac{1}{\ell}}}
\|n(\cdot,t)\|_{L^q(\Omega)}
^{   \frac{\frac{\gamma}{N}-\frac{1}{\ell}}{\frac{1}{q}+\frac{\gamma}{N}-\frac{1}{\ell}}  }
+\|n(\cdot,t)\|_{L^q(\Omega)}\right)
\end{align*}
with $C_1=C_1(\ell,q,\gamma,N,\Omega)>0$ and $C_2=C_2(\ell,q,\gamma,\gamma',N,\Omega)>0$. 
An application of this inequality readily transforms the convergence in the $L^q$-norm into that in the $L^\infty$-norm.
\\[2pt]
\noindent{\bf Example (C)}
Our third example, derived from modeling the invasion movements of cancer cells,
describes the phenomena of chemotaxis and haptotaxis:
\begin{align}\label{example3}
\begin{cases}
n_t=\nabla \cdot\left(|\nabla n|^{p-2} \nabla n\right)- \nabla \cdot(\chi n \nabla c)-\nabla \cdot(\xi n \nabla w)+\mu n(1-n-w), &  x \in \Omega, t>0, \\ c_t=\Delta c-c+n, & x \in \Omega, t>0,  \\
w_t=-c w, & x \in \Omega, t>0
\end{cases}
\end{align}
endowed on the boundary condition $\left(|\nabla n|^{p-2} \nabla n-\chi n \nabla c-\xi n \nabla w\right) \cdot \nu=\partial_\nu c
=0$ on $\partial \Omega \times(0, \infty)$.
The non-negative initial data $\left(n_0, c_0, w_0\right)$ fulfill
\begin{align}\label{3initial}
\left\{\begin{array}{l}
n_0 \in L^{\infty}(\Omega),\left|\nabla n_0\right|^{p-2} \nabla n_0 \in L^2(\Omega), c_0, w_0 \in W^{2, \infty}(\Omega), \\
\partial_\nu\big(|\nabla n_0|^{p-2} \nabla n_0\big)\Big|_{\partial \Omega}
=\partial_\nu c_0\big|_{\partial \Omega}=\partial_\nu w_0\big|_{\partial \Omega}=0 .
\end{array}\right.
\end{align}
The next proposition states that the system \eqref{example3} admits a global H\"{o}lder-continuous solution.
\begin{proposition}\label{pro3}
 Let $\Omega \subseteq \mathbb{R}^N$ $(N \geq 2)$ be a bounded domain with a smooth boundary.
 Suppose $\chi>0$, $\xi>0$ and $\mu>0$.
 Assume that
 $$
p>1+\frac{N(2 N+3 \lambda)}{(N+1)(N+2 \lambda)} \quad \text { with  } \lambda=\frac{2 N}{(N-2)_{+}} .
$$
Then for any non-negative initial data $\left(n_0, c_0, w_0\right)$ satisfying \eqref{3initial},
there exist functions
$$
\left\{\begin{array}{l}
n \in L^{\infty}(\Omega \times(0, \infty))\cap C^{\gamma,\frac{\gamma}{p}}(\overline{\Omega}\times(0,\infty))
\text{ with some } \gamma\in(0,1), \\
c \in  C^{0}\left([0, \infty) ; W^{1,\infty}(\Omega)\right) \cap C^0(\overline{\Omega} \times[0, \infty))
\cap C^{2+\sigma,1+\frac{\sigma }{2} }(\overline{\Omega}  \times(0, \infty)), \\
w \in  C^{0}\big([0, \infty) ; W^{1,\infty}(\Omega) \big)
\end{array}\right.
$$
such that the triple $(n, c, w)$ forms a global solution of \eqref{example3} in the sense
that $n$ weakly satisfies \eqref{example3}$_1$,
and $c$ as well as $w$ classically satisfies \eqref{example3}$_2$ and \eqref{example3}$_3$.
\end{proposition}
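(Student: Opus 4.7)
The plan is to construct the solution through a standard regularization of the $p$-Laplacian, derive uniform bounds for the regularized problem, invoke Theorems \ref{th2}--\ref{th3} to get the boundedness and H\"older continuity of $n$, and then extract a subsequence limit. Concretely, I would first replace $|\nabla n|^{p-2}$ by $\big(|\nabla n|^2+\varepsilon\big)^{(p-2)/2}$ and use standard quasilinear parabolic theory to obtain, for every $\varepsilon>0$, a classical solution $(n_\varepsilon,c_\varepsilon,w_\varepsilon)$ on a maximal interval $(0,T_{\max,\varepsilon})$, with $n_\varepsilon\ge 0$, $w_\varepsilon\ge 0$ and $0\le w_\varepsilon\le \|w_0\|_{L^\infty(\Omega)}$ coming from integrating the ODE $w_t=-cw$ pointwise.

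The heart of the matter is to obtain $\varepsilon$-uniform estimates on $(0,\infty)$. The natural starting point is the $L^1$-bound $\int_\Omega n_\varepsilon\,dx\le C$ from the logistic damping. I would then test the $n$-equation with $n_\varepsilon^{q-1}$ to derive an ODI of the form
\begin{align*}
\tfrac{1}{q}\tfrac{d}{dt}\!\int_\Omega n_\varepsilon^{q}\,dx+c_1\!\int_\Omega n_\varepsilon^{q-2}|\nabla n_\varepsilon|^{p}\,dx+\mu\!\int_\Omega n_\varepsilon^{q+1}\,dx \le c_2\!\int_\Omega n_\varepsilon^{q}\big(|\nabla c_\varepsilon|+|\nabla w_\varepsilon|\big)^{p/(p-1)}\,dx+\mu\!\int_\Omega n_\varepsilon^{q}\,dx,
\end{align*}
and, together with standard smoothing estimates for the $c$-equation and the identity $\nabla w_\varepsilon(\cdot,t)=\nabla w_0 \, e^{-\int_0^t c_\varepsilon}-w_\varepsilon\int_0^t e^{\cdots}\nabla c_\varepsilon$, bootstrap from $L^1$ up to some $L^r$-norm with $r>N+2$. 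This is exactly the step where the precise condition on $p$ is used: the coupling between the Gagliardo--Nirenberg exponents from the $p$-Laplacian dissipation and the Sobolev embedding for the signal $c$ requires $p>1+\tfrac{N(2N+3\lambda)}{(N+1)(N+2\lambda)}$ with $\lambda=\tfrac{2N}{(N-2)_+}$ in order that the iterative bootstrap closes and produces a uniform-in-time estimate $\|n_\varepsilon(\cdot,t)\|_{L^r(\Omega)}\le C$ for some $r>N+2$. This single bootstrap argument is the main obstacle of the proof, since the haptotaxis term $-\xi\nabla\cdot(n\nabla w)$ has to be absorbed either by enlarging the drift term in the framework of \eqref{q1} (treating $\chi c+\xi w$ as an effective signal) or by first controlling $\nabla w_\varepsilon$ in terms of $\nabla c_\varepsilon$ through the ODE satisfied by $\nabla w_\varepsilon$.

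Once $n_\varepsilon\in L^\infty_t L^r_x$ for $r>N+2$, Remark \ref{r3} (or a direct semigroup argument for the $c$-equation) yields $\|\nabla c_\varepsilon\|_{L^\infty(\Omega\times(t_0,\infty))}\le C$ for every $t_0>0$, and hence the ODE $\partial_t\nabla w_\varepsilon=-w_\varepsilon\nabla c_\varepsilon-c_\varepsilon\nabla w_\varepsilon$ gives a uniform control on $\nabla w_\varepsilon$. At this point the structural hypotheses \eqref{Sa}--\eqref{Sf} of the present paper are in force with $a_0=1$, $\alpha=0$, $\beta=1$, $b$ replaced by the effective flux $n_\varepsilon\nabla(\chi c_\varepsilon+\xi w_\varepsilon)$, and $f=\mu n(1-n-w)$, so \eqref{thm2-1} holds and Theorem \ref{th2} applies on every cylinder $\Omega\times(t_0,\hat{T})$, giving a uniform $L^\infty$-bound for $n_\varepsilon$. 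Noting the Lipschitz dependence on $(s,x,t)$ required in \eqref{Hola}--\eqref{Holb} is trivially satisfied for the present choices of $a$ and $b$, Theorem \ref{th3} then yields $n_\varepsilon\in C^{\gamma,\gamma/p}$ uniformly in $\varepsilon$ on compact subsets of $\overline{\Omega}\times(0,\infty)$.

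Finally, I would pass to the limit $\varepsilon\to 0$ using the compactness provided by the uniform H\"older bound of $n_\varepsilon$, together with standard parabolic regularity for $c_\varepsilon$ (which upgrades $c_\varepsilon$ to $C^{2+\sigma,1+\sigma/2}$) and the explicit representation of $w_\varepsilon$ in terms of $c_\varepsilon$. Monotonicity of the $p$-Laplacian and weak convergence of $(|\nabla n_\varepsilon|^2+\varepsilon)^{(p-2)/2}\nabla n_\varepsilon$ (via Minty's trick) ensure that the limit $n$ weakly satisfies \eqref{example3}$_1$, while $c$ and $w$ satisfy their equations in the classical sense. The global H\"older continuity of the limit $n$ is inherited from the uniform estimate.
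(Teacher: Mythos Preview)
Your overall strategy is sound and in fact considerably more self-contained than the paper's proof, which simply imports the existence and the uniform bound
\[
\|n_\varepsilon\|_{L^\infty(\Omega)}+\|c_\varepsilon\|_{W^{1,\infty}(\Omega)}+\|w_\varepsilon\|_{W^{1,\infty}(\Omega)}\le C
\]
from \cite{WCZ2023} (their Lemmas 8 and 11), and then invokes Theorem \ref{th3} twice: first to get $\varepsilon$-uniform H\"older estimates for $n_\varepsilon$ (hence Arzel\`a--Ascoli compactness in $C^0$), and a second time to the limit $n$ after rewriting \eqref{example3}$_1$ with the effective signal $\hat c=\chi c+\xi w$. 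Your bootstrap sketch is essentially a reconstruction of what \cite{WCZ2023} already proves; it is not wrong, just redundant for the purposes of this proposition.

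There is one concrete slip. With $\alpha=0$, $\beta=1$ and any $p>2$ (which is forced by the assumption on $p$), one has
\[
\max\Bigl\{\tfrac{p(\beta+\alpha_-)}{p-1},\,2+\alpha_-\Bigr\}=\max\Bigl\{\tfrac{p}{p-1},\,2\Bigr\}=2<\tfrac{p(N+2)}{N},
\]
so \eqref{thm2-1} is \emph{false} and \eqref{thm1-1} holds instead. Thus Theorem \ref{th2} does not apply; moreover, invoking it would be circular since it requires $n$ bounded as a hypothesis. The correct reference is Theorem \ref{th1}, which delivers the $L^\infty$ bound from the $L^{p(N+2)/N}$ norm once your bootstrap has reached that exponent (or one simply quotes \cite{WCZ2023} as the paper does). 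With this correction the remainder of your argument --- uniform H\"older continuity via Theorem \ref{th3}, parabolic Schauder for $c$, the explicit ODE representation for $w$, and Minty's trick for the limit in the $p$-Laplacian --- goes through.
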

\begin{proof}
[\bf Proof]
The weak and globally bounded solutions have been found in \cite[Theorem 1]{WCZ2023}.
Additionally, to determine the solvability by compactness arguments, 
the authors \cite{WCZ2023} replaced $\nabla \cdot\left(|\nabla n|^{p-2} \nabla n\right)$
with $\nabla \cdot\left((|\nabla n|+\varepsilon)^{p-2} \nabla n\right)$ in \eqref{example3}$_1$,
and then introduced a regularized system
\begin{align}\label{app}
\begin{cases}
n_t=\nabla \cdot\left((|\nabla n|+\varepsilon)^{p-2} \nabla n\right)- \nabla \cdot(\chi n \nabla c)-\nabla \cdot(\xi n \nabla w)+\mu n(1-n-w), &  x \in \Omega, t>0, \\ c_t=\Delta c-c+n, & x \in \Omega, t>0,  \\
w_t=-c w, & x \in \Omega, t>0
\end{cases}
\end{align}
 which possesses a global and classical solution $(n_\varepsilon,c_\varepsilon,w_\varepsilon) $ for every $\varepsilon\in(0,1)$.
According to \cite[Lemmas 8$\&$11]{WCZ2023}, for any $T>0$,
there exists an $\varepsilon$-independent constant $C_1$ such that
\begin{align}\label{3ex1}
\left\|n_{\varepsilon}\right\|_{L^{\infty}(\Omega)}
+\left\|c_{\varepsilon}\right\|_{W^{1, \infty}(\Omega)}+\left\|w_{\varepsilon}\right\|_{W^{1, \infty}(\Omega)} \leq C_1, \qquad \forall~t>0.
\end{align}
In light of Theorem \ref{th3}, we have that for any $t_0\in(0,T)$,
\begin{align}\label{3ex2}
|n_\varepsilon(x, t)  - n_\varepsilon(\hat{x}, \hat{t})|
\leq C_2\Big( |x-\hat{x}|+|t-\hat{t}|^{\frac{1}{p}} \Big)^{\gamma},
\qquad\forall~ (x,t),~(\hat{x},\hat{t})\in \overline{\Omega}\times[t_0,T),
\end{align}
where $C_2>0$ and $\gamma\in(0,1)$ are unform-in-$\varepsilon$ due to \eqref{3ex1}.
With the aid of the Arzel\`{a}–Ascoli theorem, we thus conclude the limit function $n$ of a subsequence of $\{n_\varepsilon\}_{\varepsilon\in(0,1)}$, as specified in
\cite[page 16]{WCZ2023},
belongs to the space $ C^{0}([t_0,T];L^{2}(\Omega))$.

Finally, let us set $f(s,x,t)=\mu s(1-s-w(x,t))$, $\hat{c}(x,t)=\chi c-\xi w $ for $(x,t)\in\overline{\Omega}\times(0,\infty)$.
Under this transformation, \cite[Theorem 1]{WCZ2023} tells us that $n$ weakly satisfies $$n_t=\nabla \cdot\left(|\nabla n|^{p-2} \nabla n\right)- \nabla \cdot( n \nabla \hat{c})+f(n,x,t),\qquad \forall~x\in\Omega,~ t>0$$
with $\nabla \hat{c}\in L^\infty(\Omega\times[t_0,T))$.
Hence, the expected H\"{o}lder continuity result immediately follows by an application of Theorem \ref{th3}.
\end{proof}
\noindent{\bf Example (D)}
Finally, let us consider the following chemotaxis with $p-$Laplacian involved:
\begin{align}\label{example4}
\begin{cases}n_t+\mathbf{u} \cdot \nabla n=\nabla \cdot\left(|\nabla n|^{p-2} \nabla n\right)-\nabla \cdot(n \nabla c), & x \in \Omega, t>0, \\
 c_t+\mathbf{u} \cdot \nabla c=\Delta c-n c, & x \in \Omega, t>0, \\
 \mathbf{u}_t=\Delta \mathbf{u}+\nabla P+n \nabla \Phi, & x \in \Omega, t>0, \\
 \nabla \cdot \mathbf{u}=0, & x \in \Omega, t>0,\end{cases}
\end{align}
where $n$ and $c$ satisfy homogeneous boundary conditions of Neumann type,
and $\mathbf{u}$ satisfies the homogeneous boundary condition of Dirichlet type.
The initial data fulfill
$$
\left\{\begin{array}{l}
n_0 \in C^\omega(\overline{\Omega}) ~(0<\omega<1), ~n_0 \geq 0, ~n_0 \not \equiv 0 \quad \text {on } \overline{\Omega}, \\
c_0 \in W^{1, \infty}(\Omega), ~c_0 \geq 0, ~ \sqrt{c_0} \in W^{1,2}(\Omega), \\
\mathbf{u}_0 \in D\left(\mathcal{A}_2^\gamma\right) \quad \text {for some } \gamma \in(\frac{3}{4}, 1).
\end{array}\right.
$$
\begin{proposition}\label{pro4}
Let $\Omega\subseteq \Bbb{R}^3$ be a bounded domain with smooth boundary, $\Phi \in W^{2, \infty}(\Omega)$ and $p>\frac{23}{11}$.
Then there exist functions
$$
\left\{\begin{array}{l}
n \in L^{\infty}(\Omega \times(0, \infty))\cap C^{\gamma,\frac{\gamma}{p}}(\overline{\Omega}\times(0,\infty))
\text{ with some } \gamma\in(0,1), \\
c \in  C^{0}\left([0, \infty) ; W^{1,\infty}(\Omega)\right) \cap C^0(\overline{\Omega} \times[0, \infty))
\cap C^{ 2+\sigma,1+\frac{\sigma }{2} }(\overline{\Omega}  \times(0, \infty)), \\
\mathbf{u }\in L^{\infty}\left(\Omega \times(0, \infty) ; \mathbb{R}^3\right)
\cap L_{l o c}^2\big([0, \infty) ;W_0^{1,2}(\Omega ; \mathbb{R}^3) \cap L_\sigma^2(\Omega ; \mathbb{R}^3)\big)
\cap C^0\left(\overline{\Omega}  \times[0, \infty) ; \mathbb{R}^3\right)
\end{array}\right.
$$
such that the triple $(n, c, \mathbf{u})$ forms a global solution of \eqref{example4} in the sense
that $c$ classically satisfies \eqref{example4}$_2$,
and $n$ together with $\mathbf{u}$ weakly satisfies \eqref{example4}$_1$ and \eqref{example4}$_3$.
\end{proposition}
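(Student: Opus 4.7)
The plan is to mirror the strategy used for Proposition \ref{pro3}: introduce a standard non-degenerate regularization of \eqref{example4}$_1$ by replacing $|\nabla n|^{p-2}\nabla n$ with $(|\nabla n_\varepsilon|+\varepsilon)^{p-2}\nabla n_\varepsilon$, obtain an $\varepsilon$-independent globally bounded classical solution $(n_\varepsilon,c_\varepsilon,\mathbf{u}_\varepsilon)$, upgrade its regularity to $\varepsilon$-uniform H\"older continuity by invoking Theorem \ref{th3}, and then extract a subsequential limit which, thanks to the uniform H\"older estimate, will inherit enough compactness to pass to the limit in the weak formulation \eqref{De1}--\eqref{De2}. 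The threshold $p>\frac{23}{11}$ will enter only in the first step (to secure uniform $L^\infty$ bounds); the Schauder-type step is structural and uses nothing more than the boundedness already established.

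First, for each $\varepsilon\in(0,1)$ one constructs a global classical solution of the regularized system by a fixed-point/Galerkin argument as in standard chemotaxis-Stokes references. The crucial $\varepsilon$-uniform estimates are the following: the consumption mechanism yields $\|c_\varepsilon(\cdot,t)\|_{L^\infty(\Omega)}\le\|c_0\|_{L^\infty(\Omega)}$ by the maximum principle; an entropy identity of Chertock--Kiselev--Ryzhik type combined with a Stokes energy inequality and the hypothesis $p>\frac{23}{11}$ gives a uniform-in-$\varepsilon$ estimate $\|n_\varepsilon(\cdot,t)\|_{L^\infty(\Omega)}+\|\nabla c_\varepsilon(\cdot,t)\|_{L^\infty(\Omega)}+\|\mathbf{u}_\varepsilon(\cdot,t)\|_{L^\infty(\Omega)}\le C$ on $[0,\infty)$. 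Here I would cite the existing construction of globally bounded solutions to this $p$-Laplacian chemotaxis-Stokes system (which is precisely the motivation for the exponent restriction).

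With these uniform $L^\infty$ bounds in hand, Theorem \ref{th3} applies directly to $n_\varepsilon$: the regularized nonlinear diffusion $\bar a_\varepsilon(s,x,t)=(|\xi|+\varepsilon)^{p-2}$ satisfies \eqref{struc}--\eqref{Holb} with H\"older constants independent of $\varepsilon$ (since we only use the fixed structural exponents), the cross-diffusion coefficient $b(s)=s$ is globally Lipschitz, the velocity $\mathbf{u}_\varepsilon$ is uniformly bounded, and $\nabla c_\varepsilon$ is uniformly bounded. Therefore Theorem \ref{th3} yields $\gamma\in(0,1)$ and $C>0$, neither depending on $\varepsilon$, with
\begin{equation*}
|n_\varepsilon(x,t)-n_\varepsilon(\hat x,\hat t)|\le C\bigl(|x-\hat x|+|t-\hat t|^{1/p}\bigr)^{\gamma},\qquad (x,t),(\hat x,\hat t)\in\overline{\Omega}\times[t_0,T],
\end{equation*}
for any $0<t_0<T<\infty$. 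The Arzel\`a--Ascoli theorem then provides, along a diagonal subsequence, a limit $n\in C^{\gamma,\gamma/p}(\overline{\Omega}\times(0,\infty))\cap L^\infty(\Omega\times(0,\infty))$; for $c_\varepsilon$ and $\mathbf{u}_\varepsilon$ one uses the standard parabolic/Stokes Schauder theory on the linear equations \eqref{example4}$_2$ and \eqref{example4}$_3$ (with the already bounded source terms $n_\varepsilon c_\varepsilon$ and $n_\varepsilon\nabla\Phi$) to get the stated $C^0([0,\infty);W^{1,\infty})\cap C^{2+\sigma,1+\sigma/2}$ regularity of $c$ and the continuity/integrability properties of $\mathbf{u}$.

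Finally, passing to the limit $\varepsilon\to 0$ in \eqref{De1}--\eqref{De2} for $(n_\varepsilon,c_\varepsilon,\mathbf{u}_\varepsilon)$ requires almost-everywhere convergence of $\nabla n_\varepsilon$, which one obtains from the $L^p$-monotonicity of the $p$-Laplacian via the standard Minty-type argument in e.g. \cite{TL2019}; then the integral identities survive passage to the limit since $n_\varepsilon\to n$ uniformly on compact sets and $\nabla c_\varepsilon, \mathbf{u}_\varepsilon$ are uniformly bounded. The main obstacle in the whole scheme is the uniform $L^\infty$-estimate of $n_\varepsilon$ (equivalently, the justification of the restriction $p>\frac{23}{11}$), which must be produced independently of Theorem \ref{th4} because here $g=-nc$ is a \emph{consumption} term not conforming to the sign/structure used in the asymptotic-stability examples; once this is secured, the application of Theorem \ref{th3} to close the regularity loop is essentially automatic.
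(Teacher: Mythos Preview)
Your proposal is correct and follows essentially the same route as the paper, which simply cites \cite{TL2020} for the construction of globally bounded weak solutions and then refers back to the argument of Proposition~\ref{pro3} (regularize, apply Theorem~\ref{th3} for $\varepsilon$-uniform H\"older bounds, extract the limit via Arzel\`a--Ascoli). One small slip worth fixing: the regularized coefficient $(|\xi|+\varepsilon)^{p-2}$ is not of the separated form $\bar a(s,x,t)|\xi|^{p-2}$ required by \eqref{struc}, but this is immaterial because the $C^{\gamma,\gamma/p}$ conclusion of Theorem~\ref{th3} uses only the DiBenedetto structure bounds and not \eqref{struc}--\eqref{Holb} (alternatively, as in the final paragraph of the proof of Proposition~\ref{pro3}, one can apply Theorem~\ref{th3} directly to the limit equation, where $\bar a\equiv 1$).
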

\begin{proof}
[\bf Proof]
The global existence of bounded and weak solutions can be found in \cite{TL2020}.
Similar reasonings performed in Proposition \ref{pro3} can also
be carried out for the model \eqref{example4}, resulting in improved regularities
of solutions. To avoid repeated arguments, we omit details here.
\end{proof}

\section{Appendix}

We use the coming Lemma \ref{Asemigroup} and Lemma \ref{semigroup} to prepare some semigroup estimates, which 
can be found in \cite{CL}.

Define $D(\mathcal{A}_q):=W^{2,q}\left(\Omega ; \mathbb{R}^2\right) \cap W_0^{1,q}\left(\Omega ; \mathbb{R}^2\right) \cap L_\sigma^q(\Omega)$ with $q\ge1$. We let $\mathcal{A}:=-\mathcal{P} \Delta$ denote the realization of the Stokes operator on $D(\mathcal{A}_q)$. Therein, $\mathcal{P}$ stands for the Helmholtz projection from $L^q\left(\Omega ; \mathbb{R}^2\right)$ to $L_\sigma^q(\Omega)$.
As a sectorial and positive self-adjoint operator, $\mathcal{A}_q$ possesses closed fractional powers $\mathcal{A}_q^\theta$ defined on $D\left(\mathcal{A}_q^\theta\right)$ with $\theta\in \mathbb{R}$, where the norm is given by $\|\cdot\|_{D\left(\mathcal{A}_q^\theta\right)}:=\left\|\mathcal{A}_q^\theta(\cdot)\right\|_{L^q(\Omega)}$.
\begin{lemma}\label{Asemigroup}
Let $\Omega\subseteq\mathbb{R}^2$ be a bounded domain with smooth boundary, and let
 $\left\{e^{-\mathcal{A} t}\right\}_{t \geq 0}$ be the semigroup generated by the Stokes-operator $\mathcal{A}$.
Then there exists $\lambda_1>0$ with the following property:
For each $\theta\ge0$ and any $1< \ell\leq q< \infty$
there is $C>0$ depending only on $\theta$, $q$, $\ell$ and $\Omega$ such that
$$
\left\|\mathcal{A}^{ \theta } e^{-t \mathcal{A}} \varphi\right\|_{L^q(\Omega)} \leq C t^{-\theta-(\frac{1}{\ell}-\frac{1}{q})}e^{-\lambda_1 t}\|\varphi\|_{L^\ell(\Omega)}, \qquad \forall~ t>0
$$
is valid for all $\varphi \in L_\sigma^\ell(\Omega)$.
\end{lemma}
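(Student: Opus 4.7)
The plan is to combine three standard ingredients from the theory of the Stokes operator on a bounded planar domain: (i) generation of an analytic semigroup on $L_\sigma^\ell(\Omega)$ together with exponential decay coming from the positivity of the spectral bound; (ii) the standard analytic-semigroup smoothing estimate involving fractional powers; and (iii) a Sobolev-type embedding for the domains of fractional powers of $\mathcal{A}$. Once these are in place, the desired inequality follows by chaining them together.

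First, I would recall that on a bounded smooth domain $\Omega\subseteq\mathbb{R}^2$ the operator $\mathcal{A}_\ell:=-\mathcal{P}\Delta$ with Dirichlet boundary conditions is sectorial on $L_\sigma^\ell(\Omega)$ for every $1<\ell<\infty$ and generates a bounded analytic semigroup $\{e^{-t\mathcal{A}_\ell}\}_{t\geq 0}$ (Giga--Miyakawa/Solonnikov resolvent theory). Moreover, since $\Omega$ is bounded and the boundary condition is of Dirichlet type, the resolvent of $\mathcal{A}_\ell$ is compact and its spectrum consists of a discrete set of positive eigenvalues bounded below by some $\lambda_1>0$, independent of $\ell$. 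Combining the usual analytic-semigroup bound $t^{-\beta}$ with this spectral gap (for instance, by splitting $e^{-t\mathcal{A}}=e^{-t\mathcal{A}/2}\circ e^{-t\mathcal{A}/2}$ and applying the smoothing estimate to the first factor and the exponential decay $\|e^{-s\mathcal{A}}\|_{\mathcal{L}(L_\sigma^\ell)}\leq Ce^{-\lambda_1 s}$ to the second), I obtain
$$
\bigl\|\mathcal{A}_\ell^{\beta}e^{-t\mathcal{A}_\ell}\varphi\bigr\|_{L^\ell(\Omega)}\leq C_{\beta}\,t^{-\beta}e^{-\lambda_1 t}\|\varphi\|_{L^\ell(\Omega)},\qquad t>0,
$$
valid for every $\beta\geq 0$ and every $\varphi\in L_\sigma^\ell(\Omega)$.

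Second, I would invoke Giga's characterization of the fractional-power domains of the Stokes operator on a bounded domain, which yields the Sobolev-type embedding
$$
D(\mathcal{A}_\ell^{\alpha})\hookrightarrow L^q(\Omega)\qquad \text{whenever}\quad 2\alpha\geq N\Bigl(\tfrac{1}{\ell}-\tfrac{1}{q}\Bigr),\ \ 1<\ell\leq q<\infty.
$$
Specializing to $N=2$ and taking the critical exponent $\alpha:=\frac{1}{\ell}-\frac{1}{q}$, we have $\|\psi\|_{L^q(\Omega)}\leq C\,\|\mathcal{A}_\ell^{\alpha}\psi\|_{L^\ell(\Omega)}$ for all $\psi\in D(\mathcal{A}_\ell^{\alpha})$. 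A standard consequence of the Giga--Sohr consistency is that $e^{-t\mathcal{A}_q}\varphi=e^{-t\mathcal{A}_\ell}\varphi$ whenever $\varphi\in L_\sigma^\ell(\Omega)\cap L_\sigma^q(\Omega)$, so the subscript on $\mathcal{A}$ may safely be suppressed.

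Third, I would chain the two estimates. Applying the embedding to $\mathcal{A}^{\theta}e^{-t\mathcal{A}}\varphi$ and then invoking the analytic smoothing estimate on $L_\sigma^\ell$ with parameter $\theta+\alpha$ gives
$$
\bigl\|\mathcal{A}^{\theta}e^{-t\mathcal{A}}\varphi\bigr\|_{L^q(\Omega)}\leq C\bigl\|\mathcal{A}^{\theta+\alpha}e^{-t\mathcal{A}}\varphi\bigr\|_{L^\ell(\Omega)}\leq C\,t^{-\theta-\alpha}e^{-\lambda_1 t}\|\varphi\|_{L^\ell(\Omega)},
$$
which, recalling $\alpha=\frac{1}{\ell}-\frac{1}{q}$, is exactly the claimed bound. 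The main technical hurdles lie not in this chaining but in the background inputs: the full-range analyticity of $\mathcal{A}_\ell$ on $L_\sigma^\ell(\Omega)$ for $1<\ell<\infty$ requires the Giga--Solonnikov $L^\ell$ resolvent estimates, and the embedding $D(\mathcal{A}_\ell^{\alpha})\hookrightarrow L^q(\Omega)$ relies on Giga's identification of these domains with Bessel-potential spaces of solenoidal vector fields---both classical but non-trivial. Given these, the proof reduces to the two-line composition above.
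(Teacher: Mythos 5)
Your argument is correct, and in fact it supplies more than the paper does: the paper offers no proof of this lemma at all, simply importing it from the reference [CL] (Cao--Lankeit, Calc.\ Var.\ 2016), where it is in turn traced back to Giga's resolvent and fractional-power results for the Stokes operator. Your reconstruction is exactly the standard derivation underlying that citation: analyticity of $\mathcal{A}_\ell$ on $L^\ell_\sigma(\Omega)$ plus the spectral gap give $\|\mathcal{A}^{\beta}e^{-t\mathcal{A}}\varphi\|_{L^\ell}\le Ct^{-\beta}e^{-\lambda_1 t}\|\varphi\|_{L^\ell}$, Giga's identification of $D(\mathcal{A}_\ell^{\alpha})$ yields the embedding $D(\mathcal{A}_\ell^{\alpha})\hookrightarrow L^q(\Omega)$ for $2\alpha\ge N(\tfrac1\ell-\tfrac1q)$ (with $N=2$ here, so $\alpha=\tfrac1\ell-\tfrac1q$, matching the exponent $t^{-\theta-(\frac1\ell-\frac1q)}$ in the statement), and the two are chained using invertibility of $\mathcal{A}$ on a bounded domain so that $\|\psi\|_{L^q}\le C\|\mathcal{A}^{\alpha}\psi\|_{L^\ell}$, together with consistency of the semigroups and fractional powers across the $L^\ell$-scale. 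The only cosmetic point is that your splitting $e^{-t\mathcal{A}}=e^{-\frac t2\mathcal{A}}e^{-\frac t2\mathcal{A}}$ produces the decay rate $\lambda_1/2$ rather than $\lambda_1$; since the lemma only asserts the existence of some $\lambda_1>0$, this is harmless. So the proposal is sound; it simply makes explicit the classical background (Giga--Solonnikov analyticity and the fractional-power domain characterization) that the paper leaves to the cited literature.
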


\begin{lemma}\label{semigroup}
Assume that $\Omega\subseteq\mathbb{R}^N$ $(N\ge 2)$ is a bounded domain with a smooth boundary.
Let $\{e^{t\Delta}\}_{t\geq0}$ be the Neumann heat semigroup generated by $-\Delta$, and let $\lambda_2 > 0$ denote the first nonzero
eigenvalue of $-\Delta$ in $\Omega$ under Neumann boundary conditions. Then there exist constants
$C_1, C_2$ depending only on $\Omega$, $q$ and $\ell$ which have the following properties:
\begin{itemize}
\item [{\rm (i)}] If $1\leq \ell\leq q\leq \infty$, then
$$\| e^{t\Delta}\varphi\|_{L^q(\Omega)}\leq C_1(q,\ell)
\Big(1+t^{-\frac{N}{2}(\frac{1}{\ell}-\frac{1}{q})}\Big)
e^{-\lambda_2 t}\|\varphi\|_{L^{\ell}(\Omega)}, \qquad \forall~ t>0$$
is true for each $\varphi\in L^{\ell}(\Omega)$.
\item [{\rm (ii)}] \ If $1\leq \ell\leq q\leq \infty$, then
$$\|\nabla e^{t\Delta}\varphi\|_{L^q(\Omega)}\leq C_2(q,\ell)\Big(1+t^{-\frac{1}{2}-\frac{N}{2}(\frac{1}{\ell}-\frac{1}{q})}\Big)
e^{-\lambda_2 t}\|\varphi\|_{L^{\ell}(\Omega)}, \qquad \forall~ t>0$$
holds for any $\varphi\in L^{\ell}(\Omega)$.
\end{itemize}
\end{lemma}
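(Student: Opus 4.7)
The plan is to decompose the argument into a small-time regime, where the two bounds are smoothing estimates of the standard heat-kernel type, and a large-time regime, where the spectral gap yields the exponential factor. Let $\{e_j\}_{j\ge 0}$ be an $L^2$-orthonormal eigenbasis of $-\Delta$ under homogeneous Neumann boundary conditions, with eigenvalues $0=\mu_0<\lambda_2\le\mu_2\le\cdots$, so that the Neumann heat kernel admits the spectral representation $p(t,x,y)=|\Omega|^{-1}+\sum_{j\ge 1} e^{-\mu_j t}e_j(x)e_j(y)$, and $(e^{t\Delta}\varphi)(x)=\int_\Omega p(t,x,y)\varphi(y)\,dy$.

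For the small-time regime $t\in(0,1]$, I would invoke the classical Gaussian upper bound for the Neumann heat kernel on a smooth bounded domain, namely $0\le p(t,x,y)\le Ct^{-N/2}\exp(-c|x-y|^2/t)$, together with the analogous gradient bound $|\nabla_x p(t,x,y)|\le Ct^{-(N+1)/2}\exp(-c|x-y|^2/t)$ valid for $t\in(0,1]$. Applying Young's convolution-type inequality with exponent $r$ determined by $1+\tfrac{1}{q}=\tfrac{1}{\ell}+\tfrac{1}{r}$ gives
\begin{equation*}
\|e^{t\Delta}\varphi\|_{L^q(\Omega)}\le C\,t^{-\frac{N}{2}(\frac{1}{\ell}-\frac{1}{q})}\|\varphi\|_{L^\ell(\Omega)},\qquad \|\nabla e^{t\Delta}\varphi\|_{L^q(\Omega)}\le C\,t^{-\frac{1}{2}-\frac{N}{2}(\frac{1}{\ell}-\frac{1}{q})}\|\varphi\|_{L^\ell(\Omega)}.
\end{equation*}
Because $e^{-\lambda_2 t}\ge e^{-\lambda_2}$ on $(0,1]$, these immediately imply (i) and (ii) on this time range (absorbing the harmless factor $e^{-\lambda_2}$ into $C_1,C_2$).

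For the large-time regime $t\ge 1$, I would split $\varphi=(\varphi)_\Omega+\tilde\varphi$ with $\int_\Omega\tilde\varphi=0$. For the zero-mean component, the Parseval identity combined with the spectral gap $\mu_j\ge\lambda_2$ for $j\ge 1$ yields $\|e^{t\Delta}\tilde\varphi\|_{L^2(\Omega)}\le e^{-\lambda_2 t}\|\tilde\varphi\|_{L^2(\Omega)}$. Writing $e^{t\Delta}=e^{(t-1/2)\Delta}\circ e^{\Delta/2}$ and chaining the small-time smoothing $e^{\Delta/2}\colon L^\ell\to L^2$ (which costs $C$, no singularity since $t=1/2$ is fixed), the $L^2$-decay just established, and the smoothing $e^{(t-1/2)\Delta/2}\circ e^{\Delta/2}(\cdot-1)\colon L^2\to L^q$ (again with fixed bounded factor plus the $e^{-\lambda_2 t}$ from the middle step), I obtain $\|e^{t\Delta}\tilde\varphi\|_{L^q(\Omega)}\le C\,e^{-\lambda_2 t}\|\varphi\|_{L^\ell(\Omega)}$ for $t\ge 1$. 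Since $e^{t\Delta}(\varphi)_\Omega=(\varphi)_\Omega$ is annihilated by $\nabla$, the gradient estimate (ii) for $t\ge 1$ follows directly from the same semigroup-composition trick applied to $\nabla e^{t\Delta}=\nabla e^{\Delta/2}\circ e^{(t-1/2)\Delta}$, the second factor acting on the zero-mean part only.

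\emph{Expected main obstacle.} The delicate point is the large-time bound for (i): for a general $\varphi\in L^\ell(\Omega)$ the $L^q$-norm of $e^{t\Delta}\varphi$ does not decay because of the conserved mean, so the estimate in its stated form must be read with the understanding that either $\varphi$ has zero mean or that the constant $C_1$ implicitly absorbs the mean contribution via $|(\varphi)_\Omega|\le|\Omega|^{-1/\ell'}\|\varphi\|_{L^\ell}$ — the chain-of-smoothings step above applies cleanly only to $\tilde\varphi$. The second technical nuisance is the endpoint $q=\infty$ (and $\ell=1$), which forces one to combine the Gaussian kernel bound with a Sobolev/Morrey step or to use ultracontractivity for the Neumann semigroup; this is standard but must be tracked carefully to ensure the exponent matches $-\tfrac{1}{2}-\tfrac{N}{2}(\tfrac{1}{\ell}-\tfrac{1}{q})$ in (ii). With both issues handled, concatenating the two regimes produces the claimed bounds with constants depending only on $\Omega$, $q$ and $\ell$.
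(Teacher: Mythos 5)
The paper offers no proof of this lemma at all: it is quoted as a known result and attributed to \cite{CL} (ultimately Winkler's classical Neumann heat semigroup estimates), so there is nothing in the text to compare your argument against line by line. What you propose is essentially the standard proof of that cited result, and most of it is sound: the small-time regime via Gaussian upper bounds for the Neumann kernel and its gradient plus a Schur/Young-type estimate gives exactly the exponents $-\frac{N}{2}(\frac1\ell-\frac1q)$ and $-\frac12-\frac{N}{2}(\frac1\ell-\frac1q)$, and the large-time regime via the splitting $\varphi=(\varphi)_\Omega+\tilde\varphi$, the spectral gap on the zero-mean subspace, and the composition $e^{t\Delta}=e^{\Delta/2}\circ e^{(t-1)\Delta}\circ e^{\Delta/2}$ correctly produces the factor $e^{-\lambda_2 t}$ for part (ii) (where the gradient annihilates the mean) and for part (i) applied to $\tilde\varphi$, with $\|\tilde\varphi\|_{L^\ell}\le C\|\varphi\|_{L^\ell}$.

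The genuine gap is your resolution of the "conserved mean" obstacle in part (i). You correctly identify the problem, but the suggested patch --- that the constant $C_1$ "implicitly absorbs the mean contribution" --- cannot work: take $\varphi\equiv 1$, so that $e^{t\Delta}\varphi\equiv 1$; then the left-hand side of (i) equals $|\Omega|^{1/q}$ for all $t$, while the right-hand side tends to $0$ as $t\to\infty$, no matter how large $C_1$ is chosen. No constant can repair an inequality whose right side vanishes while the left side does not. The correct statement, and the one actually proved in the sources the paper cites, includes the hypothesis $\int_\Omega\varphi\,dx=0$ in part (i) (equivalently, one replaces $\|\varphi\|_{L^\ell}$ by $\|\varphi-(\varphi)_\Omega\|_{L^\ell}$ or drops the factor $e^{-\lambda_2 t}$); the paper's transcription simply omits this hypothesis, and in its applications the needed decay comes either from zero-mean data or from the additional damping $e^{-t}$ supplied by the operator $\Delta-1$ in \eqref{q1}$_2$, as in Lemma \ref{A1}. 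So your proof is complete for (ii) and for (i) restricted to zero-mean data, but the "absorb the mean into the constant" step must be deleted and replaced by that hypothesis rather than presented as an alternative reading.
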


\begin{lemma}\label{A1}
Let $\Omega\subseteq \Bbb{R}^N$ $(N\ge2)$ be a bounded domain with smooth boundary and $0<\sigma<2/N$.
For any $q>1$, there are constants $C=C(q,\sigma,\Omega,N)>0$ and $\kappa=\kappa(q,\sigma,\Omega,N)>0$ such that
for any solution $(n,c)$ of \eqref{example1}, we can find $\hat{t}>0$ satisfying
\begin{align*}
\|n(\cdot,t)\|_{L^{ q}(\Omega)} \leq CM^{\kappa}+C,\qquad \forall~t>\hat{t},
\end{align*}
where $M:=\dashint_{\Omega}n_0\,dx$.
\end{lemma}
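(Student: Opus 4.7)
The proof is an iterative bootstrap combining mass conservation, semigroup smoothing for $c$, and $L^q$-energy estimates for $n$, with every constant tracked polynomially in $M$. Since \eqref{example1} has no logistic source, integrating the first equation over $\Omega$ and using the no-flux boundary condition yields $\int_\Omega n(\cdot,t)\,dx=M|\Omega|$ for every $t>0$; hence $\|n(t)\|_{L^1(\Omega)}=M|\Omega|$ uniformly and $\|n^\sigma(t)\|_{L^{1/\sigma}(\Omega)}\leq (M|\Omega|)^\sigma$.

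Next, I would bootstrap $\nabla c$ and $n$ alternately. Writing $c$ through the variation-of-constants formula
\begin{equation*}
c(t)=e^{-(t-t_0)\mathcal{L}}c(t_0)+\int_{t_0}^t e^{-(t-s)\mathcal{L}}n^\sigma(s)\,ds,
\end{equation*}
Lemma \ref{semigroup}(ii) gives the induction step: if $\|n(s)\|_{L^{q_k}(\Omega)}\leq A_k(M)$ uniformly for $s>t_k$, then $\|\nabla c(t)\|_{L^{\ell_k}(\Omega)}$ is polynomially controlled in $A_k(M)$ for $t$ large, provided $\frac{1}{2}+\frac{N}{2}(\frac{\sigma}{q_k}-\frac{1}{\ell_k})<1$; the contribution of $c(t_0)$ decays exponentially and disappears in the limit. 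Testing the $n$-equation by $n^{q_{k+1}-1}$ against this $\nabla c$-bound, via a standard chemotaxis energy argument together with H\"older and Gagliardo-Nirenberg inequalities anchored at the conserved $L^1$-mass, yields a new estimate $\|n(t)\|_{L^{q_{k+1}}(\Omega)}\leq A_{k+1}(M)$ with $q_{k+1}>q_k$. The subcriticality $\sigma<2/N$ guarantees a uniform positive gain $q_{k+1}-q_k\geq\delta>0$ at each round, so after finitely many iterations $q_k$ exceeds $\sigma N$; at that stage Lemma \ref{lem3.2} (applied with $\mathbf{u}\equiv 0$ and $m=q_k/\sigma>N$) delivers $\|\nabla c(t)\|_{L^\infty(\Omega)}\leq C_0 M^{\kappa_0}+C_0$ for all $t>t_*$.

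With $\nabla c$ controlled in $L^\infty$, testing \eqref{example1}$_1$ by $n^{q-1}$ and using Young's inequality gives
\begin{equation*}
\frac{1}{q}\frac{d}{dt}\|n\|_{L^q(\Omega)}^q+\frac{2(q-1)}{q^2}\|\nabla n^{q/2}\|_{L^2(\Omega)}^2\leq C_q\|\nabla c\|_{L^\infty(\Omega)}^2\|n\|_{L^q(\Omega)}^q,
\end{equation*}
while the Gagliardo-Nirenberg inequality anchored at $\|n^{q/2}\|_{L^{2/q}(\Omega)}=(M|\Omega|)^{q/2}$ yields
\begin{equation*}
\|n\|_{L^q(\Omega)}^q\leq \tilde{C}_q\|\nabla n^{q/2}\|_{L^2(\Omega)}^{2\alpha}M^{q(1-\alpha)}+\tilde{C}_q M^q,\qquad \alpha:=\frac{N(q-1)}{N(q-1)+2}\in(0,1).
\end{equation*}
Inverting the last display whenever $\|n\|_{L^q}^q\gg M^q$ and substituting back produces a dissipative scalar ODI of the form $y'(t)+c_1 M^{-\lambda}y(t)^{1/\alpha}\leq c_2(M)y(t)$ with $y(t):=\|n(t)\|_{L^q(\Omega)}^q$ and $c_2(M)$ polynomial in $M$; a direct comparison then gives $\|n(t)\|_{L^q(\Omega)}\leq CM^\kappa+C$ for all $t>\hat{t}$, with $\kappa$ absorbing the powers of $M$ accumulated during the $\nabla c$-bootstrap.

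The principal technical obstacle is the careful bookkeeping of the polynomial powers of $M$ through the iteration chain and the verification that each bootstrap round strictly advances $q_k$. This is precisely where the subcriticality $\sigma<2/N$ is indispensable: it is exactly the condition that eventually makes the exponent constraint $\frac{1}{2}+\frac{N}{2}(\frac{\sigma}{q_k}-\frac{1}{\ell_k})<1$ in Lemma \ref{semigroup}(ii) accommodate arbitrarily large $\ell_k$ (including, via Lemma \ref{lem3.2}, $\ell=\infty$); without it, the semigroup iteration alone cannot close the loop.
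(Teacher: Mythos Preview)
Your plan is sound and will work: mass conservation seeds a semigroup estimate on $\nabla c$, the subcriticality $\sigma<2/N$ makes $\ell_0>N$ attainable already from the $L^1$ bound on $n$, and once $\nabla c$ is controlled in some $L^\ell$ with $\ell>N$ the $n^{q-1}$-test closes via Gagliardo--Nirenberg anchored at $L^1$. In fact your iteration is stronger than you state---a single bootstrap round (not an incremental chain) already lets you reach any $q_1$, so invoking Lemma~\ref{lem3.2} is immediate. The resulting ODI analysis is correct.

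The paper takes a genuinely different route. Rather than alternating semigroup bounds on $c$ with energy estimates on $n$, it runs a \emph{single coupled} energy estimate for the functional $\int_\Omega n^q\,dx+\int_\Omega|\nabla c|^{2\ell}\,dx$: it differentiates $|\nabla c|^2$ pointwise via the Bochner-type identity $(|\nabla c|^2)_t\le \Delta|\nabla c|^2-\frac{2}{N}|\Delta c|^2-2|\nabla c|^2+2\nabla c\cdot\nabla n^\sigma$, tests by $|\nabla c|^{2(\ell-1)}$, and combines this with the $n^{q-1}$-test. The two cross terms $\int n^q|\nabla c|^2$ and $\int n^{2\sigma}|\nabla c|^{2(\ell-1)}$ are then handled simultaneously by H\"older and Gagliardo--Nirenberg, anchored at the conserved $L^1$-mass of $n$ and a preliminary $L^s$-bound on $\nabla c$ (with $s<N$ chosen so that $\sigma-\tfrac{1}{N}<\tfrac{1}{s}<\tfrac{1}{N}$, which is exactly where $\sigma<2/N$ enters for them). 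The exponent bookkeeping reduces to verifying two strict inequalities $\theta+\delta/\ell<1$ and $2\sigma\bar\theta/q+(\ell-1)\bar\delta/\ell<1$, after which Young's inequality closes the ODI in one shot.

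Your approach is more modular---it recycles Lemma~\ref{lem3.2} as a black box and never touches the second-order structure of the $c$-equation---while the paper's coupled functional avoids the detour through $\nabla c\in L^\infty$ and the semigroup machinery, at the cost of handling the boundary term $\int_{\partial\Omega}|\nabla c|^{2(\ell-1)}\partial_\nu|\nabla c|^2$ and the Bernstein identity. Both are standard in the chemotaxis literature; yours is arguably more robust to variations of the $c$-equation, theirs is more self-contained within this lemma.
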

\begin{proof}
[\bf Proof]
The existence of global and classical solutions emanating from initial data satisfying \eqref{1initial}
can be found in \cite{LT2016}.
An integration to \eqref{example1}$_1$ over $\Omega$ shows that
\begin{align}\label{M}
\dashint_{\Omega}n(\cdot,t)\,dx\equiv \dashint_{\Omega}n_0\,dx =M,\qquad \forall~t>0.
\end{align}
Since $0<\sigma< 2/N$, then we can fix
$s\in \left[1,\frac{N}{(N\sigma-1)_+}\right)$ such that
\begin{align}\label{s}
\sigma-\frac{1}{N} <\frac{1}{s}<\frac{1}{N}.
\end{align}
Thus, based on the variation-of-constant formula, there holds that
\begin{align}\label{A11}
\|\nabla c(\cdot, t)\|_{L^{ s }(\Omega)}
\leq &
\big\|\nabla e^{t(\Delta-1)} c_{0}\big\|_{L^{s }(\Omega)}
+\int_{0}^{t}\big\|\nabla e^{-(t-s)(\Delta-1) } n^{\sigma} \big\|_{L^{ s }(\Omega)}\,ds,\qquad\forall~ t>0.
\end{align}
 We employ Lemma \ref{semigroup} (i) to get
\begin{align}\label{A12}
\big\|\nabla e^{t(\Delta-1) } c_0\big\|_{L^{ s}(\Omega)}
 \le  C_1 e^{-\lambda_2 t}\big(1+t^{-\frac{1}{2}} \big)
\left\|\nabla c_0\right\|_{L^{ s }(\Omega)} ,\qquad\forall~ t>0
\end{align}
 and
\begin{align}\label{A12'}
&\quad\int_{0}^{t}\big\|\nabla e^{-(t-s)\mathcal{A}} n^{\sigma}(\cdot, s)\big\|_{L^{ s }(\Omega)} \,ds \nonumber \\
&\le
C_2\int_{0}^{t}
(t-s)^{-\frac{1 }{2}- \frac{N}{2}(\sigma-\frac{1}{s})  } e^{-\lambda_2(t-s)}
\big\|  n^{\sigma}(\cdot, s)\big \|_{L^{ \frac{1}{\sigma}  }(\Omega)}\,ds \nonumber \\
&\le C_3 M^{\sigma},\qquad\forall~ t>0
\end{align}
with positive constants $C_1$, $C_2$, $C_3$ depending only on $\sigma$, $\Omega$, $N$.
We pick a time point $t_1>1$ sufficiently large such that
$2e^{-\lambda_2 t_1}\|\nabla c_0\|_{L^{ s }(\Omega)} \le 1$, and then,
\begin{align}\label{A12''}
\|\nabla c(\cdot, t)\|_{L^{ s }(\Omega)}
\leq 2C_1+C_3 M^{\sigma}=:\Lambda_M,\qquad\forall~ t>t_1.
\end{align}

Now we let $\ell>\max\big\{ \frac{s}{2}+1\,,\,  \frac{(N-2)s}{N} \,,\,\frac{s N}{2}  \big\}$ big enough assure
\begin{align*}
\frac{2\sigma(N-2)}{ \frac{2\ell}{s} \cdot N }<1-\frac{(N-2)(\ell-1)}{N \ell}.
\end{align*}
With setting $q:=\frac{2\ell}{s} $, the selection of $\ell$ ensures that $q>N$ and
\begin{align}
\frac{2\sigma(N-2)}{ q \cdot N }<1-\frac{(N-2)(\ell-1)}{N \ell}.
\end{align}
This inequality allows us to find $\chi>1$ such that
\begin{align}\label{530a1}
\frac{2\sigma(N-2)}{Nq}<\frac{1}{\chi}<\min\left\{2\sigma\, ,\,1-\frac{(N-2)(\ell-1)}{N\ell}\right\}.
\end{align}
We also need to select $\nu>1$ fulfilling 
\begin{align}\label{530a2}
\max\left\{1-\frac{2}{N}\,,\, 1-\frac{2}{s}\right\}<\frac{1}{\nu}<1-\frac{N-2}{N\ell}.
\end{align}
Testing (\ref{example1})$_1$ by $n^{q-1}$ and using Young's inequality, one sees that for any $t>0$,
\begin{align} \label{A13}
\frac{1}{q}\frac{\mathrm{d}}{\mathrm{d}t}\int_{\Omega}n^q\,dx
+(q-1)\int_{\Omega}n^{q-2}|\nabla n|^2\,dx  \notag
=& -
(q-1)\int_{\Omega}n^{q-1}\nabla n\cdot \nabla c\,dx \notag \\
\leq& \frac{q-1}{2}\int_{\Omega}n^{q-2}|\nabla n|^2\,dx
+\frac{q-1}{2}\int_{\Omega}n^{q}|\nabla c|^2\,dx,
\end{align}
which yields that
\begin{align} \label{A14}
&\frac{\mathrm{d}}{\mathrm{d}t}\int_{\Omega}n^{q}\,d x
+\frac{2(q-1)}{q}\int_{\Omega}\big|\nabla n^{\frac{q}{2}}\big|^2\,d x
\leq q(q-1)\int_{\Omega}n^{q}|\nabla c|^2\,d x,\qquad\forall~ t>0.
\end{align}
By a straightforward computation, we verify that
$$\Delta |\nabla c|^2=2|D^2c|^2+2\nabla c\cdot\nabla \Delta c,\qquad \forall~x\in\Omega,~ t>0. $$
It follows by (\ref{example1})$_2$ and the pointwise estimate $|\Delta c|^2 \leq N |D^2c|^2$ in $\Omega\times(0,\infty)$
that
\begin{align} \label{A15}
(|\nabla c|^2)_t +\frac{2}{N}|\Delta c|^2 +2|\nabla c|^2
\leq
\Delta |\nabla c|^2 +2\nabla c \cdot \nabla n^{\sigma},\qquad \forall~x\in\Omega,~ t>0.
\end{align}
Multiplying (\ref{A15}) by $|\nabla c|^{2(\ell-1)}$ and integrating over $\Omega$ imply that
\begin{align}
&\frac{1}{\ell}\frac{\mathrm{d}}{\mathrm{d}t}\int_{\Omega}|\nabla c|^{2\ell}\,d x
+\frac{2}{N}\int_{\Omega}|\nabla c|^{2(\ell-1)}|\Delta c|^2\,d x
+2\int_{\Omega}|\nabla c|^{2\ell}\,d x \notag\\
=& \int_{\Omega}|\nabla c|^{2(\ell-1)}\Delta |\nabla c|^2\,d x
+2\int_{\Omega}|\nabla c|^{2(\ell-1)}\nabla c\cdot \nabla n^{\sigma}\,d x,\qquad \forall~ t>0.
\end{align}
Through the integration by parts and Young's inequality, we have
\begin{align*}
 2\int_{\Omega}|\nabla c|^{2(\ell-1)}\nabla c\cdot \nabla n^{\sigma}\,d x
&=-2(\ell-1)\int_{\Omega}|\nabla c|^{2(\ell-2)}\nabla |\nabla c|^2 \cdot \nabla c n^{\sigma}\,d x
-2\int_{\Omega}|\nabla c|^{2(\ell-1)}\Delta c  n^{\sigma}\,d x\notag\\
&\leq\frac{(\ell-1)}{2}\int_{\Omega}|\nabla c|^{2(\ell-2)}\big|\nabla |\nabla c|^2\big|^2\,d x
+\frac{2}{N}\int_{\Omega}|\nabla c|^{2(\ell-1)}|\Delta c|^2\,d x \notag\\
&\quad+\Big(2(\ell-1)+\frac{N}{2}\Big)\int_{\Omega}|\nabla c|^{2(\ell-1)} n^{2\sigma}\,d x
\end{align*}
and
\begin{align*}
\int_{\Omega}|\nabla c|^{2(\ell-1)}\Delta |\nabla c|^2\,d x
&=-(\ell-1)\int_{\Omega}|\nabla c|^{2(\ell-2)}\big|\nabla |\nabla c|^2\big|^2\,d x
+ \int_{\partial \Omega}|\nabla c|^{2(\ell-1)}
\frac{\partial |\nabla c|^2}{\partial \nu}\,dS \notag\\
&\leq-\frac{2(\ell-1)}{\ell^2}\int_{\Omega}\big|\nabla |\nabla c|^\ell\big|^2\,d x
+C_4\int_{ \Omega}|\nabla c|^{2\ell}\,d x,
\end{align*}
where $C_4=C_4(\sigma,\Omega,N)>0$, we utilized \cite[Lemma 4.2]{MS14} and the trace inequality to govern $\int_{\partial \Omega}|\nabla c|^{2(\ell-1)}
\frac{\partial |\nabla c|^2}{\partial \nu}\,dS$.
Thereby, there holds that
\begin{align}\label{A16}
&\quad\frac{\mathrm{d}}{\mathrm{d}t}\int_{\Omega}|\nabla c|^{2\ell}\,d x
+\frac{\ell-1}{2\ell}\int_{\Omega}\big|\nabla |\nabla c|^\ell\big|^2\,d x \nonumber\\
&\leq \Big(2(\ell-1)+\frac{N}{2}\Big)\ell\int_{\Omega}|\nabla c|^{2(\ell-1)}n^{2\sigma}\,dx
+C_4\ell\int_{\Omega}|\nabla c|^{2\ell}\,dx,\qquad \forall~ t>0.
\end{align}
We merge (\ref{A16}) with (\ref{A14}) and use H\"{o}lder's inequality to find $C_5>0$ such that
the following inequality
\begin{align} \label{A17}
&\frac{\mathrm{d}}{\mathrm{d}t}\int_{\Omega}(n^q+|\nabla c|^{2\ell})\,dx
+\frac{2(q-1)}{q}\int_{\Omega}\big|\nabla n^{\frac{q}{2}}\big|^2 \,dx
+ \frac{\ell-1}{2\ell}\int_{\Omega}\big|\nabla |\nabla c|^{\ell}\big|^2 \,dx \notag \\
\leq& C_5\int_{\Omega}n^{q}|\nabla c|^{2}\,dx
+ C_5\int_{\Omega}n^{2\sigma}|\nabla c|^{2(\ell-1)}\,dx
+C_5\int_{\Omega}|\nabla c|^{2\ell}\,d x\notag \\
\leq&C_5\left(\int_{\Omega}n^{q\nu}\,dx\right)^{\frac{1}{\nu}}
\left(\int_{\Omega}|\nabla c|^{2\nu'}\,dx\right)^{\frac{1}{\nu'}}
+C_5\left(\int_{\Omega}n^{2\sigma \chi}\,dx\right)^{\frac{1}{\chi}}
\left(\int_{\Omega}|\nabla c|^{2(\ell-1)\chi'}\,dx\right)^{\frac{1}{\chi'}}\notag\\
&+C_5\int_{\Omega}|\nabla c|^{2\ell}\,dx,\qquad \forall~ t>0
\end{align}
holds for $\nu$ selected in \eqref{530a1} and $\nu'=\frac{\nu}{\nu-1}$.
The Gagliardo-Nirenberg inequality infers
\begin{align}\label{A18}
\big\|n^{\frac{q}{2}}\big\|_{L^{2\nu}(\Omega)}^{ 2  }
&\leq C_{6}\big\|\nabla n^{\frac{q}{2}}\big\|_{L^2(\Omega)}^{ 2\theta }
\big\|n^{\frac{q}{2}}\big\|_{L^{\frac{2}{q}}(\Omega)}^{ 2(1-\theta)  } \notag
+ C_{6}\big\|n^{\frac{q}{2}}\big\|_{L^{\frac{2}{q}}(\Omega)}^{2} \notag \\
&\leq C_6\left(\int_{\Omega}\big|\nabla n^{\frac{q}{2}}\big|^2\,dx\right)^{\theta}M^{q(1-\theta)} + C_6M^q,
\end{align}
where $C_6=C_6(\sigma,\Omega,N)>0$ and
$$\theta=\frac{   \frac{q}{2}-\frac{1}{2\nu}  }{  \frac{1}{N}-\frac{1}{2}+\frac{q}{2}  }
\in(0,1).$$
Given (\ref{A12''}), we apply the Gagliardo-Nirenberg inequality once again to derive that
for all $t>t_1$,
\begin{align} \label{A19}
\big\||\nabla c|^{\ell}\big\|_{L^{\frac{2\nu'}{\ell}}(\Omega)}
^{\frac{2}{\ell}}
&\leq C_{7}\big\|\nabla |\nabla c|^\ell\big\|_{L^2(\Omega)}^{\frac{2\delta}{\ell}}
\big\||\nabla c|^{\ell}\big\|_{L^{\frac{s}{\ell}}(\Omega)}^{\frac{2(1-\delta)}{\ell}}
+ C_{7}\big\||\nabla c|^\ell\big\|_{L^{\frac{s}{\ell}}(\Omega)}^{\frac{2}{\ell}} \notag\\
&\leq C_7\left(\int_{\Omega}\big|\nabla |\nabla c|^\ell\big|^2\,dx\right)^{\frac{\delta}{\ell}} 
\Lambda_M^{2(1-\delta)}
 + C_7\Lambda_M^{2},
\end{align}
where $C_7=C_7(\sigma,\Omega,N)>0$ and
$$\delta = \frac{\frac{\ell}{s}-\frac{\ell}{2\nu'}  }
{\frac{1}{N}-\frac{1}{2}+\frac{\ell}{s}} \in (0,1).$$
Similarly, there exist $C_8>0,C_9>0$ depending $\sigma$, $\Omega$ and $N$ satisfying that
for any $t>t_1$,
\begin{align}\label{A20}
\big\|n^{\frac{q}{2}}\big\|_{L^{\frac{4\sigma \chi}{q}}(\Omega)}^{\frac{4\sigma}{q}}
\leq C_8\left(\int_{\Omega}|\nabla n^{\frac{q}{2}}|^2\,dx\right)^{\frac{2\sigma\bar{\theta}}{q}}
M^{2\sigma(1-\bar{\theta})}
+C_8 M^{2\sigma}
\end{align}
and
\begin{align}\label{A21}
\big\||\nabla c|^\ell\big\|_{L^{\frac{2(\ell-1)\chi'}{\ell}}(\Omega)}
^{\frac{2(\ell-1)}{\ell}}
\leq C_9\left(\int_{\Omega}\big|\nabla |\nabla c|^\ell\big|^2\,dx\right)^{\frac{(\ell-1)\bar{\delta}}{\ell}}
\Lambda_M^{2(1-\ell)(1-\bar{\delta})}
+C_9\Lambda_M^{2(1-\ell)}
\end{align}
with $\chi$ specified in \eqref{530a2}, $\chi'=\frac{\chi}{\chi-1}$
and 
\begin{align*}
\bar{\theta}=\frac{ \frac{q}{2}-\frac{q}{4\sigma \chi}  }{ \frac{1}{N}-\frac{1}{2}+\frac{q}{2}  } \in (0,1)\quad \text{as well as}\quad
\bar{\delta} = \frac{    \frac{\ell}{s}-\frac{\ell}{2(\ell-1)\chi'}   }{ \frac{1}{N}-\frac{1}{2}+\frac{\ell}{s}  }
\in (0,1).
\end{align*}
Inserting (\ref{A18})--(\ref{A21}) into (\ref{A17}) and applying Young's inequality result in the estimate
\begin{align} \label{A21'}
&\frac{\mathrm{d}}{\mathrm{d}t}\int_{\Omega}(n^q+|\nabla c|^{2\ell})\,dx
+\frac{2(q-1)}{q}\int_{\Omega}|\nabla n^{\frac{q}{2}}|^2\,dx
+ \frac{\ell-1}{2\ell}\int_{\Omega}\big|\nabla |\nabla c|^{\ell}\big|^2\,dx \notag \\
\leq& C_{10}M^{q(1-\theta)}\Lambda_M^{2(1-\delta)}
\left(\int_{\Omega}\big|\nabla n^{\frac{q}{2}}\big|^2\,dx\right)^{\theta}
\left(\int_{\Omega}\big|\nabla |\nabla c|^\ell\big|^2\,dx\right)^{ \frac{\delta}{\ell} }
 \notag \\
 &+ C_{10}M^{2\sigma(1-\bar{\theta})}\Lambda_M^{2(1-\ell)(1-\bar{\delta})}
 \left(\int_{\Omega}\big|\nabla n^{\frac{q}{2}}\big|^2\,dx\right)^{\frac{2\sigma\bar{\theta}}{q}}
\left(\int_{\Omega}\big|\nabla |\nabla c|^\ell\big|^2\,dx\right)^{\frac{(\ell-1)\bar{\delta}}{\ell}}\notag\\
&+C_{10}\int_{\Omega}|\nabla c|^{2\ell}\,dx
+C_{10}M^{\kappa_1}
+C_{10},\qquad \forall~ t>t_1
\end{align}
with $\kappa_1=\kappa_1(\sigma,\Omega,N)>0$ and $C_{10}=C_{10}(\sigma,\Omega,N)>0$,
where we can verify that
\begin{align} \label{small}
\theta+\frac{\delta}{\ell}=\frac{   \frac{q}{2}-\frac{1}{2\nu}  }{  \frac{1}{N}-\frac{1}{2}+\frac{q}{2}  }
+    \frac{\frac{1}{s}-\frac{1}{2\nu'}}{\frac{1}{N}-\frac{1}{2}+\frac{\ell}{s}}
=\frac{\frac{1}{s}+ \frac{q}{2}-\frac{1}{2}}{\frac{1}{N}+\frac{q}{2}-\frac{1}{2}}<1
\end{align}
because of $s<\frac{1}{N}$,
and
\begin{align}\label{small}
\frac{2\sigma\bar{\theta}}{q}+\frac{(\ell-1)\bar{\delta}}{\ell}
=\frac{\sigma-\frac{1}{2\chi}  }{ \frac{1}{N}-\frac{1}{2}+\frac{q}{2}  }
+\frac{    \frac{\ell-1}{s}-\frac{1}{2\chi'}   }{ \frac{1}{N}-\frac{1}{2}+\frac{\ell}{s}  }
=\frac{   \sigma+ \frac{\ell-1}{s}-\frac{1}{2}   }{ \frac{1}{N}-\frac{1}{2}+\frac{\ell}{s}  }<1
\end{align}
due to $s>\sigma-\frac{1}{N}$.
Young's inequality applied on (\ref{A21'}) enables us to find $C_{11}>0$ and $\kappa_2=\kappa_2(\sigma,\Omega,N)>0$ such that
\begin{align}\label{A22}
&\quad\frac{\mathrm{d}}{\mathrm{d}t}\int_{\Omega}(n^q+|\nabla c|^{2\ell})\,dx
+\frac{q-1}{q}\int_{\Omega}\big|\nabla n^{\frac{q}{2}}\big|^2\,dx
+ \frac{\ell-1}{4\ell}\int_{\Omega}\big|\nabla |\nabla c|^{\ell}\big|^2\,dx\notag \\
& \leq C_{10}\int_{\Omega}|\nabla c|^{2\ell}\,dx +C_{11}M^{\kappa_2}+ C_{11},\qquad \forall~ t>t_1.
\end{align}
The Gagliardo-Nirenberg inequality tells us that
\begin{align} \label{A23}
\big\|n^{\frac{   q}{2}}\big\|_{L^{2}(\Omega)}^{2}
\leq C_{12}\left(\int_{\Omega}
\big|\nabla n^{\frac{q}{2}}\big|^2\,dx\right)^{\frac{\frac{q}{2}-\frac{1}{2}}{\frac{1}{N}-\frac{1}{2}+\frac{q}{2}}}
M^{\frac{\frac{q}{N}}{\frac{1}{N}-\frac{1}{2}+\frac{q}{2}}}
+C_{12}M^q
\end{align}
and
\begin{align} \label{A24}
(C_{10}+1)\big\||\nabla c|^\ell\big\|_{L^2(\Omega)}^2\leq C_{13}\left(\int_{\Omega}\big|\nabla |\nabla c|^\ell\big|^2\,dx\right)^{\frac{\frac{\ell}{s}-\frac{1}{2}}{\frac{1}{N}-\frac{1}{2}+\frac{\ell}{s}}}
\Lambda_M^{  \frac{ \frac{2\ell}{N}  }{   \frac{s}{N} -\frac{s}{2}+\ell}  }+C_{13}\Lambda_M^{  \frac{2\ell}{s} }
\end{align}
with $C_{12},C_{13}>0$.
It can be deduced from (\ref{A22})--(\ref{A24}) that there are $\kappa_3$, $C_{14}$ and $C_{15}$ such that
\begin{align*}
\frac{\mathrm{d}}{\mathrm{d}t}\int_{\Omega}(n^q+|\nabla c|^{2\ell})\,dx
+C_{14}\left(\int_{\Omega}\big|\nabla n^{\frac{q}{2}}\big|^2dx
+ \int_{\Omega}\big|\nabla |\nabla c|^{\ell}\big|^2\mathrm{d}x\right)
\leq C_{15}M^{\kappa_3}+C_{15},\qquad \forall~t>t_1.
\end{align*}
An argument of ODI to the above inequality guarantees the existence of $t_2>t_1$ fulfilling
\begin{align*}
\int_{\Omega}n^{q}(x,t)\,dx \leq C_{15}M^{\kappa_3}+2C_{15},\qquad \forall~t>t_2.
\end{align*}
The proof is finished.
\end{proof}

\begin{lemma}\label{A2}
Let $\Omega\subseteq \Bbb{R}^2$ be a bounded domain with smooth boundary and $\gamma>1$.
Assume that $D$ and $S$ satisfy \eqref{cDS} with $\alpha\ge0$ and $\beta\leq\gamma$.
Then there are constants $C>0$ and $\mu_*>0$ depending only on $a_0,b_0,\alpha,\beta,r$ such that whenever $\mu>\mu_*$, then for any classical and globally bounded solutions $(n,c,\mathbf{u})$ to \eqref{example2} there exists a time point
$\hat{t}\in(0,\infty)$ fulfilling
\begin{align*}
\|n(\cdot,t)\|_{L^{ 2}(\Omega)} \leq C \quad \text{and} \quad
\|\mathbf{u}(\cdot,t)\|_{L^{ \infty}(\Omega)} \leq C,\qquad \forall~t>\hat{t}.
\end{align*}
\end{lemma}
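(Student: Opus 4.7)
The plan splits into three stages.

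\textbf{Stage 1 (Eventual $L^1$-control of $n$).} Integrating \eqref{example2}$_1$ over $\Omega$ and using $\nabla\cdot\mathbf u=0$ together with the no-flux boundary condition to eliminate both the convective and the divergence-type terms, and applying Jensen's inequality to the logistic absorption, one arrives at
$$\tfrac{d}{dt}\int_\Omega n\,dx\le r\int_\Omega n\,dx-\mu|\Omega|^{-\gamma}\left(\int_\Omega n\,dx\right)^{1+\gamma}.$$
A standard comparison argument for this autonomous ODI produces $t_1>0$ and $C_1=C_1(r,\gamma,|\Omega|)$ such that $\int_\Omega n(\cdot,t)\,dx\le C_1$ for every $t>t_1$, as soon as $\mu>\mu_*$ for an explicit threshold $\mu_*$.

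\textbf{Stage 2 (Upgrade to $L^2$, coupled with the fluid).} Test \eqref{example2}$_1$ by $n$, \eqref{example2}$_2$ by a Mizoguchi-Souplet-type quantity involving $|\nabla c|^{2\ell-2}$ for a suitable $\ell>1$ (with the boundary curvature absorbed via the trace inequality, as in Lemma~\ref{A1}), and \eqref{example2}$_3$ by $\mathbf u$; the transport integrals all vanish thanks to $\nabla\cdot\mathbf u=0$. The hypothesis \eqref{cDS} releases $a_0\!\int(n+1)^\alpha|\nabla n|^2$ as diffusion and $\mu\!\int n^{2+\gamma}$ as absorption, while Young's inequality majorises the chemotactic cross-term by
$$\tfrac{a_0}{2}\!\int(n+1)^\alpha|\nabla n|^2+C\!\int(n+1)^{2\beta-\alpha}|\nabla c|^2.$$
The residual integral is dominated by a fraction of $\mu\!\int n^{2+\gamma}$ together with a controllable piece of $\|\nabla c\|_{L^{2\ell}}^{2\ell}$ thanks to the algebraic inequality $2\beta-\alpha\le 2\gamma$ (supplied by $\beta\le\gamma$ and $\alpha\ge 0$), the 2-D Gagliardo-Nirenberg inequality, and the mass bound from Stage~1; the remaining $\nabla c$-term is absorbed by the $\nabla|\nabla c|^\ell$-dissipation of the $c$-equation. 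Combining these three estimates, together with the fluid-energy inequality $\tfrac{d}{dt}\|\mathbf u\|_{L^2}^2+\|\nabla\mathbf u\|_{L^2}^2\le C\|n\|_{L^2}\|\mathbf u\|_{L^2}$, yields a scalar ODI
$$Y'+c_1 Y\le c_2,\qquad Y(t):=\|n(\cdot,t)\|_{L^2}^2+\|\nabla c(\cdot,t)\|_{L^{2\ell}}^{2\ell}+\|\mathbf u(\cdot,t)\|_{L^2}^2,$$
whose constants depend only on $a_0,b_0,\alpha,\beta,\gamma,r,|\Omega|$. A further ODI comparison provides $t_2>t_1$ beyond which $\|n(\cdot,t)\|_{L^2}\le C$.

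\textbf{Stage 3 ($L^\infty$-bound on $\mathbf u$).} Fix $\theta\in(1/2,1)$ so that the two-dimensional embedding $D(\mathcal A^\theta)\hookrightarrow L^\infty(\Omega;\R^2)$ holds. Writing $\mathbf u$ by the Duhamel formula starting at $t_2$, applying $\mathcal A^\theta$, invoking Lemma~\ref{Asemigroup} together with the uniform $L^2$-bound on $n\nabla\Phi$ produced by Stage~2, and treating the convective term $\mathcal P(\mathbf u\cdot\nabla\mathbf u)$ via a short-time bootstrap based on its $L^2_t L^4_x$-control supplied by the preceding energy estimate, one obtains a uniform-in-time bound for $\|\mathcal A^\theta\mathbf u(t)\|_{L^2}$ beyond some $t_3>t_2$, whence the announced bound for $\|\mathbf u(t)\|_{L^\infty}$ via the embedding. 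The principal obstacle is Stage~2: the chemotactic cross-product has to be absorbed simultaneously by the $(n+1)^\alpha|\nabla n|^2$-dissipation and by the super-linear sink $\mu n^{2+\gamma}$, with the algebraic room precisely provided by $\beta\le\gamma$; the two-dimensional Gagliardo-Nirenberg inequality is indispensable for closing the $\|\nabla c\|_{L^{2\ell}}$-loop against the $n$-dissipation, and careful book-keeping is needed to ensure that $c_1,c_2$ do not deteriorate with the initial data.
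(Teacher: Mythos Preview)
Your overall architecture---$L^1$ bound, then energy-type $L^2$ control, then Duhamel for $\mathbf u$---is reasonable, but Stage~2 contains a genuine gap, and the route you take differs substantially from the paper's.

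\textbf{The gap.} You assert that after testing \eqref{example2}$_2$ with a Mizoguchi--Souplet quantity $|\nabla c|^{2(\ell-1)}$ ``the transport integrals all vanish thanks to $\nabla\cdot\mathbf u=0$''. This is false. Taking the gradient of the $c$-equation produces
\[
\nabla(\mathbf u\cdot\nabla c)=(\nabla\mathbf u)^{T}\nabla c+(\mathbf u\cdot\nabla)\nabla c.
\]
Only the second summand integrates to zero against $|\nabla c|^{2(\ell-1)}$; the first leaves you with a term bounded by $\int_\Omega|\nabla c|^{2\ell}|\nabla\mathbf u|$, which couples your $\nabla c$-functional to $\nabla\mathbf u$ and cannot be dropped. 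Since your fluid estimate at this stage only delivers $\|\mathbf u\|_{L^2}$ (and a time-integral of $\|\nabla\mathbf u\|_{L^2}^2$), you do not have pointwise-in-time control of $\|\nabla\mathbf u\|_{L^2}$, and closing the coupled ODI $Y'+c_1Y\le c_2$ with initial-data-independent $c_1,c_2$ is not justified as written. A Gr\"onwall argument with $\|\nabla\mathbf u\|_{L^2}^2$ as integrable coefficient would reintroduce dependence on the initial data through the energy of $\mathbf u$. There is also a secondary issue: your absorption of $\int(n+1)^{2\beta-\alpha}|\nabla c|^2$ into $\mu\int n^{2+\gamma}$ and $\|\nabla c\|_{L^{2\ell}}^{2\ell}$ via Young's inequality requires $(2\beta-\alpha)\tfrac{\ell}{\ell-1}\le 2+\gamma$, which fails for $\gamma>2$ when $\beta=\gamma$, $\alpha=0$; the Gagliardo--Nirenberg step you allude to would have to be spelled out carefully.

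\textbf{Comparison with the paper.} The paper avoids both difficulties by proceeding \emph{sequentially} rather than through a coupled ODI, and by never invoking the Mizoguchi--Souplet functional for $c$. After the $L^1$ bound it first obtains eventual pointwise-in-time bounds for $\|\mathbf u\|_{L^2}$, then $\|\nabla\mathbf u\|_{L^2}$ (this is where the smallness of $\mu^{-1}$ is first used, to absorb the Navier--Stokes nonlinearity), then $\|c\|_{W^{1,2}}$, and then $\|\mathbf u\|_{L^\infty}$ via the Stokes semigroup. Only \emph{after} $\|\mathbf u\|_{L^\infty}$ is in hand does it attack $\|n\|_{L^2}$: the chemotaxis cross-term is rewritten as $\int S(n)\nabla n\cdot\nabla c=-\int H_S(n)\Delta c$ with $H_S(n)=\int_0^n S$, and $\int|\Delta c|^{\gamma+2}$ is controlled by maximal $L^p$--$L^q$ regularity for the heat equation (the convection $\mathbf u\cdot\nabla c$ now being harmless thanks to the already-established $L^\infty$ bound on $\mathbf u$). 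The assumption $\beta\le\gamma$ enters through the growth of $H_S$, and the second use of $\mu>\mu_*$ is to absorb the resulting $\int n^{\gamma+2}$ coming from the maximal-regularity step. Finally, your placement of the threshold $\mu_*$ in Stage~1 is misplaced: the $L^1$ ODI yields an eventual bound for \emph{every} $\mu>0$; the largeness of $\mu$ is needed only later.
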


\begin{proof}
[\bf Proof]
The Poincar\'{e} inequality and Sobolev estimates tell that
\begin{align}\label{2Ap}
\int_{\Omega} |\nabla \psi|^2 \,dx\geq C_p \int_{\Omega}  \psi^2\,dx\qquad {\rm for~any~}\psi \in W^{1,2}_0(\Omega)
  \end{align}
and
\begin{align}\label{2As}
\int_{\Omega} |\Delta \psi|^2\,dx+\int_{\Omega} \psi^2\,dx
 \geq C_s \int_{\Omega}  |\nabla \psi|^2 \,dx
\qquad {\rm for~every~}\psi\in W^{2,2}(\Omega)\cap W_0^{1,2}(\Omega).
  \end{align}
We thus define a constant $\bar{C}:=\max\left\{\frac{C_p}{4}\,,\,\frac{ C_s}{2}\,,\, \lambda_1 \right\}$
with $\lambda_1>0$ specified as in Lemma \ref{Asemigroup}.

The equation \eqref{example2}$_1$ ensures that
  \begin{align}\label{2A1}
   \frac{\mathrm{d}}{\mathrm{d}t}\int_{\Omega} n\,dx= r \int_{\Omega} n \,dx-\mu \int_{\Omega} n^{1+\gamma} \,dx
  \leq r \int_{\Omega} n \,dx-\mu |\Omega|^{-\gamma}\left(\int_{\Omega} n \,dx \right)^{1+\gamma},
  \qquad\forall~ t>0.
  \end{align}
  According to the Bernoulli inequality \cite[Lemma 1.2.4]{CD}, there holds that
  \begin{align}\label{1}
\limsup_{t\rightarrow\infty} \int_{\Omega} n(\cdot,t) \,dx\le |\Omega|\Big(\frac{r}{\mu}\Big)^{\frac{1}{\gamma}}.
\end{align}
This gives a time point $t_1>0$ such that $\int_{\Omega} n(\cdot,t)\,dx<2|\Omega|\big(\frac{r}{\mu}\big)^{\frac{1}{\gamma}}$ for any $t>t_1$.
This along with \eqref{2A1} shows that
 \begin{align*}
   \frac{\mathrm{d}}{\mathrm{d}t}\left( e ^{\bar{C} t}\int_{\Omega} n\,d x\right)
   &=(r+\bar{C})e ^{\bar{C} t} \int_{\Omega} n\,dx-\mu e ^{\bar{C} t} \int_{\Omega} n^{1+\gamma}\,dx\\
   &\leq 2|\Omega|(r+\bar{C})e ^{\bar{C} t}\Big(\frac{r}{\mu}\Big)^{\frac{1}{\gamma}}
   -\mu e ^{\bar{C} t} \int_{\Omega} n^{1+\gamma}\,dx,\qquad \forall~t>t_1.
  \end{align*}
Upon integration over $(t_1,t)$, the above ODI yields that
\begin{align}\label{2A1'}
  \mu \int_{t_1}^t  e ^{\bar{C} (s-t)} \int_{\Omega} n^{1+\gamma} \,dxds
  \le 4|\Omega|\frac{\bar{C}+r}{\bar{C}}\Big(\frac{r}{\mu}\Big)^{\frac{1}{\gamma}},\qquad \forall~t>t_1,
  \end{align}
which immediately implies that
\begin{align}\label{n2}
    \int_{t_1}^t  e ^{\bar{C} (s-t)} \int_{\Omega} n^{2} \,dxds
  & \le  \int_{t_1}^t  e ^{\bar{C} (s-t)} \int_{\Omega} n^{1+\gamma} \,dxds+\frac{1}{\bar{C}}\nonumber\\
  &\le  4|\Omega|\frac{\bar{C}+r}{\bar{C}\mu}\Big(\frac{r}{\mu}\Big)^{\frac{1}{\gamma}}
  +\frac{1}{\bar{C}},\qquad \forall~t>t_1.
  \end{align}
From \eqref{example2}$_3$ and Young's inequality, we obtain
\begin{align*}
   \frac{1}{2}\frac{\mathrm{d}}{\mathrm{d}t}\int_{\Omega} \mathbf{u}^2\,dx+\int_{\Omega} |\nabla \mathbf{u}|^2\,dx
   \le  \int_{\Omega} n \mathbf{u}\,dx\le  \frac{1}{2 C_p}\int_{\Omega} n^2 \,dx
   +  \frac{C_p}{2}\int_{\Omega}  \mathbf{u}^2\,dx,\qquad \forall~t>t_1.
  \end{align*}
Thus, it follows by \eqref{2Ap} that
     \begin{align*}
     \frac{\mathrm{d}}{\mathrm{d}t}\left( e ^{   \frac{C_p t}{4}  }\int_{\Omega} \mathbf{u}^2\,dx\right)
     +\frac{ e^{   \frac{C_p t}{4}  } }{  4  }\int_{\Omega} |\nabla \mathbf{u}|^2 \,dx
   \le \frac{ e^{   \frac{C_p t}{4}  }  }{C_p} \int_{\Omega} n^2 \,dx,\qquad \forall~t>t_1.
  \end{align*}
In view of \eqref{n2}, one can find $t_2>t_1$ such that
\begin{align}\label{2A4}
\|\mathbf{u}(\cdot,t)\|^2_{ L^2(\Omega)}\le \frac{ C_1}{\mu^{\frac{1}{\gamma}+1}},
  \qquad  \int_{t_1}^t  e ^{ \frac{s-t}{2} } \int_{\Omega} |\nabla \mathbf{u}|^2\,dxds
  \le\frac{ C_1}{\mu^{\frac{1}{\gamma}+1}},\qquad \forall~t>t_2.
  \end{align}
with $C_1=C_1(r,\gamma,\Omega)>0$.
  Now we test the equation \eqref{example2}$_3$ with $\Delta \mathbf{u}$ and integrate by parts to get
  \begin{align}\label{2A5}
   \frac{1}{2}\frac{\mathrm{d}}{\mathrm{d}t}\int_{\Omega} |\nabla \mathbf{u}|^2\,d x+\int_{\Omega} |\Delta\mathbf{u}|^2\,dx
   &\le  \int_{\Omega} n \Delta\mathbf{u}\,dx+ \int_{\Omega} \mathbf{u}\cdot \nabla \mathbf{u} \Delta\mathbf{u}\,dx \nonumber\\
   & \le  \int_{\Omega} n^2\,dx
   +  \frac{1}{2}\int_{\Omega}   |\Delta\mathbf{u}|^2\,dx
   +\int_{\Omega}   |\mathbf{u}\cdot \nabla \mathbf{u}|^2\,dx,\qquad\forall~ t>0,
  \end{align}
  where H\"{o}lder's inequality, the Gagliardo-Nirenberg inequality and Young's inequality entail that
  \begin{align}\label{2A6}
  \int_{\Omega}   |\mathbf{u}\cdot \nabla \mathbf{u}|^2 \,dx
  \le \|\mathbf{u}\|^2_{L^4(\Omega)} \|\nabla\mathbf{u}\|_{L^4(\Omega)} ^2
&  \le C_2\Big( \|\nabla\mathbf{u}\|_{L^2(\Omega)}^{\frac{1}{2}} \|\mathbf{u}\|_{L^2(\Omega)}^{\frac{1}{2}}  \Big)^2
  \Big(\|\Delta\mathbf{u}\|_{L^2(\Omega)}^{\frac{1}{2}} \|\nabla\mathbf{u}\|_{L^2(\Omega)}^{\frac{1}{2}}  \Big)^2\nonumber\\
&  \le  \frac{1}{4}\|\Delta\mathbf{u}\|^2_{L^2(\Omega)}+ C^2_2\|\mathbf{u}\|^2_{L^2(\Omega)}\|\nabla\mathbf{u}\|^4_{L^2(\Omega)}
  \end{align}
  and
\begin{align}\label{2A7}
\|\nabla\mathbf{u}\|^4_{L^2(\Omega)} \le C_3\Big(\|\Delta\mathbf{u}\|^2_{L^2(\Omega)}  \|\mathbf{u}\|^2_{L^2(\Omega)}
+\|\mathbf{u}\|^4_{L^2(\Omega)} \Big)
  \end{align}
  with $C_2,C_3>0$. It can be obtained by a combination of \eqref{2A5}--\eqref{2A7},
   Young's inequality as well as \eqref{2A4} that
  \begin{align*}
   \frac{\mathrm{d}}{\mathrm{d}t}\int_{\Omega} |\nabla \mathbf{u}|^2\,dx
   +\frac{1}{2}\int_{\Omega} |\Delta\mathbf{u}|^2\,dx
 &   \le  C_4\Big(\|\Delta\mathbf{u}\|^2_{L^2(\Omega)}  \|\mathbf{u}\|^4_{L^2(\Omega)}
+\|\mathbf{u}\|^6_{L^2(\Omega)} \Big)
+2\int_{\Omega} n^2 \,dx \\
&\le\frac{ C_4 C_1^2}{\mu^{\frac{2}{\gamma}+2}}\|\Delta\mathbf{u}\|^2_{L^2(\Omega)}
+\frac{ C_4 C_1^3}{\mu^{\frac{3}{\gamma}+3}}
+2\int_{\Omega} n^2\,dx,\qquad \forall~t>t_2
  \end{align*}
with $C_4=2C_2^2C_3$. We let $\mu_*>1$ be taken large enough satisfying
  $$\frac{ C_4 C_1^2}{\mu_*^{\frac{2}{\gamma}+2}}\le \frac{1}{4}.$$
   Therefore, the assumption $\mu>\mu_*$ and \eqref{2As} imply that
     \begin{align*}
   \frac{\mathrm{d}}{\mathrm{d}t}\left( e ^{ \frac{C_s t}{2}  }\int_{\Omega} | \nabla \mathbf{u}|^2 \,dx\right)
   \le  2e^{ \frac{C_s t}{2}} \left(\int_{\Omega} n^2\mathrm{d}x
+\frac{ C_4 C_1^3}{\mu^{\frac{3}{\gamma}+3}}
+\frac{ C_1}{\mu^{\frac{1}{\gamma}+1}}\right),\qquad \forall~t>t_2.
  \end{align*}
   Using an ODI argument, we infer from the above display and \eqref{n2} that there is $t_3>t_2$ ensuring
\begin{align}\label{2A8}
\|  \nabla \mathbf{u} (\cdot,t) \|^2_{ L^2(\Omega)}\le
2 C_4 C_1^3 r^{\frac{3}{\gamma}}+2C_1+
+8 r^{\frac{1}{\gamma}}|\Omega|\frac{\bar{C}+r}{\bar{C}\mu}
  +\frac{2}{\bar{C}},\qquad \forall~t>t_3.
\end{align}
Similar reasonings as leading to \eqref{2A4} and \eqref{2A8} allow us to derive
\begin{align}\label{2A8'}
\| c(\cdot,t) \|^2_{ W^{1,2}(\Omega)}\le C_5,\qquad \forall~t>t_4
\end{align}
  with some $t_4>t_3$ and $C_5=C_5(r,\gamma,\Omega)>0$.

In view of the variation-of-constants formula according to \eqref{example2}$_2$, there holds
\begin{align}\label{cwaneqk}
\big\|\mathcal{A}^{  \frac{1}{2} }  \mathbf{u}(\cdot, t)\big\|_{L^{1+\gamma}(\Omega)}
\leq &
\big\|\mathcal{A}^{  \frac{1}{2} }  e^{-(t-t_2)\mathcal{A}} \mathbf{u}(\cdot,t_3)\big\|_{L^{1+\gamma}(\Omega)}
+\int_{t_3}^{t} \big\|\mathcal{A}^{  \frac{1}{2} }  e^{-(t-s)\mathcal{A}} \mathbf{u}\cdot\nabla \mathbf{u}(\cdot, s) \big\|_{L^{1+\gamma}(\Omega)} \,ds
\nonumber\\
&+\int_{t_3}^{t}\big\|\mathcal{A}^{  \frac{1}{2} }  e^{-(t-s)\mathcal{A}} n \big\|_{L^{1+\gamma }(\Omega)}\,ds,\qquad\forall~t >t_3.
\end{align}
We apply Lemma \ref{lem3.1} (i), H\"{o}lder's inequality and the Sobolev embedding to get
\begin{align*}
\big\|\mathcal{A}^{  \frac{1}{2} }  e^{-t\mathcal{A}}\mathbf{u}(\cdot,t_3)\big\|_{L^{ 1+\gamma}(\Omega)}
 \le  C_5 e^{-\lambda_{1} (t-t_2)}
\big\|\mathcal{A}^{  \frac{1}{2} } \mathbf{u}(\cdot,t_2)\big\|_{L^{ 1+\gamma}(\Omega)},
\qquad\forall~ t>t_3
\end{align*}
and
\begin{align*}
&\quad\int_{t_3}^{t}\big\|\mathcal{A}^{  \frac{1}{2} }  e^{-(t-s)\mathcal{A}}\mathbf{u}(\cdot, s)
\cdot \nabla \mathbf{u}(\cdot, s)\big\|_{L^{1+\gamma }(\Omega)} \,ds \nonumber \\
&\le
C_6\int_{t_3}^{t}
(t-s)^{-1+\frac{1}{2(1+\gamma) } } e^{-\lambda_{1}(t-s)}
\| \mathbf{u}(\cdot, s) \cdot \nabla \mathbf{u}(\cdot, s) \|_{L^{   \frac{2+2\gamma}{2+\gamma} }(\Omega)} \,ds \nonumber \\
&\le
C_7\int_{t_3}^{t}  (t-s)^{  -1+\frac{1}{2(1+\gamma) }   } e^{-\lambda_{1}(t-s)}
\|\mathbf{u}(\cdot, s)\|_{L^{ 2(1+\gamma)  }(\Omega)} \|\nabla \mathbf{u}(\cdot, s) \|_{L^{ 2 }(\Omega)} \,ds
\nonumber \\
&\le C_8
\int_{t_3}^{t}  (t-s)^{ -1+\frac{1}{2(1+\gamma) }  }
e^{-\lambda_{1}(t-s)}\|\nabla\mathbf{u}(\cdot, s)\|^2_{L^{2}(\Omega)} \,ds,
\qquad\forall~t >t_3
\end{align*}
and
\begin{align*}
&\quad\int_{t_3}^{t}\big\|\mathcal{A}^{  \frac{1}{2} }  e^{-(t-s)\mathcal{A}} n(\cdot, s)\big\|_{L^{1+\gamma}(\Omega)}\,ds \nonumber \\
&\le
C_9\int_{t_3}^{t}
(t-s)^{-\frac{1}{2} } e^{-\lambda_{1}(t-s)}\| n(\cdot, s) \|_{L^{ 1+\gamma  }(\Omega)}\,ds \nonumber \\
&\le C_9\int_{t_3}^{t} e^{-\lambda_{1}(t-s)}
\| n(\cdot, s) \|^{1+\gamma}_{L^{ 1+\gamma  }(\Omega)}\,ds
+C_9\int_{t_3}^{t}(t-s)^{-\frac{1+\gamma}{2\gamma}} e^{-\lambda_{1}(t-s)}\,ds,\qquad\forall~t >t_3,
\end{align*}
where the positive constants $C_5,C_6,C_7,C_8,C_9$ depend only on $\gamma$ and $\Omega$.
Summarizing the above estimates and recalling \eqref{2A8}, \eqref{2A1'} deduce the existence of $t_5>t_3$ such that
\begin{align*}
\|\mathcal{A}^{  \frac{1}{2} } \mathbf{u}(\cdot, t)\|_{L^{ 1+\gamma }(\Omega)}
\le C_{10},
\qquad\forall~t >t_5,
\end{align*}
and hence,
\begin{align}\label{2A9}
\|\mathbf{u}(\cdot, t)\|_{L^{ \infty }(\Omega)}\le C_{11},
\qquad\forall~t >t_5
\end{align}
due to an application of the Sobolev embedding inequality.

 By utilizing \eqref{example2}$_1$ and noticing \eqref{cDS}, we obtain for any $t>t_6:=\max\{t_4,t_5\}$,
   \begin{align*}
   \frac{1}{2}\frac{\mathrm{d}}{\mathrm{d}t}\int_{\Omega} n^2 \,dx+\int_{\Omega}D(n)|\nabla n|^2\,dx
   &=-
    \int_{\Omega}S(n)\nabla n \cdot\nabla c \,dx+ r \int_{\Omega} n^{2}\,dx-\mu \int_{\Omega} n^{\gamma+2}\,dx\\
     &=
    \int_{\Omega}H_s(n)\cdot\Delta c\,dx + r \int_{\Omega} n^{2}\,dx -\mu \int_{\Omega} n^{\gamma+2}\,dx\\
    &\le \int_{\Omega}H_s(n)^{ \frac{\gamma+2}{\gamma+1}  }\,dx
    +\int_{\Omega} |\Delta c|^{\gamma+2}\,dx
+ r \int_{\Omega} n^{2}\,dx -\mu \int_{\Omega} n^{\gamma+2}\,dx
  \end{align*}
  with
 \begin{align*}
 H_s(n):=\int_0^n S(\sigma) \,d\sigma&\le C_{12}n^{(\beta-1)_++2}+C_{12}\\
 &\le C_{12}n^{\gamma+1}+2C_{12},
 \end{align*}
 where we utilized the assumption $\beta\le \gamma$ and $C_{12}$ depends only on $b_0$, $r$ and $\Omega$.
  A simple calculation implies
$$(2+\gamma+2 r)\int_{\Omega}n^2\,dx\leq \mu\int_{\Omega} n^{\gamma+2}\,dx
+ \frac{ \big(2+\gamma+2 r\big)^{\frac{2}{\gamma}+1}  |\Omega|}{ \mu^{\frac{2}{\gamma}} }.$$
Combining the above estimates, one has
    \begin{align*}
\frac{\mathrm{d}}{\mathrm{d}t}\int_{\Omega} n^2\,dx+(2+\gamma)\int_{\Omega}n^2\,dx
 &\le 2C_{12}^{ \frac{\gamma+2}{\gamma+1}  } \int_{\Omega}n^{ \gamma+2  }\,dx
+\int_{\Omega} |\Delta c|^{\gamma+2}\,dx
-\mu \int_{\Omega} n^{\gamma+2}\,dx\\
&\quad+\frac{ \big(2+\gamma+2 r\big)^{\frac{2}{\gamma}+1}  |\Omega|}{ \mu^{\frac{2}{\gamma}} }
+4C_{12}^{ \frac{\gamma+2}{\gamma+1}  }, \qquad\forall~t >t_6,
  \end{align*}
  namely,
\begin{align}\label{A210}
   \frac{\mathrm{d}}{\mathrm{d}t}\Big(e^{ (\gamma+2)s}\int_{\Omega} n^2 \,dx\Big)
 &  \leq
2C_{12}^{ \frac{\gamma+2}{\gamma+1}  }e^{ (\gamma+2)s}
\int_{\Omega}n^{ \gamma+2  }\,dx
+e^{ (\gamma+2)s}\int_{\Omega} |\Delta c|^{\gamma+2}\,dx
-\mu e^{ (\gamma+2)s}\int_{\Omega} n^{\gamma+2}\,dx\nonumber\\
&\quad+e^{ (\gamma+2)s}\frac{ \big(2+\gamma+2 r\big)^{\frac{2}{\gamma}+1}  |\Omega|}{ \mu^{\frac{2}{\gamma}} }
+4C_{12}^{ \frac{\gamma+2}{\gamma+1}  }e^{ (\gamma+2)s}, \qquad\forall~t >t_6.
  \end{align}
The maximal $L^p$--$L^q$ estimates \cite{Hieber} along with the Gagliardo-Nirenberg inequality and the 
Sobolev estimates indicate that
\begin{align*}
\int_{t_6}^t \int_{\Omega} e^{ (\gamma+2)s}|\Delta c|^{\gamma+2} \,dxd s
 &\leq C_{13}\int_{t_6}^t\int_{\Omega}e^{ (\gamma+2)s} |\mathbf{u}\cdot \nabla c|^{\gamma+2}\,dxds
 +C_{13} \int_{t_6}^t \int_{\Omega} e^{ (\gamma+2)s} n^{\gamma+2}\,dxds\\
&\quad +C_{13}e^{ (\gamma+2)t_6}\int_{\Omega}|\Delta c|^{\gamma+2}(x,t_6) \,dx \\
 &\leq C_{13}\|\mathbf{u}\|^{\gamma+2}_{L^{ \infty}(\Omega\times(t_6,t))}
 \int_{t_6}^te^{ (\gamma+2)s} \| \nabla c\|_{L^{ \gamma +2 }(\Omega)}^{\gamma+2}\,ds
 + C_{13}\int_{t_6}^t \int_{\Omega} e^{ (\gamma+2)s} n^{\gamma+2}\,dxds\\
&\quad +C_{13}e^{ (\gamma+2)t_6}\int_{\Omega}|\Delta c|^{\gamma+2}(x,t_6) \,dx\\
  &\leq C_{14}\|\mathbf{u}\|^{\gamma+2}_{L^{ \infty}(\Omega\times(t_6,t))}\int_{t_6}^t  e^{ (\gamma+2)s}
 \Big( \| \Delta c\|_{L^{\gamma+ 2 }(\Omega)}^{\frac{\gamma+2}{2}} \|c\|_{L^{\gamma+ 2}(\Omega)}^{\frac{\gamma+2}{2}}
 + \|c\|_{L^{\gamma+ 2}(\Omega)}^{\gamma+2  }\Big)\,ds\\
  &\quad+  C_{13}\int_{t_6}^t \int_{\Omega} e^{ (\gamma+2)s} n^{\gamma+2}\,dxds
  +C_{13}e^{ (\gamma+2)t_6}\int_{\Omega}|\Delta c|^{\gamma+2}(x,t_6) \,dx,\qquad\forall~t >t_6
\end{align*}
with $C_{13}=C_{13}(\gamma,\Omega)>0$ and $C_{14}=C_{14}(\gamma,\Omega)>0$.
Thus, by virtue of \eqref{2A9} and \eqref{2A8'}, we employ Young's inequality to the above display
and obtain
\begin{align}\label{A211}
 \int_{t_6}^t \int_{\Omega} e^{ (\gamma+2)s}|\Delta c|^{\gamma+2} \,dxds
& \leq C_{15}\int_{t_6}^t \int_{\Omega} e^{ (\gamma+2)s} n^{\gamma+2}\,dxd s 
+ C_{15} e^{ (\gamma+2)t_6}\int_{\Omega}|\Delta c|^{\gamma+2}(x,t_6) \,dx\nonumber\\
&\quad + C_{15} e^{ (\gamma+2)t},\qquad \forall~t>t_6.
\end{align}
Integrating \eqref{A210} over $(t_6,t)$ and using \eqref{A211} directly shows that
\begin{align}\label{530a4}
e^{ (\gamma+2)t}\int_{\Omega} n^2(x,t) \,dx
 &  \leq
\left(2C_{12}^{ \frac{\gamma+2}{\gamma+1}  }+ C_{15}\right)\int_{t_6}^t \int_{\Omega} e^{ (\gamma+2)s} n^{\gamma+2}\,dxd s
-\mu \int_{t_6}^t \int_{\Omega} e^{ (\gamma+2)s} n^{\gamma+2}\,dxd s\nonumber\\
&\quad+e^{ (\gamma+2)t_6}\int_{\Omega} n^2(x,t_6) \,dx
+ C_{15} e^{ (\gamma+2)t_6}\int_{\Omega}|\Delta c|^{\gamma+2}(x,t_6) \,dx\nonumber\\
&\quad+\left(\frac{ \big(2+\gamma+2 r\big)^{\frac{2}{\gamma}+1}  |\Omega|}{ \mu^{\frac{2}{\gamma}} }
+4C_{12}^{ \frac{\gamma+2}{\gamma+1}  }+C_{15}\right)e^{ (\gamma+2)t},\qquad \forall~t>t_6.
  \end{align}
We further enlarge $\mu_*>2C_{12}^{ \frac{\gamma+2}{\gamma+1}  }+ C_{15}$ (if necessary) and let $\mu\ge\mu_*$ to conclude that there is $t_7>t_6$ fulfilling
\begin{align*}
\int_{\Omega} n^2(x,t)\,dx
\leq
\frac{ \big(2+\gamma+2 r\big)^{\frac{2}{\gamma}+1}  |\Omega|}{ \mu_*^{\frac{2}{\gamma}} }
+4C_{12}^{ \frac{\gamma+2}{\gamma+1}  }+2C_{15}+1,\qquad \forall~t>t_7,
\end{align*}
where all the factors appearing on the right-hand side depend only on 
$\gamma$, $\beta$, $r$, $b_0$ and $\Omega$.
This ends our proof.
\end{proof}

\subsection*{Conflict of interest} The authors declare that there is no conflict of interest. We also declare that this
manuscript has no associated data.

\subsection*{Data availability} Data sharing is not applicable to this article as no datasets were generated or analysed
during the current study.

 \end{document}